\numberwithin{equation}{section}
\theoremstyle{plain}
\newtheorem{theorem}{Theorem}[section]
\newtheorem{lemma}[theorem]{Lemma}
\newtheorem{corollary}[theorem]{Corollary}
\theoremstyle{remark}
\newtheorem{remark}[theorem]{Remark}
\newtheorem{example}[theorem]{Example}
\newcommand{\cl}[1]{\ensuremath{\mathcal{#1}}}
\newcommand{\vect}[1]{\ensuremath{\mathbf{#1}}}
\newcommand{\mtrx}[1]{\ensuremath{\mathbf{#1}}}
\newcommand{\card}[1]{\ensuremath{\lvert{#1}\rvert}}
\DeclareMathOperator{\Pol}{Pol}
\DeclareMathOperator{\Inv}{Inv}
\newcommand{\nat}{\ensuremath{\omega}}
\begin{document}

\title[Galois connection for closed sets of operations]{Galois connection for sets of operations closed under permutation, cylindrification and composition}

\author{Miguel Couceiro}
\address[M. Couceiro]{Mathematics Research Unit \\
University of Luxembourg \\
6, rue Richard Coudenhove-Kalergi \\
L-1359 Luxembourg \\
Luxembourg}
\email{miguel.couceiro@uni.lu}

\author{Erkko Lehtonen}
\address[E. Lehtonen]{Computer Science and Communications Research Unit \\
University of Luxembourg \\
6, rue Richard Coudenhove-Kalergi \\
L-1359 Luxembourg \\
Luxembourg}
\email{erkko.lehtonen@uni.lu}



\begin{abstract}
We consider sets of operations on a set $A$ that are closed under permutation of variables, addition of dummy variables and composition. We describe these closed sets in terms of a Galois connection between operations and systems of pointed multisets, and we also describe the closed sets of the dual objects by means of necessary and sufficient closure conditions. Moreover, we show that the corresponding closure systems are uncountable for every $A$ with at least two elements.
\end{abstract}

\maketitle


\section{Preliminaries}
\label{sec:preli}

Throughout this paper, let $A$ be an arbitrary nonempty set. An \emph{operation} on $A$ is a map $f \colon A^n \to A$ for some integer $n \geq 1$, called the \emph{arity} of $f$. For $n \geq 1$, the set of all $n$-ary operations on $A$ is denoted by $\cl{O}_A^{(n)}$, and the set of all operations on $A$ is denoted by $\cl{O}_A := \bigcup_{n \geq 1} \mathcal{O}_A^{(n)}$. For a subset $\cl{F} \subseteq \cl{O}_A$ and an integer $n \geq 1$, the \emph{$n$-ary part} of $\cl{F}$ is defined as $\cl{F}^{(n)} := \cl{F} \cap \cl{O}_A^{(n)}$. For each $n \geq 1$ and $1 \leq i \leq n$, the $n$-ary operation $e_i^{n,A}$ on $A$ defined by $(a_1, \dots, a_n) \mapsto a_i$ is called the $i$-th $n$-ary \emph{projection} on $A$. We denote the set of all projections on $A$ by $\cl{E}_A := \{e_i^{n,A} \mid 1 \leq i \leq n\}$.

We denote the set of natural numbers by $\nat := \{0, 1, 2, \dotsc\}$, and we regard its elements as ordinals, i.e., $n \in \nat$ is the set of lesser ordinals $\{0, 1, \dotsc, n - 1\}$. Thus, an $n$-tuple $\vect{a} \in A^n$ is formally a map $\vect{a} \colon \{0, 1, \dotsc, n - 1\} \to A$. The notation $(a_i \mid i \in n)$ means the $n$-tuple mapping $i$ to $a_i$ for each $i \in n$. The notation $(a_1, \dotsc, a_n)$ means the $n$-tuple mapping $i$ to $a_{i+1}$ for each $i \in n$.

We view an $m \times n$ matrix $\mtrx{M} \in A^{m \times n}$ with entries in $A$ as an $n$-tuple of $m$-tuples $\mtrx{M} := (\vect{a}^1, \dotsc, \vect{a}^n)$. The $m$-tuples $\vect{a}^1, \dotsc, \vect{a}^n$ are called the \emph{columns} of $\mtrx{M}$. For $i \in m$, the $n$-tuple $\bigl( \vect{a}^1(i), \dotsc, \vect{a}^n(i) \bigr)$ is called \emph{row} $i$ of $\mtrx{M}$. If for $1 \leq i \leq p$, $\mtrx{M}_i := (\vect{a}^i_1, \dotsc, \vect{a}^i_{n_i})$ is an $m \times n_i$ matrix, then we denote by $[\mtrx{M}_1 | \mtrx{M}_2 | \dotsb | \mtrx{M}_p]$ the $m \times \sum_{i = 1}^p n_i$ matrix $(\vect{a}^1_1, \dotsc, \vect{a}^1_{n_1}, \vect{a}^2_1, \dotsc, \vect{a}^2_{n_2}, \dotsc, \vect{a}^p_1, \dotsc, \vect{a}^p_{n_p})$. An \emph{empty matrix} has no columns and is denoted by $()$.

For a function $f \colon A^n \to B$ and a matrix $\mtrx{M} := (\vect{a}^1, \dotsc, \vect{a}^n) \in A^{m \times n}$, we denote by $f \mtrx{M}$ the $m$-tuple $\bigl( f (\vect{a}^1(i), \dotsc, \vect{a}^n(i)) \bigm| i \in m \bigr)$ in $B^m$, in other words, $f \mtrx{M}$ is the $m$-tuple obtained by applying $f$ to the rows of $\mtrx{M}$.

Mal'cev~\cite{Malcev} introduced the operations $\zeta$, $\tau$, $\Delta$, $\nabla$, $\ast$ on the set $\cl{O}_A$ of all operations on $A$, defined as follows for arbitrary $f \in \cl{O}_A^{(n)}$, $g \in \cl{O}_A^{(m)}$:
\begin{align*}
(\zeta f)(x_1, x_2, \dotsc, x_n) &:= f(x_2, x_3, \dotsc, x_n, x_1), \\
(\tau f)(x_1, x_2, \dotsc, x_n) &:= f(x_2, x_1, x_3, \dotsc, x_n), \\
(\Delta f)(x_1, x_2, \dotsc, x_{n-1}) &:= f(x_1, x_1, x_2, \dotsc, x_{n-1})
\end{align*}
for $n > 1$, $\zeta f = \tau f = \Delta f := f$ for $n = 1$, and
\begin{align*}
(\nabla f)(x_1, x_2, \dotsc, x_{n+1}) &:= f(x_2, \dotsc, x_{n+1}), \\
(f \ast g)(x_1, x_2, \dotsc, x_{m+n-1}) &:= f \bigl( g(x_1, x_2, \dotsc, x_m), x_{m+1}, \dotsc, x_{m+n-1} \bigr).
\end{align*}
The operations $\zeta$ and $\tau$ are collectively referred to as \emph{permutation of variables,} $\Delta$ is called \emph{identification of variables} (also known as \emph{diagonalization}), $\nabla$ is called \emph{addition of a dummy variable} (or \emph{cylindrification}), and $\ast$ is called \emph{composition.} The algebra $(\cl{O}_A; \zeta, \tau, \Delta, \nabla, \ast)$ of type $(1, 1, 1, 1, 2)$ is called the \emph{full iterative algebra} on $A$, and its subalgebras are called \emph{iterative algebras} on $A$. A subset $\cl{F} \subseteq \cl{O}_A$ is called a \emph{clone} on $A$, if it is the universe of an iterative algebra on $A$ that contains all projections $e_i^{n,A}$, $1 \leq i \leq n$.

A \emph{Galois connection} between sets $A$ and $B$ is a pair $(\sigma, \pi)$ of mappings $\sigma \colon \mathcal{P}(A) \to \mathcal{P}(B)$ and $\pi \colon \mathcal{P}(B) \to \mathcal{P}(A)$ between the power sets $\mathcal{P}(A)$ and $\mathcal{P}(B)$ such that for all $X, X' \subseteq A$ and all $Y, Y' \subseteq B$ the following conditions are satisfied:
\begin{align*}
X \subseteq X' &\Longrightarrow \sigma(X) \supseteq \sigma(X'), \\
Y \subseteq Y' &\Longrightarrow \pi(Y) \supseteq \pi(Y'),
\end{align*}
and
\begin{align*}
X &\subseteq \pi(\sigma(X)), \\
Y &\subseteq \sigma(\pi(Y)),
\end{align*}
or, equivalently,
\[
X \subseteq \pi(Y) \Longleftrightarrow \sigma(X) \supseteq Y.
\]

Galois connections can be equivalently described as certain mappings induced by \emph{polarities,} i.e., relations $R \subseteq A \times B$, as the following well-known theorem shows (for early references, see~\cite{Everett,Ore}; see also~\cite{DEW,Lau}):
\begin{theorem}
\label{thm:Galois}
Let $A$ and $B$ be nonempty sets and let $R \subseteq A \times B$. Define the mappings $\sigma \colon \mathcal{P}(A) \to \mathcal{P}(B)$, $\pi \colon \mathcal{P}(B) \to \mathcal{P}(A)$ by
\begin{align*}
\sigma(X) &:= \{y \in B \mid \forall x \in X \colon (x, y) \in R\}, \\
\pi(Y) &:= \{x \in A \mid \forall y \in Y \colon (x, y) \in R\}.
\end{align*}
Then the pair $(\sigma, \pi)$ is a Galois connection between $A$ and $B$.
\end{theorem}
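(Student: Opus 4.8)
The plan is to verify directly that the pair $(\sigma,\pi)$ meets the defining conditions of a Galois connection, and the cleanest route is to establish the single equivalent condition $X \subseteq \pi(Y) \iff \sigma(X) \supseteq Y$ by unwinding both sides to one and the same statement about $R$. On the one hand, $X \subseteq \pi(Y)$ says precisely that every $x \in X$ belongs to $\pi(Y)$, i.e.\ that $(x,y) \in R$ for all $x \in X$ and all $y \in Y$. On the other hand, $\sigma(X) \supseteq Y$ says that every $y \in Y$ belongs to $\sigma(X)$, i.e.\ that $(x,y) \in R$ for all $y \in Y$ and all $x \in X$. Since the order of the two universal quantifiers is immaterial, these two conditions coincide, which yields the biconditional; by the remark immediately preceding the statement, this biconditional is equivalent to the four-clause definition, and the proof is complete.

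Should one prefer to stay closer to the elementary formulation, I would instead check the four clauses one at a time. For antitonicity of $\sigma$: assuming $X \subseteq X'$ and $y \in \sigma(X')$, the relation $(x,y) \in R$ holds for every $x \in X'$, hence for every $x \in X$, so $y \in \sigma(X)$; the argument for $\pi$ is symmetric. For extensivity: given $x \in X$, every $y \in \sigma(X)$ satisfies $(x',y) \in R$ for all $x' \in X$, in particular for $x' = x$, whence $x \in \pi(\sigma(X))$; thus $X \subseteq \pi(\sigma(X))$, and $Y \subseteq \sigma(\pi(Y))$ follows dually.

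I do not anticipate any genuine obstacle: the entire argument consists of expanding the definitions of $\sigma$ and $\pi$ and noting that swapping two universal quantifiers changes nothing. The only points deserving a moment's attention are the vacuous cases $X = \emptyset$ (where $\sigma(\emptyset) = B$) and $Y = \emptyset$ (where $\pi(\emptyset) = A$), which pose no difficulty since the quantified conditions then hold trivially, and the bookkeeping that shows the biconditional formulation is equivalent to the conjunction of the monotonicity and extensivity clauses — but this last point is exactly the content of the ``or, equivalently'' clause in the definition and can simply be cited.
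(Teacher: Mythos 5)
Your proof is correct. Note that the paper itself offers no proof of this statement: it is quoted as a well-known classical fact with references to Everett and Ore, so there is nothing to compare against. Your argument --- reducing everything to the single biconditional $X \subseteq \pi(Y) \Leftrightarrow \sigma(X) \supseteq Y$ and observing that both sides unwind to the same doubly universally quantified condition on $R$ --- is the standard verification, and your remarks on the vacuous cases $X = \emptyset$, $Y = \emptyset$ and on invoking the stated equivalence with the four-clause definition are exactly the right points to flag.
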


A prototypical example of a Galois connection is given by the $\Pol$--$\Inv$ theory of functions and relations. For $m \geq 1$, we denote
\[
\cl{R}_A^{(m)} := \{R \mid R \subseteq A^m\} = \mathcal{P}(A^m)
\]
and
\[
\cl{R}_A := \bigcup_{m \geq 1} \cl{R}_A^{(m)}.
\]
Let $R \in \cl{R}_A^{(m)}$. For a matrix $\mtrx{M} \in A^{m \times n}$, we write $\mtrx{M} \prec R$ to mean that the columns of $\mtrx{M}$ are $m$-tuples from the relation $R$. An operation $f \colon A^n \to A$ is said to \emph{preserve} $R$ (or $f$ is a \emph{polymorphism} of $R$, or $R$ is an \emph{invariant} of $f$), denoted $f \vartriangleright R$, if for all $m \times n$ matrices $\mtrx{M} \in A^{m \times n}$
\[
\mtrx{M} \prec R
\quad\text{implies}\quad
f \mtrx{M} \in R.
\]

For a relation $R \in \cl{R}_A$, we denote by $\Pol R$ the set of all operations $f \in \cl{O}_A$ that preserve the relation $R$. For a set $\cl{Q} \subseteq \cl{R}_A$ of relations, we let $\Pol \cl{Q} := \bigcap_{R \in \cl{Q}} \Pol R$. The sets $\Pol R$ and $\Pol \cl{Q}$ are called the sets of all \emph{polymorphisms} of $R$ and $\cl{Q}$, respectively. Similarly, for an operation $f \in \cl{O}_A$, we denote by $\Inv f$ the set of all relations $R \in \cl{R}_A$ that are preserved by $f$. For a set $\cl{F} \subseteq \cl{O}_A$ of functions, we let $\Inv \cl{F} := \bigcap_{f \in \cl{F}} \Inv f$. The sets $\Inv f$ and $\Inv \cl{F}$ are called the sets of all \emph{invariants} of $f$ and $\cl{F}$, respectively.

By Theorem~\ref{thm:Galois}, $(\Inv, \Pol)$ is the Galois connection induced by the relation $\vartriangleright$ between the set $\cl{O}_A$ of all operations on $A$ and the set $\cl{R}_A$ of all relations on $A$. It was shown by Geiger~\cite{Geiger} and independently by Bodnar\v{c}uk, Kalu\v{z}nin, Kotov and Romov~\cite{BKKR} that for finite sets $A$, the closed subsets of $\cl{O}_A$ under this Galois connection are exactly the clones on $A$. These authors also described the Galois closed subsets of $\cl{R}_A$ by defining an algebra on $\cl{R}_A$ and showing that the closed sets of relations are exactly the \emph{relational clones,} i.e., the subuniverses of the aforementioned algebra on $\cl{R}_A$.

\begin{theorem}[Geiger~\cite{Geiger}; Bodnar\v{c}uk, Kalu\v{z}nin, Kotov and Romov~\cite{BKKR}]
Let $A$ be a finite nonempty set.
\begin{enumerate}[(i)]
\item A set $\cl{F} \subseteq \cl{O}_A$ of operations is the set of polymorphisms of some set $\cl{Q} \subseteq \cl{R}_A$ of relations if and only if $\cl{F}$ is a clone on $A$.
\item A set $\cl{Q} \subseteq \cl{R}_A$ of relations is the set of invariants of some set $\cl{F} \subseteq \cl{O}_A$ of operations if and only if $\cl{Q}$ is a relational clone on $A$.
\end{enumerate}
\end{theorem}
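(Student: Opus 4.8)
Write $(\Inv,\Pol)$ for the Galois connection induced by $\vartriangleright$ via Theorem~\ref{thm:Galois}. The plan is, in each of (i) and (ii), to split the biconditional into an easy ``only if'' part (the Galois closed sets are clones, resp.\ relational clones), settled by routine verification, and a harder ``if'' part (clones, resp.\ relational clones, are already Galois closed). Both hard parts rest on the same device, the \emph{canonical relation} attached to a clone, which is available precisely because $A$ is finite.

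\textbf{Easy directions.} I would first check that $\Pol R$ is a clone for every $R\in\cl{R}_A$ and that $\Inv f$ is a relational clone for every $f\in\cl{O}_A$; each reduces to verifying that the generating operations $\zeta,\tau,\Delta,\nabla,\ast$ (resp.\ the operations of the Bodnar\v{c}uk--Kalu\v{z}nin--Kotov--Romov algebra on $\cl{R}_A$) respect $\vartriangleright$, together with the triviality that projections preserve every relation. Since clones and relational clones are each closed under (arbitrary, hence in particular finitely many, intersections — and on a finite $A$ every $\cl{R}_A^{(m)}$ is finite anyway), $\Pol\cl{Q}=\bigcap_{R\in\cl{Q}}\Pol R$ is a clone and $\Inv\cl{F}=\bigcap_{f\in\cl{F}}\Inv f$ is a relational clone, which gives ``only if'' in (i) and (ii). For the converse it remains to prove $\Pol\Inv\cl{F}\subseteq\cl{F}$ for every clone $\cl{F}$ and $\Inv\Pol\cl{Q}\subseteq\cl{Q}$ for every relational clone $\cl{Q}$, the reverse inclusions $\cl{F}\subseteq\Pol\Inv\cl{F}$ and $\cl{Q}\subseteq\Inv\Pol\cl{Q}$ being automatic for a Galois connection.

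\textbf{Part (i).} Let $\cl{F}$ be a clone and $g\in\cl{O}_A^{(n)}\setminus\cl{F}$; I must exhibit $R\in\Inv\cl{F}$ with $g\not\vartriangleright R$. Put $k:=\card{A}$, fix an enumeration $A^n=\{\vect{t}_1,\dots,\vect{t}_{k^n}\}$, let $\mtrx{M}\in A^{k^n\times n}$ be the matrix whose rows are $\vect{t}_1,\dots,\vect{t}_{k^n}$, with columns $\vect{c}_1,\dots,\vect{c}_n\in A^{k^n}$, and set $R:=\{\,f\mtrx{M}\mid f\in\cl{F}^{(n)}\,\}\in\cl{R}_A^{(k^n)}$. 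Then $R\in\Inv\cl{F}$: if $h\in\cl{F}^{(\ell)}$ and $\mtrx{P}\prec R$ has columns $f_1\mtrx{M},\dots,f_\ell\mtrx{M}$ with the $f_j\in\cl{F}^{(n)}$, then $h\mtrx{P}$ equals the superposition $h(f_1,\dots,f_\ell)$ applied to $\mtrx{M}$, hence lies in $R$ because $\cl{F}$ is closed under composition. But $g\not\vartriangleright R$: each column $\vect{c}_i=e_i^{n,A}\mtrx{M}$ lies in $R$ (projections belong to the clone $\cl{F}$), whereas $g\mtrx{M}\in R$ would give $g\mtrx{M}=f\mtrx{M}$ for some $f\in\cl{F}^{(n)}$, forcing $g$ and $f$ to agree on all $n$-tuples of $A$ (the rows of $\mtrx{M}$) and so $g=f\in\cl{F}$, a contradiction. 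Since $R\in\Inv\cl{F}$ and $g\notin\Pol R$, we conclude $g\notin\Pol\Inv\cl{F}$.

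\textbf{Part (ii) and the main obstacle.} Let $\cl{Q}$ be a relational clone; I would use that it is intersection-closed, contains every full relation $A^p$, and is closed under coordinate operations — concretely, for $Q\in\cl{Q}^{(p)}$ and a map $\phi\colon\{1,\dots,p\}\to\{1,\dots,q\}$ the relation $Q^{\phi}:=\{\vect{e}\in A^q\mid(e_{\phi(1)},\dots,e_{\phi(p)})\in Q\}$ lies in $\cl{Q}$, and for $\psi\colon\{1,\dots,m\}\to\{1,\dots,q\}$ and $Q'\in\cl{Q}^{(q)}$ the image $\{(b_{\psi(1)},\dots,b_{\psi(m)})\mid\vect{b}\in Q'\}$ lies in $\cl{Q}$ (both are primitive positive over $\cl{Q}$, hence obtainable from the defining operations). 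Let $S\in\Inv\Pol\cl{Q}$ be $m$-ary; the empty relation is an invariant of every operation and lies in every relational clone, so assume $S=\{\vect{s}^1,\dots,\vect{s}^t\}\neq\emptyset$. Invoking finiteness again, let $\mtrx{K}\in A^{k^t\times t}$ be the matrix whose rows enumerate $A^t$, with columns $\vect{c}_1,\dots,\vect{c}_t\in A^{k^t}$, and let $\widehat{R}\in\cl{Q}^{(k^t)}$ be the least relation of $\cl{Q}$ containing $\vect{c}_1,\dots,\vect{c}_t$ (it exists since $A^{k^t}\in\cl{Q}$ and $\cl{Q}$ is intersection-closed). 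The crux is the identity
\[
\widehat{R}=\{\,f\mtrx{K}\mid f\in(\Pol\cl{Q})^{(t)}\,\},
\]
whose inclusion ``$\supseteq$'' holds because $\widehat{R}\in\Inv\Pol\cl{Q}$ and $\vect{c}_i\in\widehat{R}$, and whose inclusion ``$\subseteq$'' follows by taking, for $\vect{b}\in\widehat{R}$, the unique $f\colon A^t\to A$ with $f\mtrx{K}=\vect{b}$ (well defined since the rows of $\mtrx{K}$ list every $t$-tuple) and checking $f\vartriangleright Q$ for all $Q\in\cl{Q}^{(p)}$: any $\mtrx{P}\prec Q$ has its rows among those of $\mtrx{K}$, giving a map $\phi$ with $f\mtrx{P}=(b_{\phi(1)},\dots,b_{\phi(p)})$ and $\vect{c}_j\in Q^{\phi}$ for all $j$, whence $\widehat{R}\subseteq Q^{\phi}$ and $f\mtrx{P}\in Q$. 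Finally, the rows of $\mtrx{N}:=(\vect{s}^1,\dots,\vect{s}^t)\in A^{m\times t}$ are $t$-tuples, giving a map $\psi$ with $f\mtrx{N}=(b_{\psi(1)},\dots,b_{\psi(m)})$ for $\vect{b}=f\mtrx{K}$; so $\{\,f\mtrx{N}\mid f\in(\Pol\cl{Q})^{(t)}\,\}$ is the coordinate-map image of $\widehat{R}$ under $\psi$, hence belongs to $\cl{Q}$ — and it equals $S$, since $S$ is preserved by $\Pol\cl{Q}$ and contains each $e_i^{t,A}\mtrx{N}=\vect{s}^i$. Therefore $S\in\cl{Q}$. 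I expect the main obstacle to be exactly this last block — proving that the map $f$ reconstructed from a tuple $\vect{b}\in\widehat{R}$ genuinely preserves every relation of $\cl{Q}$ — which is where intersection-closure of $\cl{Q}$ (to give meaning to $\widehat{R}$), closure under the coordinate operations $Q\mapsto Q^{\phi}$, and, critically, the finiteness of $A$ (so that $\mtrx{K}$ and $\widehat{R}$ have finite arity $k^t$) all come together, and where the index bookkeeping via $\phi$ and $\psi$ must be carried out carefully.
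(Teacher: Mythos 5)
The paper does not actually prove this theorem: it is quoted as background and attributed to Geiger and to Bodnar\v{c}uk--Kalu\v{z}nin--Kotov--Romov, so there is no in-paper argument to compare yours against. Judged on its own, your proposal is a correct reconstruction of the standard proof. Part (i) is the usual canonical-relation argument: with $\mtrx{M}$ the $\card{A}^n \times n$ matrix whose rows list $A^n$, the relation $R := \{f\mtrx{M} \mid f \in \cl{F}^{(n)}\}$ is invariant under $\cl{F}$ because a clone is closed under general superposition $h(f_1,\dotsc,f_\ell)$ (since the paper defines clones via Mal'cev's $\zeta,\tau,\Delta,\nabla,\ast$ together with the projections, you should note that this general superposition is derivable from those operations), it contains the columns $e_i^{n,A}\mtrx{M}$ because $\cl{F}$ contains the projections, and it omits $g\mtrx{M}$ precisely because the rows of $\mtrx{M}$ exhaust $A^n$; this yields $\Pol\Inv\cl{F} \subseteq \cl{F}$. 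Part (ii) is the dual canonical construction, and your index bookkeeping is sound: the least member $\widehat{R}$ of $\cl{Q}$ containing the columns of $\mtrx{K}$ equals $\{f\mtrx{K} \mid f \in (\Pol\cl{Q})^{(t)}\}$, since for $\vect{b} \in \widehat{R}$ the operation $f$ with $f\mtrx{K} = \vect{b}$ preserves each $Q' \in \cl{Q}$ via minimality of $\widehat{R}$ inside $Q'^{\varphi}$, and then $S$ is the $\psi$-image of $\widehat{R}$, hence in $\cl{Q}$. Two definitional caveats, because the paper leaves ``relational clone'' unspecified (subuniverses of an unstated algebra on $\cl{R}_A$): your argument needs exactly that relational clones contain all full relations $A^p$, are closed under same-arity intersections (finite intersections suffice since $A$ is finite), under the coordinate substitutions $Q \mapsto Q^{\varphi}$, and under the existential-projection images along $\psi$; and it needs the empty relation to belong to every relational clone, a convention that varies between sources but is forced here, since every operation preserves $\emptyset$ vacuously and hence $\emptyset \in \Inv\Pol\cl{Q}$ always. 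With those closure properties tied explicitly to the definition in use, your proof is complete, and finiteness of $A$ enters exactly where you say it does, namely in giving the canonical objects the finite arities $\card{A}^n$ and $\card{A}^t$.
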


On arbitrary, possibly infinite sets $A$, the Galois closed sets of operations are the locally closed clones, as shown by Szabó~\cite{Szabo} and independently by Pöschel~\cite{Poschel}.  A set $\cl{F} \subseteq \cl{O}_A$ of operations is said to be \emph{locally closed,} if it holds that for all $f \in \cl{O}_A$, say of arity $n$, $f \in \cl{F}$ whenever for all finite subsets $F \subseteq A^n$, there exists a function $g \in \cl{F}^{(n)}$ such that $f|_F = g|_F$.

These results were generalized to iterative algebras (with or without projections) by Harnau~\cite{Harnau1-3} who defined a polarity between operations and relation pairs. An $m$-ary \emph{relation pair} on $A$ is a pair $(R, R')$ where $R, R' \in \cl{R}_A^{(m)}$ for some $m \geq 1$ and $R' \subseteq R$. For $m \geq 1$, denote
\[
\cl{H}_A^{(m)} := \{(R, R') \mid R' \subseteq R \subseteq A^m\}
\]
and
\[
\cl{H}_A = \bigcup_{m \geq 1} \cl{H}_A^{(m)}.
\]
An operation $f \in \cl{O}_A$ is said to \emph{preserve} a relation pair $(R, R') \in \cl{H}_A^{(m)}$, denoted $f \vartriangleright (R, R')$, if for all matrices $\mtrx{M} \in A^{m \times n}$, $\mtrx{M} \prec R$ implies $f \mtrx{M} \in R'$. In light of Theorem~\ref{thm:Galois}, the preservation relation $\vartriangleright$ induces a Galois connection between the sets $\cl{O}_A$ and $\cl{H}_A$. Harnau showed that the closed sets of operations are exactly the universes of iterative algebras. He defined certain operations on the set $\cl{H}_A$ of relation pairs and showed that the Galois closed subsets of relation pairs are precisely the subsets that are closed under these operations.

In~\cite{Lehtonen}, the subalgebras of the reduct $(\cl{O}_A; \zeta, \tau, \nabla, \ast)$ of the full iterative algebra containing all projections were completely characterized in terms of a preservation relation between operations and so-called clusters. In analogy with Harnau's approach to iterative algebras (i.e., considering all subalgebras of $(\cl{O}_A; \zeta, \tau, \Delta, \nabla, \ast)$ with or without projections and thus extending the $\Pol$--$\Inv$ theory of clones and relations), in this paper we relax the closure system to all subalgebras of $(\cl{O}_A; \zeta, \tau, \nabla, \ast)$, not necessarily containing all projections. This is achieved within a Galois framework where the dual objects are systems of pointed multisets. We will also describe the Galois closed sets of these dual objects in terms of explicit closure conditions. Furthermore, we will show that the respective closure systems are uncountable for all $\card{A} \geq 2$.

Such a relaxation is both natural and noteworthy. Motivating examples are those sets of operations obtained from clones by removing all those operations that have arity at most $m$ for some fixed $m \geq 1$. Clearly, these sets are closed under permutation of variables, addition of dummy variables, and composition, and hence they constitute subalgebras of the reduct $(\cl{O}_A; \zeta, \tau, \nabla, \ast)$.

This line of research has been carried out by several authors (see~\cite{BKKR,CF,Geiger,Harnau1-3,Hellerstein,Lehtonen,Pippenger,Poschel,Szabo}; a brief survey of the topic is provided in~\cite{Lehtonen}; see also chapter ``Galois connections for operations and relations'' by R. Pöschel in \cite{DEW}, pp.\ 231--258). Sections~\ref{sec:preli} and~\ref{sec:operations} of the current paper are based on~\cite{CL2010}.


\section{Sets of operations closed under permutation of variables, addition of dummy variables, and composition}
\label{sec:operations}

We now consider the problem of characterizing the sets of operations on an arbitrary nonempty set $A$ that are closed under permutation of variables, addition of dummy variables, and composition (but not necessarily under identification of variables).

A \emph{finite multiset} $S$ on a set $A$ is a map $\nu_S \colon A \to \nat$, called a \emph{multiplicity function,} such that the set $\{x \in A \mid \nu_S(x) \neq 0\}$ is finite. Then the sum $\sum_{x \in A} \nu_S(x)$ is a well-defined natural number, and it is called the \emph{cardinality} of $S$ and denoted by $\card{S}$. The number $\nu_S(x)$ is called the \emph{multiplicity} of $x$ in $S$. We may represent a finite multiset $S$ by giving a list enclosed in set brackets, i.e., $\{a_1, \dotsc, a_n\}$, where each element $x \in A$ occurs $\nu_S(x)$ times. If $S'$ is another multiset on $A$ corresponding to $\nu_{S'} \colon A \to \nat$, then we say that $S'$ is a \emph{submultiset} of $S$, denoted $S' \subseteq S$, if $\nu_{S'}(x) \leq \nu_S(x)$ for all $x \in A$. We denote the set of all finite multisets on $A$ by $\mathcal{M}(A)$. We also denote, for each $p \geq 0$, by $\mathcal{M}^{(p)}(A)$ the set of all finite multisets on $A$ of cardinality at most $p$, i.e., $\mathcal{M}^{(p)}(A) := \{S \in \mathcal{M}(A) \mid \card{S} \leq p\}$.

The set $\mathcal{M}(A)$ is partially ordered by the multiset inclusion relation ``$\subseteq$''. The \emph{join} $S \uplus S'$ and the \emph{difference} $S \setminus S'$ of multisets $S$ and $S'$ are determined by the multiplicity functions
\begin{align*}
\nu_{S \uplus S'}(x) &:= \nu_S(x) + \nu_{S'}(x), \\
\nu_{S \setminus S'}(x) &:= \max \{\nu_S(x) - \nu_{S'}(x), 0\},
\end{align*}
respectively. The \emph{empty multiset} on $A$ is the zero function, and it is denoted by $\varepsilon$. A \emph{partition} of a finite multiset $S$ on $A$ is a multiset $\{S_1, \dotsc, S_n\}$ (on the set of all finite multisets on $A$) of nonempty finite multisets on $A$ such that $S = S_1 \uplus \dotsb \uplus S_n$.

A \emph{pointed multiset} on a set $A$ is a pair $(x, S) \in A \times \mathcal{M}(A)$. The multiset $\{x\} \uplus S$ is called the \emph{underlying multiset} of $(x, S)$. We define the \emph{cardinality} of a pointed multiset $(x, S)$ to be equal to the cardinality of its underlying multiset, i.e., $\card{(x, S)} := \card{\{x\} \uplus S} = \card{S} + 1$. If $(x, S)$ and $(x', S)$ are pointed multisets on $A$, then we say that $(x, S)$ is a \emph{pointed submultiset} of $(x', S')$, denoted $(x, S) \subseteq (x', S')$, if $x = x'$ and $S \subseteq S'$.

For an $m \times n$ matrix $\mtrx{M} \in A^{m \times n}$, the \emph{multiset of columns} of $\mtrx{M}$ is the multiset $\mtrx{M}^*$ on $A^m$ defined by the function $\chi_\mtrx{M}$ which maps each $m$-tuple $\vect{a} \in A^m$ to the number of times $\vect{a}$ occurs as a column of $\mtrx{M}$. A matrix $\mtrx{N} \in A^{m \times n'}$ is a \emph{submatrix} of $\mtrx{M} \in A^{m \times n}$ if $\mtrx{N}^* \subseteq \mtrx{M}^*$, i.e., $\chi_\mtrx{N}(\vect{a}) \leq \chi_\mtrx{M}(\vect{a})$ for all $\vect{a} \in A^m$.

For an integer $m \geq 1$, an $m$-ary \emph{system of pointed multisets} on $A$ (a \emph{system} for short) is a pair $(\Phi, \Phi') \in \mathcal{P}(\mathcal{M}(A^m)) \times \mathcal{P}(A^m \times \mathcal{M}(A^m))$, where the \emph{antecedent} $\Phi \subseteq \mathcal{M}(A^m)$ is a set of finite multisets on $A^m$ and the \emph{consequent} $\Phi' \subseteq A^m \times \mathcal{M}(A^m)$ is a set of pointed multisets on $A^m$, satisfying the following two conditions:
\begin{enumerate}[(1)]
\item $(x,S) \in \Phi'$ and $S' \subseteq S$ imply $(x,S') \in \Phi'$;
\item $(x,S) \in \Phi'$ implies $\{x\} \uplus S \in \Phi$.
\end{enumerate}
For $m \geq 1$, we denote the set of all $m$-ary systems of pointed multisets on $A$ by $\cl{W}_A^{(m)}$, and we denote the set of all systems of pointed multisets on $A$ by
\[
\cl{W}_A := \bigcup_{m \geq 1} \cl{W}_A^{(m)}.
\]

For $(\Phi, \Phi'), (\Psi, \Psi') \in \mathcal{P}(\mathcal{M}(A^m)) \times \mathcal{P}(A^m \times \mathcal{M}(A^m))$, we write $(\Phi, \Phi') \subseteq (\Psi, \Psi')$ to mean componentwise inclusion, i.e., $\Phi \subseteq \Psi$ and $\Phi' \subseteq \Psi'$.

If $\mtrx{M} \in A^{m \times n}$ and $\Phi$ is a set of multisets over $A^m$, we write $\mtrx{M} \prec \Phi$ to mean that the multiset $\mtrx{M}^*$ of columns of $\mtrx{M}$ is an element of $\Phi$. If $\mtrx{M} = (\vect{m}_1, \dots, \vect{m}_n) \in A^{m \times n}$, $n \geq 1$, and $\Phi'$ is a set of pointed multisets $(x, S) \in A^m \times \mathcal{M}(A^m)$, then we write $\mtrx{M} \prec \Phi'$ to mean that $(\vect{m}_1, \{\vect{m}_2, \dots, \vect{m}_n\}) \in \Phi'$. If $f \in \cl{O}_A^{n}$ and $(\Phi, \Phi') \in \cl{W}_A^{(m)}$, we say that $f$ \emph{preserves} $(\Phi, \Phi')$, denoted $f \vartriangleright (\Phi, \Phi')$, if for every matrix $\mtrx{M} \in A^{m \times p}$ for some $p \geq 0$, it holds that whenever $\mtrx{M} \prec \Phi$ and $\mtrx{M} = [\mtrx{M}_1 | \mtrx{M}_2]$ where $\mtrx{M}_1$ has $n$ columns and $\mtrx{M}_2$ may be empty, we have that $[f \mtrx{M}_1 | \mtrx{M_2}] \prec \Phi'$.

In light of Theorem~\ref{thm:Galois}, the relation $\vartriangleright$ establishes a Galois connection between the sets $\cl{O}_A$ and $\cl{W}_A$. We say that a set $\cl{F} \subseteq \cl{O}_A$ of operations on $A$ is \emph{characterized} by a set $\cl{W} \subseteq \cl{W}_A$ of systems of pointed multisets, if
\[
\cl{F} = \{f \in \cl{O}_A \mid \forall (\Phi, \Phi') \in \cl{W} \colon f \vartriangleright (\Phi, \Phi')\},
\]
i.e., $\cl{F}$ is precisely the set of operations on $A$ that preserve every system of pointed multisets in $\cl{W}$. Similarly, we say that $\cl{W}$ is \emph{characterized} by $\cl{F}$, if
\[
\cl{W} = \{(\Phi, \Phi') \in \cl{W}_A \mid \forall f \in \cl{F} \colon f \vartriangleright (\Phi, \Phi')\},
\]
i.e., $\cl{W}$ is precisely the set of systems of pointed multisets that are preserved by every operation in $\cl{F}$. Thus, the Galois closed sets of operations (systems of pointed multisets) are exactly those that are characterized by systems of pointed multisets (operations, respectively).

\begin{example}
Let $p \geq 1$ be an integer and consider the set $\cl{O}_A^{(\geq p)} := \bigcup_{n \geq p} \cl{O}_A^{(n)}$ of operations on $A$ of arity at least $p$. It is clearly a subalgebra of $(\cl{O}_A; \zeta, \tau, \nabla, \ast)$ which does not contain all projections. Let $S$ be any multiset on $A$ of cardinality $p-1$. It is easy to verify that the system $(\{S\}, \emptyset) \in \cl{W}_A^{(1)}$ characterizes $\cl{O}_A^{(\geq p)}$. For, every operation on $A$ of arity at least $p$ vacuously preserves $(\{S\}, \emptyset)$, but no operation of arity less than $p$ can preserve $(\{S\}, \emptyset)$.
\end{example}

\begin{example}
Let $R \subseteq A^m$ be an $m$-ary relation $R$ on $A$. Let $\Phi_R$ be the set of all finite multisets $S$ on $A^m$ such that $\nu_S(\vect{a}) \neq 0$ only if $\vect{a} \in R$, and let $\Phi'_R := R \times \Phi_R$. It is easy to verify that $f \vartriangleright R$ if and only if $f \vartriangleright (\Phi_R, \Phi'_R)$. Hence, if $\cl{C}$ is a clone on $A$ such that $\cl{C} = \Pol \cl{Q}$ for some set $\cl{Q}$ of relations on $A$, then $\cl{C}$ is characterized by the set $\{(\Phi_R, \Phi'_R) \mid R \in \cl{Q}\}$ of systems of pointed multisets.

As a corollary, for a clone $\cl{C} = \Pol \cl{Q}$ and an integer $p \geq 1$, the set $\cl{C}^{(\geq p)} := \cl{C} \cap \cl{O}_A^{(\geq p)}$ of at least $p$-ary members of $\cl{C}$ is characterized by the set $\{(\Phi_R, \Phi'_R) \mid R \in \cl{Q}\} \cup \{(\{S\}, \emptyset)\}$ of systems of pointed multisets, where $S$ is any multiset on $A$ with $\card{S} = p-1$.
\end{example}

\begin{lemma}
\label{lem:partitions}
Let $\cl{F} \subseteq \cl{O}_A$ be a locally closed set of operations that is closed under permutation of variables, addition of dummy variables, and composition. Then for every $g \in \cl{O}_A \setminus \cl{F}$, there exists a system $(\Phi, \Phi') \in \cl{W}_A$ that is preserved by every operation in $\cl{F}$ but not by $g$.
\end{lemma}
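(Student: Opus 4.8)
The plan is to construct, for a given $g \in \cl{O}_A \setminus \cl{F}$ of arity $n$, a single system $(\Phi, \Phi') \in \cl{W}_A^{(m)}$ that separates $g$ from $\cl{F}$. Since $\cl{F}$ is locally closed and $g \notin \cl{F}$, there is a finite set $F \subseteq A^n$ on which $g$ disagrees with every $n$-ary member of $\cl{F}$; write $F = \{\vect{a}_1, \dots, \vect{a}_m\}$ and let $m := \card{F}$. Form the $m \times n$ matrix $\mtrx{M}_0$ whose rows are $\vect{a}_1, \dots, \vect{a}_m$ (in this fixed order), and let $\vect{c}^1, \dots, \vect{c}^n \in A^m$ be its columns. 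The antecedent $\Phi$ will be the set of all multisets of the form $\mtrx{N}^*$ where $\mtrx{N}$ is a matrix obtained from some $h \mtrx{L}$-type construction witnessing membership in $\cl{F}$ — more precisely, I would take $\Phi$ to be the smallest set of multisets on $A^m$ containing $\{\vect{c}^1, \dots, \vect{c}^n\}$ and closed under the operations on multisets/pointed multisets that mirror $\zeta$, $\tau$, $\nabla$, $\ast$ acting on matrices; and $\Phi'$ to be $\{(\vect{b}, S) : \{\vect{b}\} \uplus S \in \Phi \text{ and } \vect{b} = h\mtrx{L} \text{ for some } h \in \cl{F}, \mtrx{L} \text{ with } \mtrx{L}^* \uplus (\text{rest}) = \{\vect{c}^1,\dots,\vect{c}^n\}\}$, closed downward in the second coordinate to satisfy condition (1) and augmented so that (2) holds.

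The two things to check are: first, that every $f \in \cl{F}$ preserves $(\Phi, \Phi')$; second, that $g$ does not. For the first, suppose $\mtrx{M} \prec \Phi$ and $\mtrx{M} = [\mtrx{M}_1 \mid \mtrx{M}_2]$ with $\mtrx{M}_1$ having $\operatorname{arity}(f)$ columns. By the closure properties defining $\Phi$, the multiset $\mtrx{M}^*$ arises from $\{\vect{c}^1, \dots, \vect{c}^n\}$ by a sequence of permutation/cylindrification/composition moves; applying $f$ to $\mtrx{M}_1$ corresponds to forming $f$ composed with the operations already assembled, and since $\cl{F}$ is closed under $\zeta, \tau, \nabla, \ast$, the resulting operation lies in $\cl{F}$. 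Hence $f\mtrx{M}_1$ is the first column of a matrix witnessing membership, so $[f\mtrx{M}_1 \mid \mtrx{M}_2] \prec \Phi'$. For the second, note $\mtrx{M}_0 \prec \Phi$ since $\mtrx{M}_0^* = \{\vect{c}^1, \dots, \vect{c}^n\}$; taking $\mtrx{M}_1 = \mtrx{M}_0$ and $\mtrx{M}_2$ empty, we get $f\mtrx{M}_0 = (g(\vect{a}_1), \dots, g(\vect{a}_m))$ when $f = g$. If this tuple $(\vect{b}, S)$ with $\vect{b} = g(\vect{a}_1)$, $S = \{g(\vect{a}_2), \dots, g(\vect{a}_m)\}$ were in $\Phi'$, then by construction $\vect{b}$ would equal $h\mtrx{L}$ for some $h \in \cl{F}$ and appropriate $\mtrx{L}$, which would force an $n$-ary operation in $\cl{F}$ agreeing with $g$ on all of $F$ — contradicting the choice of $F$.

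The main obstacle is the bookkeeping in the definition of $\Phi$ and $\Phi'$: I must define these sets so that (a) the closure properties used in the ``$f \in \cl{F}$ preserves'' direction are exactly captured, (b) conditions (1) and (2) of a system hold, and (c) the consequent $\Phi'$ is not accidentally too large, i.e., it genuinely records only tuples realizable by operations of $\cl{F}$ on submatrices of things in $\Phi$. The cleanest route is probably to define $\Phi := \{\mtrx{M}^* : \mtrx{M} \in A^{m \times p}, p \geq 0, \text{ and the columns of } \mtrx{M} \text{ together with some padding equal those of some } \mtrx{M}_0\text{-derived matrix}\}$ — or even more directly, to let $\Phi$ be the set of all $\mtrx{M}^*$ such that $\mtrx{M}$ has the property that for every $n$-ary $h \in \cl{F}$ there is a $p$-ary $h' \in \cl{F}$ with $h'\mtrx{M} = h\mtrx{M}_0$ (suitably extended), and set $\Phi' := \{(\vect{b}, S) : \{\vect{b}\}\uplus S \in \Phi$, and $\vect{b} = h'\mtrx{L}$ where $[\,\mtrx{L}\mid -\,]$ realizes $\{\vect{b}\}\uplus S$ and $h' \in \cl{F}\}$. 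Verifying that this $\Phi'$ inherits downward closure in $S$ from closure of $\cl{F}$ under $\nabla$, and that $g \vartriangleright (\Phi, \Phi')$ fails precisely because of the local disagreement, is then a matter of unwinding definitions; the role of local closedness is to guarantee that ``agrees with $g$ on every finite $F$'' would put $g$ back into $\cl{F}$, which is what makes the separating system finite-dimensional (of arity $m = \card F$).
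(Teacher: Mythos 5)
Your opening is the same as the paper's (use local closure to get a finite witness set $F$, form the matrix whose rows are the elements of $F$), but after that the proposal does not actually contain a proof: the separating system is never defined. You offer two different sketches (``the smallest set of multisets closed under operations mirroring $\zeta,\tau,\nabla,\ast$'' versus a condition quantifying over all $n$-ary $h\in\cl{F}$), you acknowledge that ``the main obstacle is the bookkeeping,'' and you defer it --- but that bookkeeping \emph{is} the lemma. Because a multiset of columns carries no order and the preservation relation quantifies over every split $[\mtrx{M}_1\mid\mtrx{M}_2]$, the consequent must record every way of applying members of $\cl{F}$ to arbitrary submultisets of columns, iterated. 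The paper's construction makes this precise with triples $(X,\Pi,\mtrx{D})$: a leftover submultiset $X$ of $\mtrx{M}^*$, a partition $\Pi$ of $\mtrx{M}^*\setminus X$ into blocks each of cardinality at least the minimal arity $\mu$ occurring in $\cl{F}$ (this $\mu$, and the degenerate case $\cl{F}=\emptyset$, are entirely absent from your proposal, yet without the cardinality constraint the chosen values $\vect{d}^i\in\cl{F}\mtrx{M}_i$ need not exist, since $\cl{F}$ need not contain projections or low-arity operations), and the tuple $\mtrx{D}$ of chosen values.

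Two verifications that you wave at would each require real work under any correct definition. First, preservation by $f\in\cl{F}$: the hard case is when some columns consumed by $f$ are not original columns of $\mtrx{M}$ but previously computed values $\vect{d}^i$; one must permute the variables of $f$ and compose with an $h\in\cl{F}$ realizing $\vect{d}^i$ (so $f'\ast h\in\cl{F}$) and then show the resulting matrix is again of the prescribed form --- the paper does this by induction on the number of blocks, and your phrase ``applying $f$ corresponds to composing with the operations already assembled'' hides exactly this induction. Second, you propose to force condition (1) by ``closing downward'' and condition (2) by ``augmenting'': augmenting enlarges the antecedent $\Phi$, after which preservation by $\cl{F}$ must be re-proved for the new antecedent multisets, so these conditions cannot be imposed by fiat; in the paper the downward closure of $\Phi'$ is \emph{derived} from the construction (a deleted element of $S$ is absorbed either as a dummy column into $\mtrx{M}_1$ or by merging blocks, using closure under addition of dummy variables). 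Finally, your non-preservation argument for $g$ also needs the observation that values realized by $\cl{F}$ on proper submatrices of $\mtrx{M}$ already lie in $\cl{F}\mtrx{M}$ (via dummy variables and permutation), so that $g\mtrx{M}\notin\cl{F}\mtrx{M}$ excludes all first components of the consequent, not just those coming from full-width matrices.
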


\begin{proof}
The statement is vacuously true for $\cl{F} = \cl{O}_A$, and it is easily seen to hold for $\cl{F} = \emptyset$. Thus, we can assume that $\emptyset \subsetneq \cl{F} \subsetneq \cl{O}_A$. Suppose that $g \in \cl{O}_A \setminus \cl{F}$ is $n$-ary. Since $\cl{F}$ is locally closed, there is a finite subset $F \subseteq A^n$ such that $g|_F \neq f|_F$ for every $f \in \cl{F}^{(n)}$. Clearly $F$ is nonempty. Let $\mtrx{M}$ be a $\card{F} \times n$ matrix whose rows are the elements of $F$ in some fixed order.

Let $\mu$ be the smallest integer such that $\cl{F}^{(\mu)} \neq \emptyset$. Note that since $\cl{F}$ is closed under addition of dummy variables, $\cl{F}^{(m)} \neq \emptyset$ for all $m \geq \mu$. Let $X$ be any submultiset of $\mtrx{M}^*$. (Recall that $\mtrx{M}^*$ denotes the multiset of columns of $\mtrx{M}$.) Let $\Pi := (\mtrx{M}_1, \dotsc, \mtrx{M}_q)$ be a sequence of submatrices of $\mtrx{M}$ such that $\{\mtrx{M}_1^*, \dotsc, \mtrx{M}_q^*\}$ is a partition of $\mtrx{M}^* \setminus X$ where each block $\mtrx{M}_i^*$ has cardinality at least $\mu$. For $1 \leq i \leq q$, let $\vect{d}^i \in \cl{F} \mtrx{M}_i$, and let $\mtrx{D} := (\vect{d}^1, \dotsc, \vect{d}^q)$. (Note that each $\cl{F} \mtrx{M}_i$ is nonempty, because $\cl{F}$ contains functions of arity $\lvert \mtrx{M}_i^* \rvert \geq \mu$. Observe also that if $\mtrx{M}_i$ is a submatrix of $\mtrx{M}_j$, then $\cl{F} \mtrx{M}_i \subseteq \cl{F} \mtrx{M}_j$, because $\cl{F}$ is closed under permutation of variables and addition of dummy variables; in particular, each $\cl{F} \mtrx{M}_i$ is a subset of $\cl{F} \mtrx{M}$.) Denote $\lceil X, \Pi, \mtrx{D} \rfloor := \mtrx{D}^* \uplus X$, and for $X \neq \mtrx{M}^*$, denote
\[
\langle X, \Pi, \mtrx{D} \rangle := (\vect{d}^1, \{\vect{d}^2, \dots, \vect{d}^q\} \uplus X) \in A^m \times \mathcal{M}(A^m).
\]
Note that if $\langle X, \Pi, \mtrx{D} \rangle = (x, S)$, then $\lceil X, \Pi, \mtrx{D} \rfloor = \{x\} \uplus S$.

We define $\Phi'$ to be the set of all $\langle X, \Pi, \mtrx{D} \rangle$ for all possible choices of $X$, $\Pi$, and $\mtrx{D}$ such that $X \neq \mtrx{M}^*$, and we define $\Phi$ to be the set of all $\lceil X, \Pi, \mtrx{D} \rfloor$ for all possible choices of $X$, $\Pi$ and $\mtrx{D}$, i.e.,
\[
\Phi := \{x \uplus S \mid (x,S) \in \Phi'\} \cup \{\mtrx{M}^*\}.
\]
We first verify that $(\Phi, \Phi')$ is indeed a system of pointed multisets. The first condition in the definition of a system of pointed multisets is clearly satisfied, by the definition of $(\Phi, \Phi')$. For the second condition, let $(x,S) = \langle X, \Pi, \mtrx{D} \rangle \in \Phi'$, where $\Pi := (\mtrx{M}_1, \dots, \mtrx{M}_q)$ and $\mtrx{D} := (\vect{d}^1, \dots, \vect{d}^q)$. A simple inductive argument proves that $(x,S') \in \Phi'$ for all $S' \subseteq S$, and it suffices to show that $(x,S') \in \Phi'$ whenever $S = S' \uplus \{\vect{y}\}$ for some $\vect{y} \in A^m$. If $\vect{y} \in X$, then we let 
\begin{align*}
X' &:= X \setminus \{\vect{y}\}, \\
\mtrx{M}'_1 &:= [M_1 | \vect{y}], \\
\Pi' &:= (\mtrx{M}'_1, \mtrx{M}_2, \dots, \mtrx{M}_q).
\end{align*}
Since $\cl{F}$ is closed under addition of dummy variables, we have that $\vect{d}^1 \in \cl{F} \mtrx{M}'_1$, and hence we have $(x, S') = \langle X', \Pi', \mtrx{D} \rangle \in \Phi'$. If $\vect{y} \in \{\vect{d}^2, \dots, \vect{d}^q\}$, say, $\vect{y} = \vect{d}^j$, then we let
\begin{align*}
\mtrx{M}'_1 &:= [\mtrx{M}_1 | \mtrx{M}_j], \\
\Pi' &:= (\mtrx{M}_1, \dots, \mtrx{M}_{j-1}, \mtrx{M}_{j+1}, \dots, \mtrx{M}_q),
\end{align*}
and again, since $\cl{F}$ is closed under addition of dummy variables, we have that $\vect{d}^1 \in \cl{F} \mtrx{M}'_1$ and we can let
\[
\mtrx{D}' := (\vect{d}^1, \dots, \vect{d}^{j-1}, \vect{d}^{j+1}, \dots, \vect{d}^q),
\]
and we have $(x, S') = \langle X, \Pi', \mtrx{D}' \rangle \in \Phi'$.

Observe first that $g \not\vartriangleright (\Phi, \Phi')$. For, we have that $\mtrx{M} \prec \Phi$ by the definition of $\Phi$. On the other hand, since $g \mtrx{M} \notin \cl{F} \mtrx{M}$, we have that $g \mtrx{M}$ is not the first component of $\langle X, \Pi, \mtrx{D} \rangle$ for all $X$, $\Pi$, $\mtrx{D}$, and hence $g \mtrx{M} \nprec \Phi'$.

It remains to show that $f \vartriangleright (\Phi, \Phi')$ for all $f \in \cl{F}$. Assume that $f$ is $n$-ary. If $\mtrx{N} := [\mtrx{N}_1 | \mtrx{N}_2] \prec \Phi$, where $\mtrx{N}_1$ has $n$ columns, then $\mtrx{N}^* = \lceil X, \Pi, \mtrx{D} \rfloor$ for some
\[
X,
\quad
\Pi := (\mtrx{M}_1, \dotsc, \mtrx{M}_q),
\quad
\mtrx{D} := (\vect{d}^1, \dotsc, \vect{d}^q),
\]
where $\{\mtrx{M}_1^*, \dotsc, \mtrx{M}_q^*\}$ is a partition of $\mtrx{M}^* \setminus X$ and for $1 \leq i \leq q$, $\vect{d}^i \in \cl{F} \mtrx{M}_i$. We will show by induction on $q$ that for every $f \in \cl{F}$, there exist $X'$, $\Pi'$, $\mtrx{D}'$ such that $(f \mtrx{N}_1, \mtrx{N}_2^*) = \langle X', \Pi', \mtrx{D}' \rangle$ and hence $[f \mtrx{N}_1 | \mtrx{N}_2] \prec \Phi'$.

If $q = 0$, then we have $X = \mtrx{M}^*$, $\Pi = ()$, $\mtrx{D} = ()$, and $\lceil X, \Pi, \mtrx{D} \rfloor = \mtrx{M}^*$, and the condition $\mtrx{N}^* = \mtrx{M}^*$ implies that $\mtrx{N}_1$ is a submatrix of $\mtrx{M}$. Then $f \mtrx{N}_1 \in \cl{F} \mtrx{N}_1$ and
\[
(f \mtrx{N}_1, \mtrx{N}_2^*) = \langle \mtrx{M}^* \setminus \mtrx{N}_1^*, (\mtrx{N}_1), (f \mtrx{N}_1) \rangle.
\]

Assume that the claim holds for $q = k \geq 0$, and consider the case that $q = k + 1$. We assume that $\mtrx{N} = [\mtrx{N}_1 | \mtrx{N}_2]$ and $\mtrx{N}^* = \lceil X, \Pi, \mtrx{D} \rfloor$. If $\mtrx{N}_1^* \subseteq X$, then $f \mtrx{N}_1 \in \cl{F} \mtrx{N}_1$ and
\[
(f \mtrx{N}_1, \mtrx{N}_2^*) =
\langle X \setminus \mtrx{N}_1^*, (\mtrx{M}_1, \dotsc, \mtrx{M}_{k+1}, \mtrx{N}_1), (\vect{d}^1, \dotsc, \vect{d}^{k+1}, f \mtrx{N}_1) \rangle.
\]
Otherwise, for some $i \in \{1, \dotsc, k+1\}$, $\vect{d}^i$ is a column of $\mtrx{N}_1$. Denote by $\mtrx{N}'_1$ the matrix obtained from $\mtrx{N}_1$ by deleting the column $\vect{d}^i$. Since $\cl{F}$ is closed under permutation of variables, there is an operation $f' \in \cl{F}^{(n)}$ such that $f \mtrx{N}_1 = f' [\vect{d}^i | \mtrx{N}'_1]$. By the definition of $\vect{d}^i$, there is an operation $h \in \cl{F}$ such that $h \mtrx{M}_i = \vect{d}^i$, and we have that
\begin{equation}
\label{eq:fh}
f' [\vect{d}^i | \mtrx{N}'_1]
= f' [h \mtrx{M}_i | \mtrx{N}'_1]
= (f' \ast h) [\mtrx{M}_i | \mtrx{N}'_1].
\end{equation}
Since $\cl{F}$ is closed under composition, $f' \ast h \in \cl{F}$. Furthermore,
\begin{multline*}
[\mtrx{M}_i | \mtrx{N}'_1 | \mtrx{N}_2]^* = \\
\lceil X \uplus \mtrx{M}_i^*,
(\mtrx{M}_1, \dotsc, \mtrx{M}_{i-1}, \mtrx{M}_{i+1}, \dotsc, \mtrx{M}_{k+1}),
(\vect{d}^1, \dotsc, \vect{d}^{i-1}, \vect{d}^{i+1}, \dotsc, \vect{d}^{k+1}) \rfloor.
\end{multline*}
By the induction hypothesis, there exist $X'$, $\Pi'$, $\mtrx{D}'$ such that
\[
((f' \ast h) [\mtrx{M}_i | \mtrx{N}'_1], \mtrx{N}_2^*) = \langle X', \Pi', \mtrx{D}' \rangle.
\]
By \eqref{eq:fh}, we have that $(f' \ast h) [\mtrx{M}_i | \mtrx{N}'_1] = f \mtrx{N}_1$, and hence $(f \mtrx{N}_1, \mtrx{N}_2^*) = \langle X', \Pi', \mtrx{D}' \rangle$.
\end{proof}

\begin{theorem}
\label{thm:pac}
Let $A$ be an arbitrary, possibly infinite nonempty set. For any set $\cl{F} \subseteq \cl{O}_A$ of operations, the following two conditions are equivalent:
\begin{enumerate}[(i)]
\item $\cl{F}$ is locally closed and closed under permutation of variables, addition of dummy variables, and composition.
\item $\cl{F}$ is characterized by a set $\cl{W} \subseteq \cl{W}_A$ of systems of pointed multisets.
\end{enumerate}
\end{theorem}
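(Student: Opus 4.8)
I would prove the equivalence by establishing the two implications separately.

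\emph{The implication (i)$\Rightarrow$(ii).} Assuming (i), I would simply set
\[
\cl{W} := \{(\Phi, \Phi') \in \cl{W}_A \mid f \vartriangleright (\Phi, \Phi') \text{ for every } f \in \cl{F}\}
\]
and claim that $\cl{F}$ is characterized by $\cl{W}$. That every $f \in \cl{F}$ preserves every system in $\cl{W}$ is immediate from the definition. Conversely, if some $g \in \cl{O}_A$ preserved every system in $\cl{W}$ yet $g \notin \cl{F}$, then Lemma~\ref{lem:partitions}---whose hypotheses are exactly those of (i)---would furnish a genuine system $(\Phi, \Phi') \in \cl{W}_A$ preserved by all of $\cl{F}$, hence belonging to $\cl{W}$, but not preserved by $g$, a contradiction. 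So the whole of this direction rests on the lemma already in hand.

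\emph{The implication (ii)$\Rightarrow$(i).} Here I would write $\cl{F} = \bigcap_{(\Phi, \Phi') \in \cl{W}} \cl{G}_{(\Phi, \Phi')}$, where $\cl{G}_{(\Phi, \Phi')} := \{f \in \cl{O}_A \mid f \vartriangleright (\Phi, \Phi')\}$, and note that being locally closed, as well as closure under each of $\zeta$, $\tau$, $\nabla$, $\ast$, is inherited by arbitrary intersections; so it suffices to verify these five properties for a single $\cl{G} := \cl{G}_{(\Phi, \Phi')}$ with $(\Phi, \Phi')$ of arity $m$. For being locally closed: whether an $n$-ary operation $f$ preserves $(\Phi, \Phi')$ is tested on matrices $[\mtrx{M}_1 | \mtrx{M}_2] \prec \Phi$ with $\mtrx{M}_1 \in A^{m \times n}$, and since $\mtrx{M}_1$ has at most $m$ distinct rows, the relevant value $f\mtrx{M}_1$ is determined by the restriction of $f$ to a finite subset of $A^n$; hence if $f$ agrees on every finite subset of $A^n$ with some member of $\cl{G}^{(n)}$, it lies in $\cl{G}$. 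For closure under $\zeta$ and $\tau$: applying the relevant cyclic shift or transposition to the first $n$ columns of a test matrix leaves its column multiset---and therefore membership in $\Phi$---unchanged, and turns an instance of preservation by $f$ into one of preservation by $\zeta f$ (respectively $\tau f$).

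The remaining two cases are the ones genuinely using the defining conditions (1) and (2) of a system, and I expect closure under composition to be the main obstacle. For $\nabla$: given $[\mtrx{M}_1 | \mtrx{M}_2] \prec \Phi$ with $\mtrx{M}_1 = [\vect{a}^1 | \mtrx{M}_1']$, I would shift the lone column $\vect{a}^1$ into the passive block; preservation by $f$ yields $(f\mtrx{M}_1', \{\vect{a}^1\} \uplus \mtrx{M}_2^*) \in \Phi'$, and the downward closure (1) of the consequent then gives $(f\mtrx{M}_1', \mtrx{M}_2^*) \in \Phi'$, which is exactly preservation by $\nabla f$. For $\ast$: with $f$ of arity $n$ and $g$ of arity $k$, splitting the active block as $\mtrx{M}_1 = [\mtrx{P} | \mtrx{Q}]$ where $\mtrx{P}$ has $k$ columns, I would first apply preservation by $g$ to $\mtrx{M} = [\mtrx{P} | \mtrx{Q} | \mtrx{M}_2] \prec \Phi$ to obtain $[g\mtrx{P} | \mtrx{Q} | \mtrx{M}_2] \prec \Phi'$; condition (2) then lifts the underlying multiset $\{g\mtrx{P}\} \uplus \mtrx{Q}^* \uplus \mtrx{M}_2^*$ back into the antecedent $\Phi$, so that $[g\mtrx{P} | \mtrx{Q} | \mtrx{M}_2]$ is itself a legitimate test matrix for $f$; applying preservation by $f$ to its first $n$ columns $[g\mtrx{P} | \mtrx{Q}]$ yields $[(f \ast g)\mtrx{M}_1 | \mtrx{M}_2] \prec \Phi'$, as desired. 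Throughout, the delicate point is the bookkeeping of the partition of each test matrix into its active block (to which the operation is applied) and passive block $\mtrx{M}_2$; the conceptual heart of the composition step is that condition (2) is precisely what allows the single column produced by the inner operation $g$ to re-enter as admissible input to the outer operation $f$.
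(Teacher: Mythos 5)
Your proposal is correct and takes essentially the same approach as the paper: the implication (i)$\Rightarrow$(ii) rests on Lemma~\ref{lem:partitions} exactly as in the paper's proof, and your verification of (ii)$\Rightarrow$(i) --- in particular using condition (2) to move the column produced by the inner operation back into the antecedent for the composition step, and testing local closure on the finite set of rows of the active block --- is the paper's argument, with the parts the paper calls straightforward ($\zeta$, $\tau$, and $\nabla$ via condition (1)) merely written out in detail.
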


\begin{proof}
$\text{(ii)} \Rightarrow \text{(i)}$: It is straightforward to verify that the set of operations preserving a set of systems of pointed multisets is closed under permutation of variables and addition of dummy variables. To see that it is closed under composition, let $f \in \cl{F}^{(n)}$ and $g \in \cl{F}^{(p)}$, and consider $f \ast g \colon A^{n + p - 1} \to A$. Let $(\Phi, \Phi') \in \cl{W}$, and let $\mtrx{M} := [\mtrx{M}_1 | \mtrx{M}_2 | \mtrx{M}_3] \prec \Phi$, where $\mtrx{M}_1$ has $p$ columns and $\mtrx{M}_2$ has $n-1$ columns. Since $g \vartriangleright (\Phi, \Phi')$, we have that $[g \mtrx{M}_1 | \mtrx{M}_2 | \mtrx{M}_3] \prec \Phi'$ and hence, by property (2) of the definition of a system of pointed multisets, we have $[g \mtrx{M}_1 | \mtrx{M}_2 | \mtrx{M}_3] \prec \Phi$. Then $[g \mtrx{M}_1 | \mtrx{M}_2]$ has $n$ columns, and since $f \vartriangleright (\Phi, \Phi')$, we have that $\bigl[ f[g \mtrx{M}_1 | \mtrx{M}_2] \big| \mtrx{M}_3 \bigr] \prec \Phi'$. But
\[
f[g \mtrx{M}_1 | \mtrx{M}_2] = (f \ast g)[\mtrx{M}_1 | \mtrx{M}_2],
\]
so $\bigl[ (f \ast g)[\mtrx{M}_1 | \mtrx{M}_2] \big| \mtrx{M}_3 \bigr] \prec \Phi'$, and thus $f \ast g \vartriangleright (\Phi, \Phi')$.

It remains to show that $\cl{F}$ is locally closed. It is clear that $\cl{O}_A$ is locally closed, so we may assume that $\cl{F} \neq \cl{O}_A$.
Suppose on the contrary that there is a $g \in \cl{O}_A \setminus \cl{F}$, say of arity $n$, such that for every finite subset $F \subseteq A^n$, there is an $f \in \cl{F}^{(n)}$ such that $g|_F = f|_F$. Since $\cl{F}$ is characterized by $\cl{W}$ and $g \notin \cl{F}$, there is a system $(\Phi, \Phi') \in \cl{W}$ such that $g \not\vartriangleright (\Phi, \Phi')$, and hence for some matrix $\mtrx{M} := [\mtrx{M}_1 | \mtrx{M}_2] \prec \Phi$ where $\mtrx{M}_1$ has $n$ columns, we have that $[g \mtrx{M}_1 | \mtrx{M}_2] \nprec \Phi'$. Let $F$ be the finite set of rows of $\mtrx{M}_1$. By our assumption, there is an $f \in \cl{F}^{(n)}$ such that $g|_F = f|_F$, and hence
\[
f \mtrx{M}_1 = f|_F \mtrx{M}_1 = g|_F \mtrx{M}_1 = g \mtrx{M}_1,
\]
and so $[f \mtrx{M}_1 | \mtrx{M}_2] \nprec \Phi'$, which contradicts the assumption that $f \vartriangleright (\Phi, \Phi')$.

\smallskip
$\text{(i)} \Rightarrow \text{(ii)}$: It follows from Lemma~\ref{lem:partitions} that for every operation $g \in \cl{O}_A \setminus \cl{F}$, there exists a system $(\Phi, \Phi') \in \cl{W}_A$ that is preserved by every operation in $\cl{F}$ but not by $g$. The set of all such ``separating'' systems of pointed multisets, for each $g \in \cl{O}_A \setminus \cl{F}$, characterizes $\cl{F}$.
\end{proof}


\section{Closure conditions for systems of pointed multisets}
\label{sec:cl}

In this section we will describe the sets of systems of pointed multisets that are characterized by sets of operations in terms of explicit closure conditions. We will follow Couceiro and Foldes's~\cite{CF} proof techniques and adapt their notion of conjunctive minor to systems of pointed multisets. We first introduce several technical notions and definitions that will be needed in the statement of Theorem~\ref{thm:spms} and in its proof.

For maps $f \colon A \to B$ and $g \colon C \to D$, the composition $g \circ f$ is defined only if $B = C$. Removing this restriction, the \emph{concatenation} of $f$ and $g$ is defined to be the map $gf \colon f^{-1}[B \cap C] \to D$ given by the rule $(gf)(a) = g \bigl( f(a) \bigr)$ for all $a \in f^{-1}[B \cap C]$. Clearly, if $B = C$, then $gf = g \circ f$; thus functional composition is subsumed and extended by concatenation. Concatenation is associative, i.e., for any maps $f$, $g$, $h$, we have $h(gf) = (hg)f$.

For a family $(g_i)_{i \in I}$ of maps $g_i \colon A_i \to B_i$ such that $A_i \cap A_j = \emptyset$ whenever $i \neq j$, we define the (\emph{piecewise}) \emph{sum of the family $(g_i)_{i \in I}$} to be the map $\sum_{i \in I} g_i \colon \bigcup_{i \in I} A_i \to \bigcup_{i \in I} B_i$ whose restriction to each $A_i$ coincides with $g_i$. If $I$ is a two-element set, say $I = \{1, 2\}$, then we write $g_1 + g_2$. Clearly, this operation is associative and commutative.

Concatenation is distributive over summation, i.e., for any family $(g_i)_{i \in I}$ of maps on disjoint domains and any map $f$,
\[
\bigl( \sum_{i \in I} g_i \bigr) f = \sum_{i \in I} (g_i f)
\qquad \text{and} \qquad
f \bigl( \sum_{i \in I} g_i \bigr) = \sum_{i \in I} (f g_i).
\]
In particular, if $g_1$ and $g_2$ are maps with disjoint domains, then
\[
(g_1 + g_2) f = (g_1 f) + (g_2 f)
\qquad \text{and} \qquad
f (g_1 + g_2) = (f g_1) + (f g_2).
\]

Let $g_1, \dotsc, g_n$ be maps from $A$ to $B$. The $n$-tuple $(g_1, \dotsc, g_n)$ determines a \emph{vector-valued map} $g \colon A \to B^n$, given by $g(a) := \bigl( g_1(a), \dotsc, g_n(a) \bigr)$ for every $a \in A$. For $f \colon B^n \to C$, the composition $f \circ g$ is a map from $A$ to $C$, denoted by $f(g_1, \dotsc, g_n)$, and called the \emph{composition of $f$ with $g_1, \dotsc, g_n$.} Suppose that $A \cap A' = \emptyset$ and $g'_1, \dotsc, g'_n$ are maps from $A'$ to $B$. Let $g$ and $g'$ be the vector-valued maps determined by $(g_1, \dotsc, g_n)$ and $(g'_1, \dotsc, g'_n)$, respectively. We have that $f(g + g') = (fg) + (fg')$, i.e.,
\[
f \bigl( (g_1 + g'_1), \dotsc, (g_n + g'_n) \bigr) = f(g_1, \dotsc, g_n) + f(g'_1, \dotsc, g'_n).
\]

For $B \subseteq A$, $\iota_{A B}$ denotes the canonical injection (inclusion map) from $B$ to $A$. Thus the \emph{restriction} $f|_B$ of any map $f \colon A \to C$ to the subset $B$ is given by $f|_B = f \iota_{A B}$.

\begin{remark}
Observe that the notation $f \mtrx{M}$ introduced in Section~\ref{sec:preli} is in accordance with the notation for concatenation of mappings. Since a matrix $\mtrx{M} := (\vect{a}^1, \dotsc, \vect{a}^n)$ is an $n$-tuple of $m$-tuples $\vect{a}^i \colon m \to A$, $1 \leq i \leq n$, the composition of the vector-valued map $(\vect{a}^1, \dotsc, \vect{a}^n) \colon m \to A^n$ with $f \colon A^n \to B$ gives rise to the $m$-tuple $f(\vect{a}^1, \dotsc, \vect{a}^n) \colon m \to B$.
\end{remark}

Let $m$ and $n$ be positive integers (viewed as ordinals, i.e., $m = \{0, \dotsc, m-1\}$). Let $h \colon n \to m \cup V$ where $V$ is an arbitrary set of symbols disjoint from the ordinals, called \emph{existentially quantified indeterminate indices,} or simply \emph{indeterminates,} and let $\sigma \colon V \to A$ be any map, called a \emph{Skolem map.} Then each $m$-tuple $\vect{a} \in A^m$, being a map $\vect{a} \colon m \to A$, gives rise to an $n$-tuple $(\vect{a} + \sigma) h =: (b_0, \dotsc, b_{n-1}) \in A^n$, where
\[
b_i :=
\begin{cases}
a_{h(i)}, & \text{if $h(i) \in \{0, 1, \dotsc, m-1\}$,} \\
\sigma(h(i)), & \text{if $h(i) \in V$.}
\end{cases}
\]
Let $H := (h_j)_{j \in J}$ be a nonempty family of maps $h_j \colon n_j \to m \cup V$, where each $n_j$ is a positive integer. Then $H$ is called a \emph{minor formation scheme} with \emph{target} $m$, \emph{indeterminate set} $V$, and \emph{source family} $(n_j)_{j \in J}$.

Let $(\Phi_j)_{j \in J}$ be a family of sets of multisets (or pointed multisets), each $\Phi_j$ on the set $A^{n_j}$, and let $\Phi$ be a set of multisets (or pointed multisets, respectively), on $A^m$. We say that $\Phi$ is a \emph{restrictive conjunctive minor} of the family $(\Phi_j)_{j \in J}$ \emph{via} $H$, if, for every $m \times n$ matrix $\mtrx{M} := (\vect{a}^1, \dotsc, \vect{a}^n) \in A^{m \times n}$,
\[
\mtrx{M} \prec \Phi
\implies
\bigl[ \exists \sigma_1, \dotsc, \sigma_n \in A^V \: \forall j \in J \colon \bigl( (\vect{a}^1 + \sigma_1) h_j, \dotsc, (\vect{a}^n + \sigma_n) h_j \bigr) \prec \Phi_j \bigr].
\]
On the other hand, if, for every $m \times n$ matrix $\mtrx{M} := (\vect{a}^1, \dotsc, \vect{a}^n) \in A^{m \times n}$,
\[
\bigl[ \exists \sigma_1, \dotsc, \sigma_n \in A^V \: \forall j \in J \colon \bigl( (\vect{a}^1 + \sigma_1) h_j, \dotsc, (\vect{a}^n + \sigma_n) h_j \bigr) \prec \Phi_j \bigr]
\implies
\mtrx{M} \prec \Phi,
\]
then we say that $\Phi$ is an \emph{extensive conjunctive minor} of the family $(\Phi_j)_{j \in J}$ \emph{via} $H$. If $\Phi$ is both a restrictive conjunctive minor and an extensive conjunctive minor of the family $(\Phi_j)_{j \in J}$ via $H$, i.e., for every $m \times n$ matrix $\mtrx{M} := (\vect{a}^1, \dotsc, \vect{a}^n) \in A^{m \times n}$,
\[
\mtrx{M} \prec \Phi
\iff
\bigl[ \exists \sigma_1, \dotsc, \sigma_n \in A^V \: \forall j \in J \colon \bigl( (\vect{a}^1 + \sigma_1) h_j, \dotsc, (\vect{a}^n + \sigma_n) h_j \bigr) \prec \Phi_j \bigr],
\]
then $\Phi$ is said to be a \emph{tight conjunctive minor} of the family $(\Phi_j)_{j \in J}$ \emph{via} $H$.

If $(\Phi, \Phi') \in \cl{W}_A$ is a system of pointed multisets on $A$ and $(\Phi_j, \Phi'_j)_{j \in J}$ is a family of systems of pointed multisets on $A$ (of various arities) such that $\Phi$ is a restrictive conjunctive minor of the family $(\Phi_j)_{j \in J}$ of multisets via a scheme $H$ and $\Phi'$ is an extensive conjunctive minor of the family $(\Phi'_j)_{j \in J}$ of pointed multisets via the same scheme $H$, then $(\Phi, \Phi')$ is said to be a \emph{conjunctive minor} of the family $(\Phi_j, \Phi'_j)_{j \in J}$ \emph{via} $H$. If both $\Phi$ and $\Phi'$ are tight conjunctive minors of the respective families via $H$, then $(\Phi, \Phi')$ is said to be a \emph{tight conjunctive minor} of the family $(\Phi_j, \Phi'_j)_{j \in J}$ \emph{via} $H$.
If the minor formation scheme $H := (h_j)_{j \in J}$ and the family $(\Phi_j, \Phi'_j)_{j \in J}$ are indexed by a singleton $J := \{0\}$, then a tight conjunctive minor $(\Phi, \Phi')$ of a family consisting of a single system of pointed multisets $(\Phi_0, \Phi'_0)$ is called a \emph{simple minor} of $(\Phi_0, \Phi'_0)$.

\begin{lemma}
\label{lemma:cm}
Let $(\Phi, \Phi')$ be a conjunctive minor of a nonempty family \linebreak[4] $(\Phi_j, \Phi'_j)_{j \in J}$ of members of $\cl{W}_A$, and let $f \in \cl{O}_A$. If $f \vartriangleright (\Phi_j, \Phi'_j)$ for all $j \in J$, then $f \vartriangleright (\Phi, \Phi')$.
\end{lemma}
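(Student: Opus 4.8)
The plan is to unwind the preservation relation $\vartriangleright$ together with the two conjunctive-minor conditions, and to bridge the restrictive condition on $\Phi$ with the extensive condition on $\Phi'$ by means of the distributivity of functional composition over piecewise sums of maps developed earlier in this section; this is the analogue for systems of pointed multisets of the Couceiro--Foldes argument for conjunctive minors of relations. Let $H := (h_j)_{j \in J}$ be the minor formation scheme, with target $m$, indeterminate set $V$ and source family $(n_j)_{j \in J}$, that exhibits $(\Phi, \Phi')$ as a conjunctive minor of $(\Phi_j, \Phi'_j)_{j \in J}$, so that $(\Phi, \Phi') \in \cl{W}_A^{(m)}$ and $(\Phi_j, \Phi'_j) \in \cl{W}_A^{(n_j)}$; let $k$ be the arity of $f$. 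To establish $f \vartriangleright (\Phi, \Phi')$, I would take an arbitrary matrix $\mtrx{M} := (\vect{a}^1, \dotsc, \vect{a}^p) \in A^{m \times p}$ with $\mtrx{M} \prec \Phi$ and an arbitrary splitting $\mtrx{M} = [\mtrx{M}_1 | \mtrx{M}_2]$ in which $\mtrx{M}_1 := (\vect{a}^1, \dotsc, \vect{a}^k)$, and aim to show $[f \mtrx{M}_1 | \mtrx{M}_2] \prec \Phi'$. First, since $\Phi$ is a \emph{restrictive} conjunctive minor of $(\Phi_j)_{j \in J}$ via $H$ and $\mtrx{M} \prec \Phi$, I obtain Skolem maps $\sigma_1, \dotsc, \sigma_p \in A^V$ such that $\mtrx{N}_j := \bigl( (\vect{a}^1 + \sigma_1) h_j, \dotsc, (\vect{a}^p + \sigma_p) h_j \bigr) \prec \Phi_j$ for every $j \in J$. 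Splitting $\mtrx{N}_j = [\mtrx{N}_{j,1} | \mtrx{N}_{j,2}]$ so that $\mtrx{N}_{j,1}$ consists of the first $k$ columns, the hypothesis $f \vartriangleright (\Phi_j, \Phi'_j)$ yields $[f \mtrx{N}_{j,1} | \mtrx{N}_{j,2}] \prec \Phi'_j$ for every $j \in J$.

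The crux is to recognize that these facts are exactly the antecedent of the extensive condition on $\Phi'$. Writing $a := (\vect{a}^1, \dotsc, \vect{a}^k) \colon m \to A^k$ and $s := (\sigma_1, \dotsc, \sigma_k) \colon V \to A^k$ for the corresponding vector-valued maps, whose domains $m$ and $V$ are disjoint, distributivity of composition over sums gives $f(a + s) = fa + fs = f\mtrx{M}_1 + \tau$, where $\tau \in A^V$ is the Skolem map $v \mapsto f\bigl( \sigma_1(v), \dotsc, \sigma_k(v) \bigr)$. Since the vector-valued map underlying $\mtrx{N}_{j,1}$ is precisely $(a + s) h_j$, associativity of concatenation yields
\[
f \mtrx{N}_{j,1} = f\bigl( (a + s) h_j \bigr) = \bigl( f(a + s) \bigr) h_j = (f\mtrx{M}_1 + \tau) h_j .
\]
Substituting this identity, the relation $[f \mtrx{N}_{j,1} | \mtrx{N}_{j,2}] \prec \Phi'_j$ becomes $\bigl( (f\mtrx{M}_1 + \tau) h_j, (\vect{a}^{k+1} + \sigma_{k+1}) h_j, \dotsc, (\vect{a}^p + \sigma_p) h_j \bigr) \prec \Phi'_j$, valid for all $j \in J$. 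This is the hypothesis of the extensive conjunctive minor condition for $\Phi'$ applied to the matrix $[f\mtrx{M}_1 | \mtrx{M}_2]$ with witnessing Skolem maps $\tau, \sigma_{k+1}, \dotsc, \sigma_p$; hence $[f\mtrx{M}_1 | \mtrx{M}_2] \prec \Phi'$, as desired. The degenerate case in which $\mtrx{M}_2$ is empty is covered by the same argument with the multiset part empty.

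The step I expect to be the main obstacle is the identification $f \mtrx{N}_{j,1} = (f\mtrx{M}_1 + \tau) h_j$: one must track carefully which maps compose genuinely and which are concatenations, verify that no restriction is silently lost in $(a + s) h_j$ (all the relevant codomains coincide with the following domains, so concatenation agrees with composition throughout), and, when invoking the extensive condition, make sure that the convention for $\prec \Phi'_j$ --- first column as the point, the remaining columns forming a multiset --- matches on both sides. An off-by-one in the partition of the columns of $\mtrx{M}$ between $\mtrx{M}_1$ and $\mtrx{M}_2$, or conflating the action of $h_j$ on tuples with its action on vector-valued maps, is the easiest slip to make here; everything else is routine bookkeeping once these correspondences are pinned down.
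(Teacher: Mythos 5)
Your proposal is correct and follows essentially the same route as the paper's proof: use the restrictive minor condition to obtain Skolem maps $\sigma_1,\dotsc,\sigma_p$, apply the hypothesis $f \vartriangleright (\Phi_j,\Phi'_j)$ to the resulting matrices, and feed the extensive minor condition the Skolem map $f(\sigma_1,\dotsc,\sigma_k)$, justified by distributivity of composition over piecewise sums and associativity of concatenation. The only difference is cosmetic: you carry the trailing block $\mtrx{M}_2$ and its Skolem maps $\sigma_{k+1},\dotsc,\sigma_p$ explicitly, which the paper's write-up leaves implicit.
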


\begin{proof}
Let $(\Phi, \Phi')$ be an $m$-ary conjunctive minor of the family $(\Phi_j, \Phi'_j)_{j \in J}$ via the scheme $H := (h_j)_{j \in J}$, $h_j \colon n_j \to m \cup V$. Let $\mtrx{M} := (\vect{a}^1, \dotsc, \vect{a}^n)$ be an arbitrary $m \times n$ matrix such that $\mtrx{M} \prec \Phi$. We want to prove that $f \mtrx{M} \prec \Phi'$. Since $\Phi$ is a restrictive conjunctive minor of $(\Phi_j)_{j \in J}$ via $H = (h_j)_{j \in J}$, there exist Skolem maps $\sigma_i \colon V \to A$, $1 \leq i \leq n$, such that for every $j \in J$, $\mtrx{M}_j := \bigl( (\vect{a}^1 + \sigma_1) h_j, \dotsc, (\vect{a}^n + \sigma_n) h_j \bigr) \prec \Phi_j$.

Since $\Phi'$ is an extensive conjunctive minor of $(\Phi'_j)_{j \in J}$ via the same scheme $H = (h_j)_{j \in J}$, to prove that $f \mtrx{M} \prec \Phi'$, it suffices to give a Skolem map $\sigma \colon V \to A$ such that, for all $j \in J$, $(f \mtrx{M} + \sigma) h_j \prec \Phi'_j$. Let $\sigma := f(\sigma_1, \dotsc, \sigma_n)$. We have that, for each $j \in J$,
\begin{align*}
(f \mtrx{M} + \sigma) h_j
&= \bigl( f(\vect{a}^1, \dotsc, \vect{a}^n) + f(\sigma_1, \dotsc, \sigma_n) \bigr) h_j \\
&= \bigl( f(\vect{a}^1 + \sigma_1, \dotsc, \vect{a}^n + \sigma_n) \bigr) h_j \\
&= f \bigl( (\vect{a}^1 + \sigma_1) h_j, \dotsc, (\vect{a}^n + \sigma_n) h_j \bigr)
= f \mtrx{M}_j.
\end{align*}
By our assumption $f \vartriangleright (\Phi_j, \Phi'_j)$, so we have $f \mtrx{M}_j \prec \Phi'_j$.
\end{proof}

We say that a set $\cl{W} \subseteq \cl{W}_A$ of systems of pointed multisets is \emph{closed under formation of conjunctive minors} if whenever $(\Phi_j, \Phi'_j)_{j \in J}$ is a nonempty family of members of $\cl{W}$, all conjunctive minors of the family $(\Phi_j, \Phi'_j)_{j \in J}$ are also in $\cl{W}$.

Let $(\Phi, \Phi'), (\Psi, \Psi') \in \cl{W}_A^{(m)}$. If $\Phi \subseteq \Psi$ and $\Phi' = \Psi'$, then we say that $(\Phi, \Phi')$ is obtained from $(\Psi, \Psi')$ by \emph{restricting the antecedent.} If $\Phi = \Psi$ and $\Phi' \supseteq \Psi'$, then we say that $(\Phi, \Phi')$ is obtained from $(\Psi, \Psi')$ by \emph{extending the consequent.}
The formation of conjunctive minors subsumes the formation of simple minors as well as the operations of restricting the antecedent and extending the consequent. Simple minors in turn subsume permutation of arguments, projection, identification of arguments, and addition of a dummy argument, operations which can be defined for systems of pointed multisets in an analogous way as for Pippenger's~\cite{Pippenger} constraints or Hellerstein's~\cite{Hellerstein} generalized constraints.

The $m$-ary \emph{trivial system of pointed multisets on $A$} is $\Omega_m := (\mathcal{M}(A^m), A^m \times \mathcal{M}(A^m))$.
For $p \geq 0$, the $m$-ary \emph{trivial system of pointed multisets on $A$ of breadth $p$} is $\Omega_m^{(p)} := (\mathcal{M}^{(p)}(A^m), A^m \times \mathcal{M}^{(p-1)}(A^m))$.
The $m$-ary \emph{empty system} on $A$ is the pair $(\emptyset, \emptyset)$. Note that $\Omega_m^{(0)} \neq \emptyset$, because $(\{\varepsilon\}, \emptyset)$ is the unique member of $\Omega_m^{(0)}$.
The $m$-ary \emph{equality system} on $A$, denoted $\mathbf{E}_m$, is the system $\mathbf{E}_m := (E_m, E'_m)$, where
\begin{align*}
E_m & := \{S \in \mathcal{M}(A^m) \mid \nu_S(a_1, \dots, a_m) \neq 0 \implies a_1 = \dots = a_m\}, \\
E'_m & := \{(a, \dots, a) \in A^m \mid a \in A\} \times E_m.
\end{align*}

\begin{lemma}
\label{lemma:all}
Let $\cl{W} \subseteq \cl{W}_A$ be a set of systems of pointed multisets that contains the binary equality system and the unary empty system. If $\cl{W}$ is closed under formation of conjunctive minors, then it contains all trivial systems, all equality systems, and all empty systems.
\end{lemma}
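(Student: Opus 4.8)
The plan is to exhibit each system named in the conclusion as a conjunctive minor of a suitable, possibly repeated, family drawn from the two-element set $\{\mathbf{E}_2, (\emptyset,\emptyset)\} \subseteq \cl{W}$, and then to invoke the hypothesis that $\cl{W}$ is closed under formation of conjunctive minors, recalling that this closure subsumes the formation of simple minors. I would dispose of the trivial systems first, since $\mathbf{E}_1$ turns out to coincide with $\Omega_1$; then the remaining equality systems; and finally the empty systems. The only step that calls for any real idea is the one producing the trivial systems.

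\emph{Trivial systems.} Fix $m \ge 1$ and consider the minor formation scheme $H := (h_0)$ with singleton index set $J := \{0\}$, empty indeterminate set, and $h_0 \colon \{0,1\} \to m$ the constant map $h_0(0) = h_0(1) = 0$. For every $m \times n$ matrix $\mtrx{M} = (\vect{a}^1, \dots, \vect{a}^n)$ the matrix $(\vect{a}^1 h_0, \dots, \vect{a}^n h_0)$ has $i$-th column the diagonal pair $(a^i_0, a^i_0)$, hence lies $\prec E_2$ and $\prec E'_2$ no matter what $\mtrx{M}$ is. Consequently, in the equivalences defining the simple minor of $\mathbf{E}_2$ via $H$ the existential clauses hold for every matrix, so the antecedent of that simple minor is all of $\mathcal{M}(A^m)$ and its consequent is all of $A^m \times \mathcal{M}(A^m)$; in other words, the simple minor of $\mathbf{E}_2$ via $H$ is exactly $\Omega_m$. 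Thus $\Omega_m \in \cl{W}$ for every $m \ge 1$. Since the defining condition of $E_1$ is the vacuous ``$a_1 = a_1$'', we have $\mathbf{E}_1 = \Omega_1$, and so $\mathbf{E}_1 \in \cl{W}$ as well.

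\emph{Equality systems of arity at least two.} Fix $m \ge 2$ and take the scheme $H := (h_j)_{j \in J}$ with $J := \{1, \dots, m-1\}$, empty indeterminate set, and $h_j \colon \{0,1\} \to m$ given by $h_j(0) = 0$, $h_j(1) = j$. For an $m \times n$ matrix $\mtrx{M} = (\vect{a}^1, \dots, \vect{a}^n)$ the matrix $\mtrx{M}_j := (\vect{a}^1 h_j, \dots, \vect{a}^n h_j)$ has $i$-th column $(a^i_0, a^i_j)$, so $\mtrx{M}_j \prec E_2$, equivalently $\mtrx{M}_j \prec E'_2$, holds precisely when $a^i_0 = a^i_j$ for all $i$; while $\mtrx{M} \prec E_m$, equivalently $\mtrx{M} \prec E'_m$, holds precisely when every column of $\mtrx{M}$ is constant, i.e. when $a^i_0 = a^i_j$ for all $i$ and all $j \in J$. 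Hence $\mtrx{M} \prec E_m \iff \forall j \in J \colon \mtrx{M}_j \prec E_2$ and $\mtrx{M} \prec E'_m \iff \forall j \in J \colon \mtrx{M}_j \prec E'_2$, which says exactly that $\mathbf{E}_m$ is the tight conjunctive minor, via $H$, of the family consisting of $m-1$ copies of $\mathbf{E}_2$. By closure under conjunctive minors, $\mathbf{E}_m \in \cl{W}$; together with the previous paragraph this gives $\mathbf{E}_m \in \cl{W}$ for all $m \ge 1$.

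\emph{Empty systems.} Fix $m \ge 1$ and take any scheme $H := (h_0)$ with $J := \{0\}$, empty indeterminate set, and $h_0 \colon 1 \to m$. Since no matrix is $\prec \emptyset$, the clause ``$\mtrx{M}_0 \prec \emptyset$'' never holds, so the empty antecedent $\emptyset$ is (vacuously) a restrictive conjunctive minor of the singleton family $(\emptyset)$ via $H$, and the empty consequent $\emptyset$ is (vacuously) an extensive conjunctive minor of $(\emptyset)$ via $H$. As the pair $(\emptyset, \emptyset) \in \cl{W}_A^{(m)}$ trivially satisfies conditions (1) and (2), it is a conjunctive minor of the unary empty system, whence the $m$-ary empty system lies in $\cl{W}$ for every $m \ge 1$. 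This settles all three parts; as anticipated, the crux is the trivial-systems step, where one exploits that collapsing both coordinates of the binary equality system onto a single row turns its antecedent constraint into one satisfied by every matrix, forcing the resulting simple minor to be the full system $\Omega_m$.
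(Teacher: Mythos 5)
Your proof is correct and takes essentially the same route as the paper: each of the trivial, equality, and empty systems is exhibited as a (simple or conjunctive) minor of $\mathbf{E}_2$ or of the unary empty system via an explicit scheme, and closure under conjunctive minors is then invoked. The only deviations are cosmetic --- you produce $\Omega_m$ in a single step (where the paper goes through $\Omega_1$ first) and use the star-shaped scheme $h_j(0)=0$, $h_j(1)=j$ for $\mathbf{E}_m$ instead of the paper's chain $h_i(0)=i$, $h_i(1)=i+1$ --- and these do not change the argument.
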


\begin{proof}
The unary trivial system is a simple minor of the binary equality system via the scheme $H := \{h\}$, where $h \colon 2 \to 1$ is given by $h(0) = h(1) = 0$ (by identification of arguments). The $m$-ary trivial system is a simple minor of the unary trivial system via the scheme $H := \{h\}$, where $h \colon 1 \to m$ is given by $h(0) = 0$ (by addition of $m-1$ dummy arguments).

For $m \geq 2$, the $m$-ary equality system is a conjunctive minor of the binary equality system via the scheme $H := (h_i)_{i \in m - 1}$, where $h_i \colon 2 \to m$ is given by $h_i(0) = i$, $h_i(1) = i + 1$ (by addition of $n-2$ dummy arguments, restricting the antecedents and intersecting the consequents).

The $m$-ary empty system is a simple minor of the unary empty system via the scheme $H := \{h\}$, where $h \colon 1 \to m$ is given by $h(0) = 0$ (by addition of $m-1$ dummy arguments).
\end{proof}

We define the union of systems of pointed multisets componentwise, i.e., if $(\Phi_j, \Phi'_j)_{j \in J}$ is a family of pointed multisets on $A$ of a common arity $m$, then the \emph{union} of $(\Phi_j, \Phi'_j)_{j \in J}$ is
\[
\bigcup_{j \in J} (\Phi_j, \Phi'_j) := \bigl(\bigcup_{j \in J} \Phi_j, \bigcup_{j \in J} \Phi'_j \bigr).
\]

\begin{lemma}
\label{lemma:unions}
Let $(\Phi_j, \Phi'_j)_{j \in J}$ be a nonempty family of $m$-ary systems of pointed multisets on $A$. If $f \colon A^n \to A$ preserves $(\Phi_j, \Phi'_j)$ for every $j \in J$, then $f$ preserves $\bigcup_{j \in J} (\Phi_j, \Phi'_j)$.
\end{lemma}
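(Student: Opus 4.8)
The plan is to argue directly from the definition of the preservation relation $\vartriangleright$ between operations and systems of pointed multisets, essentially just unwinding notation. First I would check that the componentwise union $(\Phi, \Phi') := \bigl( \bigcup_{j \in J} \Phi_j, \bigcup_{j \in J} \Phi'_j \bigr)$ is again a member of $\cl{W}_A^{(m)}$, so that the phrase ``$f$ preserves $\bigcup_{j \in J}(\Phi_j, \Phi'_j)$'' is meaningful: if $(x, S) \in \Phi'$, then $(x, S) \in \Phi'_k$ for some $k \in J$, whence condition (1) for $(\Phi_k, \Phi'_k)$ gives $(x, S') \in \Phi'_k \subseteq \Phi'$ for every $S' \subseteq S$, and condition (2) gives $\{x\} \uplus S \in \Phi_k \subseteq \Phi$; so $(\Phi, \Phi')$ satisfies conditions (1) and (2).

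Next, assume $f \in \cl{O}_A^{(n)}$ preserves $(\Phi_j, \Phi'_j)$ for every $j \in J$, and let $\mtrx{M} \in A^{m \times p}$ (for some $p \geq 0$) be a matrix with $\mtrx{M} \prec \Phi$, decomposed as $\mtrx{M} = [\mtrx{M}_1 | \mtrx{M}_2]$ where $\mtrx{M}_1$ has $n$ columns and $\mtrx{M}_2$ may be empty. By the convention of Section~\ref{sec:operations}, $\mtrx{M} \prec \Phi$ means $\mtrx{M}^* \in \Phi = \bigcup_{j \in J} \Phi_j$, so $\mtrx{M}^* \in \Phi_k$ for some $k \in J$, i.e.\ $\mtrx{M} \prec \Phi_k$. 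Since $f \vartriangleright (\Phi_k, \Phi'_k)$ and $\mtrx{M} = [\mtrx{M}_1 | \mtrx{M}_2]$ with $\mtrx{M}_1$ having $n$ columns, we obtain $[f \mtrx{M}_1 | \mtrx{M}_2] \prec \Phi'_k$, and since $\Phi'_k \subseteq \Phi'$ this gives $[f \mtrx{M}_1 | \mtrx{M}_2] \prec \Phi'$. As $\mtrx{M}$ and its decomposition were arbitrary, $f \vartriangleright (\Phi, \Phi')$, as required.

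I do not expect any real obstacle: the core of the argument is a single step observing that the column multiset of the witnessing matrix lands in one of the $\Phi_j$, after which the hypothesis for that index applies verbatim and the output lands in the corresponding $\Phi'_j \subseteq \Phi'$. The only points deserving a word of care are that unions of systems are again systems (done above) and the correct reading of $\mtrx{M} \prec \Phi$ and $[f \mtrx{M}_1 | \mtrx{M}_2] \prec \Phi'$ via the column-multiset and pointed-multiset conventions. (The nonemptiness of $J$ is used only to keep the hypothesis non-vacuous; if $J = \emptyset$ then $\Phi = \emptyset$ and every $f$ preserves the empty system trivially.)
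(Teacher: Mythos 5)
Your proof is correct and follows essentially the same route as the paper: the witnessing matrix's column multiset lies in some $\Phi_k$, the hypothesis $f \vartriangleright (\Phi_k, \Phi'_k)$ applies, and $\Phi'_k \subseteq \bigcup_{j \in J} \Phi'_j$ finishes the argument. Your extra check that the componentwise union is again a system of pointed multisets is a harmless (and reasonable) addition that the paper leaves implicit.
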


\begin{proof}
Let $\mtrx{M} := [\mtrx{M}_1 | \mtrx{M}_2] \prec \bigcup_{j \in J} \Phi_j$. Then $\mtrx{M} \prec \Phi_j$ for some $j \in J$. By the assumption that $f \vartriangleright (\Phi_j, \Phi'_j)$, we have $[f \mtrx{M}_1 | \mtrx{M}_2] \prec \Phi'_j$, and hence $[f \mtrx{M}_1 | \mtrx{M}_2] \prec \bigcup_{j \in J} \Phi'_j$.
\end{proof}

Let $\Phi \subseteq \mathcal{M}(A^m)$, $\Phi' \subseteq A^m \times \mathcal{M}(A^m)$, $S \in \mathcal{M}(A^m)$. The \emph{quotient} of the set $\Phi$ of multisets by the multiset $S$ is defined as
\[
\Phi / S := \{S' \in \mathcal{M}(A^m) \mid S \uplus S' \in \Phi\}.
\]
The \emph{quotient} of the set $\Phi'$ of pointed multisets by the multiset $S$ is defined as
\[
\Phi' / S := \{(x, S') \in A^m \times \mathcal{M}(A^m) \mid (x, S \uplus S') \in \Phi'\}.
\]
The \emph{quotient} of the pair $(\Phi, \Phi')$ by $S$ is defined componentwise, i.e.,
$(\Phi, \Phi') / S \linebreak[0] := (\Phi / S, \Phi' / S)$.
It is easy to verify that if $(\Phi, \Phi')$ is a system of pointed multisets on $A$, then so it $(\Phi, \Phi') / S$ for every $S \in \mathcal{M}(A^m)$.

\begin{lemma}
\label{lemma:quotdistr}
Let $\Phi, \Psi \subseteq \cl{M}(A^m)$ and $S \in \mathcal{M}(A^m)$. Then
\begin{enumerate}[\rm(i)]
\item $X \in \Phi / S$ if and only if $X \uplus S \in \Phi$;
\item $(\Phi \cup \Psi) / S = (\Phi / S) \cup (\Psi / S)$.
\end{enumerate}
\end{lemma}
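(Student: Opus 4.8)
The plan is to unwind the definition of the multiset quotient, with part~(i) doing essentially all the work and part~(ii) following as a formal consequence.

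For part~(i), I would simply recall that $\Phi / S$ was \emph{defined} as $\{S' \in \mathcal{M}(A^m) \mid S \uplus S' \in \Phi\}$. Hence $X \in \Phi / S$ means by definition that $S \uplus X \in \Phi$. Since the multiplicity function of a join is the pointwise sum of multiplicity functions, $\uplus$ is commutative, so $S \uplus X = X \uplus S$, and therefore $X \in \Phi / S$ if and only if $X \uplus S \in \Phi$. There is no obstacle here; it is a matter of spelling out the definition together with commutativity of $\uplus$ (already noted after the definition of $\uplus$ in Section~\ref{sec:operations}).

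For part~(ii), I would reason through the following chain of equivalences for an arbitrary $X \in \mathcal{M}(A^m)$: by part~(i), $X \in (\Phi \cup \Psi)/S$ iff $X \uplus S \in \Phi \cup \Psi$; by the definition of set union this holds iff $X \uplus S \in \Phi$ or $X \uplus S \in \Psi$; applying part~(i) again (once to $\Phi$ and once to $\Psi$), this holds iff $X \in \Phi/S$ or $X \in \Psi/S$; and finally, again by the definition of union, iff $X \in (\Phi/S) \cup (\Psi/S)$. Since $X$ was arbitrary, the two sets coincide.

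I do not anticipate any real difficulty; the only thing to be slightly careful about is the bookkeeping of which side of $\uplus$ the multiset $S$ sits on, which is handled once and for all by commutativity of $\uplus$ in part~(i). Nothing about the structure of $\Phi$ or $\Psi$ (e.g.\ that they are parts of a genuine system of pointed multisets) is needed; the statement is purely about the set-theoretic operations on $\mathcal{M}(A^m)$.
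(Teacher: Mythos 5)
Your proof is correct and follows the same route as the paper: part (i) is immediate from the definition of the quotient (with commutativity of $\uplus$ handling the side on which $S$ appears), and part (ii) is exactly the paper's chain of equivalences using part (i) and the definition of union.
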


\begin{proof}
(i) Immediate from the definition.

(ii) By part (i) and the definition of union, we have
\begin{multline*}
X \in (\Phi \cup \Psi) / S
\Longleftrightarrow
X \uplus S \in \Phi \cup \Psi
\Longleftrightarrow
X \uplus S \in \Phi \vee X \uplus S \in \Psi \\
\Longleftrightarrow
X \in \Phi / S \vee X \in \Psi / S
\Longleftrightarrow
X \in (\Phi / S) \cup (\Psi / S).
\end{multline*}
The claimed equality thus follows.
\end{proof}

\begin{lemma}
\label{lemma:quotients}
Let $(\Phi, \Phi')$ be an $m$-ary system of pointed multisets on $A$. If $f \colon A^n \to A$ preserves $(\Phi, \Phi')$, then $f$ preserves $(\Phi, \Phi') / S$ for every multiset $S \in \mathcal{M}(A^m)$.
\end{lemma}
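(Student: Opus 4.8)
The plan is to reduce preservation of the quotient $(\Phi, \Phi') / S = (\Phi/S, \Phi'/S)$ back to preservation of $(\Phi, \Phi')$ itself, by ``materializing'' the multiset $S$ as the column multiset of an auxiliary matrix and padding with it. So first I fix, once and for all, a matrix $\mtrx{S} \in A^{m \times \card{S}}$ with $\mtrx{S}^* = S$; such a matrix exists since $S$ is a finite multiset on $A^m$ (if $S = \varepsilon$ then $\mtrx{S}$ is the empty matrix and $(\Phi,\Phi')/S = (\Phi,\Phi')$, so there is nothing to do). Recall also that $(\Phi,\Phi')/S \in \cl{W}_A^{(m)}$, so the assertion ``$f$ preserves $(\Phi,\Phi')/S$'' is meaningful.

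The main step is then a routine unwinding of the definitions. Let $\mtrx{N} := [\mtrx{N}_1 | \mtrx{N}_2] \in A^{m \times p}$ be arbitrary with $\mtrx{N}_1$ having $n$ columns and $\mtrx{N} \prec \Phi/S$; I must show $[f\mtrx{N}_1 | \mtrx{N}_2] \prec \Phi'/S$. By Lemma~\ref{lemma:quotdistr}(i), $\mtrx{N} \prec \Phi/S$ exactly says $\mtrx{N}^* \uplus S \in \Phi$, equivalently $[\mtrx{N}_1 | \mtrx{N}_2 | \mtrx{S}]^* \in \Phi$, i.e.\ $[\mtrx{N}_1 | \mtrx{N}_2 | \mtrx{S}] \prec \Phi$. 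Now apply the hypothesis $f \vartriangleright (\Phi, \Phi')$ to this matrix, decomposed as $[\mtrx{N}_1 | (\mtrx{N}_2 | \mtrx{S})]$ — the first block still has exactly $n$ columns and the second is allowed to be longer (or empty) — to conclude $[f\mtrx{N}_1 | \mtrx{N}_2 | \mtrx{S}] \prec \Phi'$.

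It remains to translate this last statement. By the convention relating matrices to pointed multisets, $[f\mtrx{N}_1 | \mtrx{N}_2 | \mtrx{S}] \prec \Phi'$ means $(f\mtrx{N}_1,\, \mtrx{N}_2^* \uplus \mtrx{S}^*) \in \Phi'$, and since $\mtrx{S}^* = S$ this reads $(f\mtrx{N}_1,\, \mtrx{N}_2^* \uplus S) \in \Phi'$; by the definition of the quotient $\Phi'/S$ this is precisely $(f\mtrx{N}_1,\, \mtrx{N}_2^*) \in \Phi'/S$, i.e.\ $[f\mtrx{N}_1 | \mtrx{N}_2] \prec \Phi'/S$, as desired. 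I do not expect any genuine obstacle: the entire content is bookkeeping — producing the matrix $\mtrx{S}$, keeping the boundary between $\mtrx{N}_1$ and the rest fixed while enlarging the second block, and carefully matching the two slightly different conventions under which ``$\prec \Phi$'' (column multiset membership) and ``$\prec \Phi'$'' (head–tail pointed-multiset membership) are used.
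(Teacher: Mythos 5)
Your proof is correct and follows essentially the same route as the paper: realize $S$ as the column multiset of an auxiliary matrix, append it to the given matrix, apply $f \vartriangleright (\Phi, \Phi')$ with the same first block of $n$ columns, and read off membership in $\Phi'/S$. The only difference is that you spell out the bookkeeping (and the harmless $S = \varepsilon$ case) that the paper leaves implicit.
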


\begin{proof}
Let $[\mtrx{M}_1 | \mtrx{M}_2] \prec \Phi / S$. Let $\mtrx{N}$ be a matrix such that $\mtrx{N}^* = S$. Then $[\mtrx{M}_1 | \mtrx{M}_2 | \mtrx{N}] \prec \Phi$. By our assumption that $f \vartriangleright (\Phi, \Phi')$, we have $[f \mtrx{M}_1 | \mtrx{M}_2 | \mtrx{N}] \prec \Phi'$. Thus, $[f \mtrx{M}_1 | \mtrx{M}_2] \prec \Phi' / S$, and we conclude that $f \vartriangleright (\Phi, \Phi') / S$.
\end{proof}

\begin{lemma}
\label{lemma:dividends}
Assume that $(\Phi, \Phi')$ is an $m$-ary system on $A$ such that $\Omega_m^{(p)} \subseteq (\Phi, \Phi')$. If $f \colon A^n \to A$ preserves all quotients $(\Phi, \Phi') / S$ where $\card{S} \geq p$, then $f$ preserves $(\Phi, \Phi')$.
\end{lemma}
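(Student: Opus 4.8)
The plan is to verify the preservation condition $f \vartriangleright (\Phi, \Phi')$ directly from the definitions, with a case distinction on the number of columns in the ``trailing block'' of a witnessing matrix. So I would fix an $n$-ary operation $f$ that preserves every quotient $(\Phi, \Phi') / S$ with $\card{S} \geq p$, take an arbitrary matrix $\mtrx{M} = [\mtrx{M}_1 | \mtrx{M}_2] \prec \Phi$ over $A^m$ with $\mtrx{M}_1$ having $n$ columns, and write $\ell$ for the number of columns of $\mtrx{M}_2$. The goal is $[f\mtrx{M}_1 | \mtrx{M}_2] \prec \Phi'$, that is, $(f\mtrx{M}_1, \mtrx{M}_2^*) \in \Phi'$.

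In the first case, suppose $\ell \leq p-1$. Then the pointed multiset $(f\mtrx{M}_1, \mtrx{M}_2^*)$ has cardinality $\card{\mtrx{M}_2^*} + 1 = \ell + 1 \leq p$, so it lies in $A^m \times \mathcal{M}^{(p-1)}(A^m)$, which is contained in $\Phi'$ because $\Omega_m^{(p)} \subseteq (\Phi, \Phi')$; this settles the case. Note that this is the only place where the hypothesis $\Omega_m^{(p)} \subseteq (\Phi, \Phi')$ is used, and that only its consequent part $A^m \times \mathcal{M}^{(p-1)}(A^m) \subseteq \Phi'$ is needed — precisely to handle the matrices whose trailing block is too short to be peeled off by a quotient.

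In the second case, suppose $\ell \geq p$, and put $S := \mtrx{M}_2^*$, so that $\card{S} = \ell \geq p$ and hence $f$ preserves $(\Phi, \Phi') / S = (\Phi / S, \Phi' / S)$ by assumption. Since $\mtrx{M}_1^* \uplus S = \mtrx{M}^* \in \Phi$, Lemma~\ref{lemma:quotdistr}(i) gives $\mtrx{M}_1 \prec \Phi / S$; applying the preservation of $(\Phi / S, \Phi' / S)$ by $f$ to $\mtrx{M}_1$ (with the first block comprising all $n$ columns and an empty trailing block) yields $[f\mtrx{M}_1] \prec \Phi' / S$, i.e., $(f\mtrx{M}_1, \varepsilon) \in \Phi' / S$. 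By the definition of the quotient of a set of pointed multisets, this is the same as $(f\mtrx{M}_1, S) \in \Phi'$, that is, $(f\mtrx{M}_1, \mtrx{M}_2^*) \in \Phi'$, which is what we wanted. As the two cases are exhaustive, $f \vartriangleright (\Phi, \Phi')$.

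I do not anticipate a genuine obstacle here; the argument is bookkeeping with the definitions of $\vartriangleright$, of quotients, and of the trivial system of breadth $p$. The one thing to get right is the threshold of the case split: a single quotient removes at least $p$ columns at a stroke, so it cannot touch matrices whose trailing block has fewer than $p$ columns, and it is exactly those that force us to invoke the breadth-$p$ triviality of $(\Phi, \Phi')$. I would also sanity-check the degenerate value $p = 0$, where $\mathcal{M}^{(-1)}(A^m) = \emptyset$, $\Omega_m^{(0)} = (\{\varepsilon\}, \emptyset)$, the first case is vacuous, and the second case with $S = \varepsilon$ merely restates the hypothesis applied to $(\Phi, \Phi') / \varepsilon = (\Phi, \Phi')$ itself.
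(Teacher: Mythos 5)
Your proof is correct and follows essentially the same route as the paper's: split on whether the trailing block $\mtrx{M}_2$ has fewer than $p$ columns (then use the consequent of $\Omega_m^{(p)} \subseteq (\Phi,\Phi')$) or at least $p$ columns (then pass to the quotient by $\mtrx{M}_2^*$ and apply the hypothesis). The extra remarks on where the breadth-$p$ hypothesis is used and on the degenerate case $p=0$ are fine but not needed.
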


\begin{proof}
Let $[\mtrx{M}_1 | \mtrx{M}_2] \prec \Phi$, where $\mtrx{M}_1$ has $n$ columns and $\mtrx{M}_2$ has $n'$ columns. If $n' < p$, then the number of columns of $[f \mtrx{M}_1 | \mtrx{M}_2]$ is $n' + 1 \leq p$, and hence $[f \mtrx{M}_1 | \mtrx{M}_2] \prec \Phi'$. Otherwise $n' \geq p$ and, by our assumption, $f \vartriangleright (\Phi, \Phi') / \mtrx{M}_2^*$. Thus, since $\mtrx{M}_1 \prec \Phi / \mtrx{M}_2^*$, we have that $f \mtrx{M}_1 \prec \Phi' / \mtrx{M}_2^*$. Therefore $[f \mtrx{M}_1 | \mtrx{M}_2] \prec \Phi'$, and we conclude that $f \vartriangleright (\Phi, \Phi')$.
\end{proof}

For an $m$-ary system $(\Phi, \Phi') \in \mathcal{W}_A^{(m)}$ and $p \geq 0$, set $(\Phi, \Phi')^{(p)} := (\Phi^{(p)}, \linebreak[0] \Phi'^{(p)})$, where
\begin{align*}
\Phi^{(p)} & := \Phi \cap \mathcal{M}^{(p)}(A^m), \\
\Phi'^{(p)} & := \Phi' \cap (A^m \times \mathcal{M}^{(p-1)}(A^m)),
\end{align*}
that is, $(\Phi, \Phi')^{(p)} := (\Phi, \Phi') \cap \Omega_m^{(p)}$, is obtained from $(\Phi, \Phi')$ by \emph{restricting the breadth to $p$.}

\begin{lemma}
\label{lemma:breadth}
Let $(\Phi, \Phi')$ be an $m$-ary system of pointed multisets on $A$. Then $f \colon A^n \to A$ preserves $(\Phi, \Phi')$ if and only if $f$ preserves $(\Phi, \Phi')^{(p)}$ for all $p \geq 0$.
\end{lemma}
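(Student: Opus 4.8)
The plan is to prove both implications directly from the definition of the preservation relation $\vartriangleright$ together with the definitions $\Phi^{(p)} := \Phi \cap \mathcal{M}^{(p)}(A^m)$ and $\Phi'^{(p)} := \Phi' \cap (A^m \times \mathcal{M}^{(p-1)}(A^m))$. The only bookkeeping to keep straight is that a matrix $\mtrx{N} \in A^{m \times k}$ satisfies $\card{\mtrx{N}^*} = k$, and that if $\mtrx{N} = [\mtrx{N}_1 | \mtrx{N}_2]$ with $\mtrx{N}_1$ having $n$ columns, then $[f\mtrx{N}_1 | \mtrx{N}_2]$ encodes the pointed multiset $(f\mtrx{N}_1, \mtrx{N}_2^*)$, whose multiset component $\mtrx{N}_2^*$ has cardinality $k - n$.

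For the forward implication I would assume $f \vartriangleright (\Phi, \Phi')$, fix $p \geq 0$, and take any matrix $\mtrx{N} = [\mtrx{N}_1 | \mtrx{N}_2] \prec \Phi^{(p)}$ with $\mtrx{N}_1$ of arity $n$. Writing $k$ for the number of columns of $\mtrx{N}$, we have $k = \card{\mtrx{N}^*} \leq p$ and $\mtrx{N} \prec \Phi$, so $[f\mtrx{N}_1 | \mtrx{N}_2] \prec \Phi'$. Since $n \geq 1$, the multiset component of this pointed multiset has cardinality $k - n \leq p - 1$, hence $[f\mtrx{N}_1 | \mtrx{N}_2] \prec \Phi' \cap (A^m \times \mathcal{M}^{(p-1)}(A^m)) = \Phi'^{(p)}$; thus $f \vartriangleright (\Phi, \Phi')^{(p)}$. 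When $p = 0$ there is no admissible $\mtrx{N}$ at all, since $k \geq n \geq 1 > 0$, so the conclusion is vacuous in that case and the argument above covers it uniformly.

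For the converse I would assume $f$ preserves $(\Phi, \Phi')^{(p)}$ for every $p \geq 0$ and take any matrix $\mtrx{N} = [\mtrx{N}_1 | \mtrx{N}_2] \prec \Phi$ with $\mtrx{N}_1$ of arity $n$. The key choice is to set $p$ equal to the number of columns of $\mtrx{N}$; then $p \geq n \geq 1$ and $\card{\mtrx{N}^*} = p$, so $\mtrx{N}^* \in \Phi \cap \mathcal{M}^{(p)}(A^m) = \Phi^{(p)}$, i.e.\ $\mtrx{N} \prec \Phi^{(p)}$. Applying the hypothesis $f \vartriangleright (\Phi, \Phi')^{(p)}$ to the decomposition $\mtrx{N} = [\mtrx{N}_1 | \mtrx{N}_2]$ yields $[f\mtrx{N}_1 | \mtrx{N}_2] \prec \Phi'^{(p)} \subseteq \Phi'$, and since $\mtrx{N}$ was arbitrary we conclude $f \vartriangleright (\Phi, \Phi')$.

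Both directions reduce to unwinding definitions, so I do not expect a genuine obstacle. The two points that require a little care are the inequality $k - n \leq p - 1$ in the forward direction --- this is exactly where the fact that every operation has arity at least $1$ is used --- and the choice of breadth $p$ in the converse, which must be at least the total number of columns of the test matrix so that $\mtrx{N}$ genuinely fits into $\Phi^{(p)}$.
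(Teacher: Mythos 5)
Your proof is correct and follows essentially the same route as the paper's: in the forward direction both arguments use $\Phi^{(p)} \subseteq \Phi$ together with the observation that $[f\mtrx{N}_1 \mid \mtrx{N}_2]$ has at most $p$ columns (equivalently, its multiset component has cardinality at most $p-1$), and in the converse both choose $p$ to be the number of columns of the test matrix and use $\Phi'^{(p)} \subseteq \Phi'$. Your explicit bookkeeping of $k - n \leq p - 1$ and the remark on the vacuous $p = 0$ case are just slightly more detailed renderings of the same steps.
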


\begin{proof}
Assume first that $f \vartriangleright (\Phi, \Phi')$. Let $[\mtrx{M}_1 | \mtrx{M}_2] \prec \Phi^{(p)}$. Since $\Phi^{(p)} \subseteq \Phi$, we have that $[\mtrx{M}_1 | \mtrx{M}_2] \prec \Phi$, and hence $[f \mtrx{M}_1 | \mtrx{M}_2] \prec \Phi'$ by our assumption. The number of columns of $[f \mtrx{M}_1 | \mtrx{M}_2]$ is at most $p$, so we have that $[f \mtrx{M}_1 | \mtrx{M}_2] \prec \Phi'^{(p)}$. Thus, $f \vartriangleright (\Phi, \Phi')^{(p)}$.

Assume then that $f \vartriangleright (\Phi, \Phi')^{(p)}$ for all $p \geq 0$. Let $\mtrx{M} := [\mtrx{M}_1 | \mtrx{M}_2] \prec \Phi$, and let $q$ be the number of columns in $\mtrx{M}$. Then $[\mtrx{M}_1 | \mtrx{M}_2] \prec \Phi^{(q)}$, and hence $[f \mtrx{M}_1 | \mtrx{M}_2] \prec \Phi'^{(q)}$ by our assumption. Since $\Phi'^{(q)} \subseteq \Phi'$, we have that $[f \mtrx{M}_1 | \mtrx{M}_2] \prec \Phi'$, and we conclude that $f \vartriangleright (\Phi, \Phi')$.
\end{proof}

We say that a set $\cl{W} \subseteq \cl{W}_A$ of systems of pointed multisets is
\begin{itemize}
\item \emph{closed under quotients,} if for any $(\Phi, \Phi') \in \cl{W}$, every quotient $(\Phi, \Phi') / S$ is also in $\cl{W}$;
\item \emph{closed under dividends,} if for every system $(\Phi, \Phi') \in \cl{W}_A$, say of arity $m$, it holds that $(\Phi, \Phi') \in \cl{W}$ whenever $\Omega_m^{(p)} \subseteq (\Phi, \Phi')$ and $(\Phi, \Phi') / S \in \cl{W}$ for every multiset $S$ on $A^m$ of cardinality at least $p$;
\item \emph{locally closed,} if $(\Phi, \Phi') \in \cl{W}$ whenever $(\Phi, \Phi')^{(p)} \in \cl{W}$ for all $p \geq 0$;
\item\emph{closed under unions,} if $\bigcup_{j \in J} (\Phi_j, \Phi'_j) \in \cl{W}$ whenever $(\Phi_j, \Phi'j)_{j \in J}$ is a nonempty family of $m$-ary systems in $\cl{W}$;
\item \emph{closed under formation of conjunctive minors,} if all conjunctive minors of nonempty families of members of $\cl{W}$ are members of $\cl{W}$.
\end{itemize}

\begin{theorem}
\label{thm:spms}
Let $A$ be an arbitrary, possibly infinite nonempty set. For any set $\cl{W} \subseteq \cl{W}_A$ of systems of pointed multisets on $A$, the following two conditions are equivalent:
\begin{enumerate}[\rm(i)]
\item $\cl{W}$ is locally closed and contains the binary equality system, the unary empty system, and all unary trivial systems of breadth $p \geq 0$, and it is closed under formation of conjunctive minors, unions, quotients, and dividends.
\item $\cl{W}$ is characterized by some set $\cl{F} \subseteq \cl{O}_A$ of operations.
\end{enumerate}
\end{theorem}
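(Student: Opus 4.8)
The implication $\text{(ii)}\Rightarrow\text{(i)}$ is routine, and I would dispatch it by assembling the lemmas already proved: for $\cl W=\Inv\cl F$, closure under formation of conjunctive minors, unions, quotients, dividends, and local closure are precisely the contents of Lemmas~\ref{lemma:cm}, \ref{lemma:unions}, \ref{lemma:quotients}, \ref{lemma:dividends}, \ref{lemma:breadth}. It then only remains to check directly that the binary equality system, the unary empty system, and every unary trivial system $\Omega_1^{(p)}$ lie in $\Inv\cl O_A$: the empty system is preserved vacuously; the equality system is preserved because applying an operation to a matrix whose columns are all constant $m$-tuples returns a constant $m$-tuple and leaves the remaining columns unchanged, so all rows stay constant; and $\Omega_1^{(p)}$ (indeed $\Omega_m^{(p)}$) is preserved because if $[\mtrx M_1|\mtrx M_2]$ has at most $p$ columns and $\mtrx M_1$ has $n\ge1$ columns, then $[f\mtrx M_1|\mtrx M_2]$ has at most $p$ columns.

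For $\text{(i)}\Rightarrow\text{(ii)}$ I would set $\cl F:=\{f\in\cl O_A\mid f\vartriangleright(\Phi,\Phi')\text{ for every }(\Phi,\Phi')\in\cl W\}$. The inclusion $\cl W\subseteq\Inv\cl F$ is immediate, and the whole theorem rests on the reverse inclusion $\Inv\cl F\subseteq\cl W$, i.e., on exhibiting, for each $(\Psi,\Psi')\in\cl W_A\setminus\cl W$, an operation $f_0\in\cl F$ with $f_0\not\vartriangleright(\Psi,\Psi')$. By Lemma~\ref{lemma:all}, $\cl W$ contains all equality systems, all empty systems, and all trivial systems $\Omega_m^{(q)}$ of every arity, which I would use freely. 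Two reductions simplify the target. Since $\cl W$ is locally closed, $(\Psi,\Psi')^{(p)}\notin\cl W$ for some least $p\ge1$, and by Lemma~\ref{lemma:breadth} an operation violating $(\Psi,\Psi')^{(p)}$ violates $(\Psi,\Psi')$, so I may assume $(\Psi,\Psi')$ has breadth $p$, with $(\Psi,\Psi')^{(p-1)}\in\cl W$. Writing the breadth-$p$ system componentwise as the union, over the multisets $S_0\in\Psi$ of cardinality $p$, of the systems obtained from $(\Psi,\Psi')^{(p-1)}$ by adjoining $S_0$ to the antecedent and the down-set $\{(x,S)\in\Psi'\mid\{x\}\uplus S=S_0\}$ to the consequent, closure under unions lets me assume $(\Psi,\Psi')$ is ``elementary'': it exceeds a member of $\cl W$ by a single cardinality-$p$ multiset $S_0$ and a down-set of pointed multisets with underlying multiset $S_0$.

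The heart of the argument is to attach an operation to this data. Realize $S_0$ as the column multiset of an $m\times p$ matrix $\mtrx U$ with rows $\vect r_1,\dots,\vect r_m\in A^p$; coincidences among the $\vect r_i$ are to be recovered at the end through the equality system $\mathbf{E}_m$, so I may take the $\vect r_i$ pairwise distinct. Since the elementary system is not one of the trivial systems of $\cl W$, its consequent necessarily omits some pointed multiset $(\vect x_0,S_*)$ with $\{\vect x_0\}\uplus S_*\subseteq S_0$ and $(\vect x_0,S_*)\notin\Psi'$ (this last point uses closure under quotients and dividends, Lemmas~\ref{lemma:quotients} and~\ref{lemma:dividends}, to rule out the alternative). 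Split $\mtrx U=[\mtrx U_1|\mtrx U_2]$ with $\mtrx U_2^*=S_*$, so $\mtrx U_1$ has $\ell:=p-\card{S_*}$ columns, and define $f_0\colon A^\ell\to A$ by $f_0(\text{$i$-th row of }\mtrx U_1):=\vect x_0(i)$, extending $f_0$ arbitrarily on the rest of $A^\ell$. Then $\mtrx U^*=S_0\in\Psi$, while $[f_0\mtrx U_1|\mtrx U_2]$, read as a pointed multiset, equals $(\vect x_0,S_*)\notin\Psi'$; hence $f_0\not\vartriangleright(\Psi,\Psi')$.

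It remains to verify $f_0\in\cl F$, and this is the step I expect to be the main obstacle. The plan is to argue by contradiction: if $f_0\not\vartriangleright(\Phi,\Phi')$ for some $(\Phi,\Phi')\in\cl W$, witnessed by a matrix $\mtrx N=[\mtrx N_1|\mtrx N_2]\prec\Phi$ with $[f_0\mtrx N_1|\mtrx N_2]\nprec\Phi'$, then the pattern by which the rows of $\mtrx N_1$ match the rows $\vect r_i$ of $\mtrx U_1$ (rows matching no $\vect r_i$ being absorbed into existentially quantified indeterminates, exactly as in the proof of Lemma~\ref{lemma:cm}) determines a minor formation scheme $H$; a conjunctive minor of $(\Phi,\Phi')$ via $H$, combined with copies of $\mathbf{E}_m$ to enforce the identifications among the $\vect r_i$ and then adjusted by restricting the antecedent, extending the consequent, and applying Lemmas~\ref{lemma:quotients} and~\ref{lemma:dividends}, can be arranged to reproduce $(\Psi,\Psi')$. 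Since $\cl W$ is closed under all of these operations, this would force $(\Psi,\Psi')\in\cl W$, contradicting the choice of $(\Psi,\Psi')$; hence $f_0\in\cl F$ and $f_0$ separates $(\Psi,\Psi')$. The delicate points, as is typical for arguments in the style of Couceiro and Foldes, are the precise construction of the minor formation scheme encoding an arbitrary coincidence pattern of rows, the placement of indeterminates and Skolem maps to neutralize the uncontrolled values of $f_0$, and checking that the chain of closure operations lands on $(\Psi,\Psi')$ itself rather than on a strictly larger system.
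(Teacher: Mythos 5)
Your direction (ii)$\Rightarrow$(i) is fine and essentially the paper's argument. The direction (i)$\Rightarrow$(ii), however, has a genuine gap at the point you yourself flag as the main obstacle, and also one step earlier. First, the claim that the offending system must omit a consequent element $(\vect{x}_0,S_*)$ whose \emph{underlying multiset is a submultiset of} $S_0$ is not justified by ``closure under quotients and dividends,'' and there is no reason the obstruction should be local to $S_0$: the closure conditions on $\cl{W}$ may force the consequent of any member of $\cl{W}$ whose antecedent contains $S_0$ to contain a pointed multiset $(\vect{s},\varepsilon)$ whose point $\vect{s}$ is \emph{not} a column of the matrix realizing $S_0$. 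This is exactly what the paper's Lemma~\ref{lem:Kinfty} establishes: after the minimality reductions (with respect to breadth, row identification, and quotients), the missing element is found of the form $(\vect{s},\varepsilon)\in\Upsilon'\setminus\Phi'$, where $(\Upsilon,\Upsilon')$ is the smallest member of $\cl{W}$ whose antecedent contains $Q$ and the comparison is made against the largest member $(\Psi,\Psi')$ of $\cl{W}$ below $(\Phi,\Phi')$, using unions, quotients, dividends, and the system $(\hat\Phi,\hat\Phi')=(\Phi,\Phi')\cup\Omega_m^{(1)}$. Your witness-extraction step therefore already needs this global argument, not a local one inside $S_0$.

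The decisive gap is the step ``$f_0\in\cl{F}$.'' Defining $f_0$ only on the rows of $\mtrx{U}_1$ and extending it \emph{arbitrarily} cannot work, and the proposed rescue fails: if a violating matrix $\mtrx{N}=[\mtrx{N}_1|\mtrx{N}_2]\prec\Phi$ for some $(\Phi,\Phi')\in\cl{W}$ has rows of $\mtrx{N}_1$ that match no row of $\mtrx{U}_1$, then $f_0\mtrx{N}_1$ depends on the uncontrolled values, and the indeterminates of a minor formation scheme cannot neutralize this dependence---Skolem maps are existentially quantified on the antecedent side and do not constrain what $f_0$ does on the image side. Consequently, from $f_0\not\vartriangleright(\Phi,\Phi')$ you cannot reconstruct $(\Psi,\Psi')$ as a conjunctive minor, quotient, union, or dividend of members of $\cl{W}$, so the desired contradiction with $(\Psi,\Psi')\notin\cl{W}$ never materializes; the failure may be entirely an artifact of the arbitrary extension. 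This is precisely why the paper introduces the infinitary apparatus that your plan omits: the separating function $g$ is defined on \emph{all} of $A^n$ at once by $g\mtrx{M}=\vect{u}$, where $\mtrx{M}$ has every $n$-tuple exactly once as a row, and $\vect{u}$ is an extension of $\vect{s}$ with $(\vect{u},\varepsilon)$ in the consequent of $(\Theta,\Theta')=\bigcap\{(P,P')\in\cl{W}^\infty\mid\mtrx{M}\prec P\}$; existence of $\vect{u}$ uses the projection back into $\cl{W}$ via Corollary~\ref{cor:spmsmc}, and preservation of every member of $\cl{W}^\infty$ is then proved by pulling any putative violation back, through a quotient and a simple $\infty$-minor along a row-matching map $h$, to contradict the choice of $\vect{u}$. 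Without this consistency mechanism (or an equivalent compactness argument), the key claim that the constructed function preserves all of $\cl{W}$ remains unproven, so the proposal does not establish (i)$\Rightarrow$(ii).
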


In order to prove Theorem~\ref{thm:spms}, we need to extend the notions of tuple and system of pointed multisets and allow them to have infinite arities, as will be explained below. Functions remain finitary. These extended definitions have no bearing on Theorem~\ref{thm:spms} itself; they are only needed as a tool in its proof.

For any nonzero, possibly infinite ordinal $m$ (an ordinal $m$ is the set of lesser ordinals), an $m$-tuple $\vect{a} \in A^m$ is formally a map $\vect{a} \colon m \to A$. The arities of tuples and systems of pointed multisets are thus allowed to be arbitrary nonzero, possibly infinite ordinals. In minor formation schemes, the target $m$ and the members $n_j$ of the source family are also allowed to be arbitrary nonzero, possibly infinite ordinals. For systems of pointed multisets, we shall use the terms \emph{restrictive conjunctive $\infty$-minor,} \emph{extensive conjunctive $\infty$-minor,} \emph{conjunctive $\infty$-minor} and \emph{simple $\infty$-minor} to indicate a restrictive conjunctive minor, an extensive conjunctive minor, a conjunctive minor, or a simple minor via a scheme whose target and source ordinals may be infinite or finite. Thus in the sequel the use of the term ``minor'' without the prefix ``$\infty$'' continues to mean the respective minor via a scheme whose target and source ordinals are all finite. Matrices can also have infinitely many rows but only a finite number of columns; an $m \times n$ matrix $\mtrx{M} \in A^{m \times n}$, where $n$ is finite but $m$ may be finite or infinite, is an $n$-tuple of $m$-tuples $\mtrx{M} := (\vect{a}^1, \dotsc, \vect{a}^n)$ where $\vect{a}^i \colon m \to A$ for $1 \leq i \leq n$.

Let $H := (h_j)_{j \in J}$ be a minor formation scheme with target $m$, indeterminate set $V$ and source family $(n_j)_{j \in J}$, and, for each $j \in J$, let $H_j := (h^i_j)_{i \in I_j}$ be a scheme with target $n_j$, indeterminate set $V_j$ and source family $(n^i_j)_{i \in I_j}$. Assume that $V$ is disjoint from the $V_j$'s, and for distinct $j$'s the $V_j$'s are also pairwise disjoint. Then the \emph{composite scheme} $H(H_j \mid j \in J)$ is the scheme $K := (k^i_j)_{j \in J,\, i \in I_j}$ defined as follows:
\begin{enumerate}[(i)]
\item the target of $K$ is the target $m$ of $H$,
\item the source family of $K$ is $(n^i_j)_{j \in J,\, i \in I_j}$,
\item the indeterminate set of $K$ is $U := V \cup (\bigcup_{j \in J} V_j)$,
\item $k^i_j \colon n^i_j \to m \cup U$ is defined by $k^i_j := (h_j + \iota_{U V_j}) h^i_j$, where $\iota_{U V_j}$ is the canonical injection (inclusion map) from $V_j$ to $U$.
\end{enumerate}

For a set $\cl{W}$ of systems of pointed multisets on $A$ of arbitrary, possibly infinite arities, we denote by $\cl{W}^\infty$ the set of those systems which are conjunctive $\infty$-minors of families of members of $\cl{W}$. This set $\cl{W}^\infty$ is the smallest set of systems of pointed multisets containing $\cl{W}$ which is closed under formation of conjunctive $\infty$-minors, and it is called the \emph{conjunctive $\infty$-minor closure} of $\cl{W}$. Considering the formation of repeated conjunctive $\infty$-minors, we can show that the following lemma and corollary hold; these are analogues of Couceiro and Foldes's Claim~1 and Fact~1 in the proof of Theorem~3.2 in~\cite{CF}.

\begin{lemma}
If $(\Phi, \Phi')$ is a conjunctive $\infty$-minor of a nonempty family $(\Phi_j, \Phi'_j)_{j \in J}$ of systems of pointed multisets on $A$ via the scheme $H$, and, for each $j \in J$, $(\Phi_j, \Phi'_j)$ is a conjunctive $\infty$-minor of a nonempty family $(\Phi_{ji}, \Phi'_{ji})_{i \in I_j}$ via the scheme $H_j$, then $(\Phi, \Phi')$ is a conjunctive $\infty$-minor of the nonempty family $(\Phi_{ji}, \Phi'_{ji})_{j \in J,\, i \in I_j}$ via the composite scheme $K := H(H_j \mid j \in J)$.
\end{lemma}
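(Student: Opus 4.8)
The plan is to verify the two halves of the definition of conjunctive $\infty$-minor separately: that $\Phi$ is a restrictive conjunctive $\infty$-minor of $(\Phi_{ji})_{j\in J,\,i\in I_j}$ via $K$, and that $\Phi'$ is an extensive conjunctive $\infty$-minor of $(\Phi'_{ji})_{j\in J,\,i\in I_j}$ via $K$. Write $h_j\colon n_j\to m\cup V$ for the components of $H$ and $h^i_j\colon n^i_j\to n_j\cup V_j$ for those of $H_j$, and recall that $K=(k^i_j)_{j\in J,\,i\in I_j}$ with $k^i_j=(h_j+\iota_{UV_j})h^i_j$ and $U=V\cup\bigcup_{j\in J}V_j$, where $V$ and the $V_j$, $j\in J$, are pairwise disjoint; since $J\neq\emptyset$ and each $I_j\neq\emptyset$, the family $(\Phi_{ji},\Phi'_{ji})_{j\in J,\,i\in I_j}$ is nonempty, as required.

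The computational heart of the argument is a single bookkeeping identity, which I would isolate first. Fix an $m\times n$ matrix $\mtrx{M}=(\vect{a}^1,\dots,\vect{a}^n)$, maps $\sigma_l\colon V\to A$, and maps $\tau_{jl}\colon V_j\to A$ for $j\in J$ and $1\le l\le n$, and set $\rho_l:=\sigma_l+\sum_{j\in J}\tau_{jl}\colon U\to A$ (a legitimate piecewise sum precisely because $V$ and the $V_j$ are pairwise disjoint) and $\vect{b}^l_j:=(\vect{a}^l+\sigma_l)h_j$. Using associativity of concatenation and its distributivity over piecewise sums, together with the fact that $\rho_l$ restricts to $\sigma_l$ on $V$ and to $\tau_{jl}$ on $V_j$, one checks that
\[
(\vect{a}^l+\rho_l)k^i_j=\bigl((\vect{a}^l+\rho_l)(h_j+\iota_{UV_j})\bigr)h^i_j=(\vect{b}^l_j+\tau_{jl})h^i_j
\]
for all $j\in J$, $i\in I_j$ and all $l$ with $1\le l\le n$. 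Checking that all the domains and codomains match up — of $h_j+\iota_{UV_j}$, of $k^i_j$, and of $\vect{a}^l+\rho_l$ — is the one genuinely fiddly part, but it is purely formal.

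Granting the identity, the restrictive direction would run as follows. Suppose $\mtrx{M}\prec\Phi$. Since $\Phi$ is a restrictive conjunctive $\infty$-minor of $(\Phi_j)_{j\in J}$ via $H$, there are Skolem maps $\sigma_l\colon V\to A$ such that $\mtrx{M}_j:=(\vect{b}^1_j,\dots,\vect{b}^n_j)\prec\Phi_j$ for every $j$, where $\vect{b}^l_j:=(\vect{a}^l+\sigma_l)h_j$. For each $j$, since $\Phi_j$ is a restrictive conjunctive $\infty$-minor of $(\Phi_{ji})_{i\in I_j}$ via $H_j$ and $\mtrx{M}_j\prec\Phi_j$, there are Skolem maps $\tau_{jl}\colon V_j\to A$ with $\bigl((\vect{b}^1_j+\tau_{j1})h^i_j,\dots,(\vect{b}^n_j+\tau_{jn})h^i_j\bigr)\prec\Phi_{ji}$ for all $i\in I_j$. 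Forming $\rho_l:=\sigma_l+\sum_{j\in J}\tau_{jl}$ and applying the identity then yields $\bigl((\vect{a}^1+\rho_1)k^i_j,\dots,(\vect{a}^n+\rho_n)k^i_j\bigr)\prec\Phi_{ji}$ for all $j\in J$, $i\in I_j$, i.e.\ exactly the restrictive condition for $\Phi$ via $K$, witnessed by the single tuple $(\rho_1,\dots,\rho_n)$.

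The extensive direction is dual. Suppose there are Skolem maps $\rho_l\colon U\to A$ with $\bigl((\vect{a}^1+\rho_1)k^i_j,\dots,(\vect{a}^n+\rho_n)k^i_j\bigr)\prec\Phi'_{ji}$ for all $j\in J$, $i\in I_j$; I set $\sigma_l:=\rho_l|_V$ and $\tau_{jl}:=\rho_l|_{V_j}$, so that $\rho_l=\sigma_l+\sum_{j\in J}\tau_{jl}$. Fixing $j$ and applying the identity gives $\bigl((\vect{b}^1_j+\tau_{j1})h^i_j,\dots,(\vect{b}^n_j+\tau_{jn})h^i_j\bigr)\prec\Phi'_{ji}$ for all $i\in I_j$, where $\vect{b}^l_j:=(\vect{a}^l+\sigma_l)h_j$; since $\Phi'_j$ is an extensive conjunctive $\infty$-minor of $(\Phi'_{ji})_{i\in I_j}$ via $H_j$, we obtain $(\vect{b}^1_j,\dots,\vect{b}^n_j)\prec\Phi'_j$. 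As $j\in J$ was arbitrary and $\Phi'$ is an extensive conjunctive $\infty$-minor of $(\Phi'_j)_{j\in J}$ via $H$, the maps $\sigma_1,\dots,\sigma_n$ witness $\mtrx{M}\prec\Phi'$. Combining the two directions gives exactly that $(\Phi,\Phi')$ is a conjunctive $\infty$-minor of $(\Phi_{ji},\Phi'_{ji})_{j\in J,\,i\in I_j}$ via $K$. The only obstacle worth flagging is, as noted, the concatenation/piecewise-sum calculus behind the displayed identity — in particular, that the per-block Skolem maps $\tau_{jl}$ amalgamate into a single $\rho_l$ precisely because the composite-scheme hypothesis forces $V$ and the $V_j$ to be pairwise disjoint; once that is set up, both directions are direct unwindings of the definitions, first through each $H_j$ and then through $H$.
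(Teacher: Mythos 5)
Your proof is correct: the amalgamation of the per-block Skolem maps $\tau_{jl}$ with the $\sigma_l$ into a single map $\rho_l\colon U\to A$ (legitimate precisely because $V$ and the $V_j$ are pairwise disjoint), together with the concatenation identity $(\vect{a}^l+\rho_l)k^i_j=((\vect{a}^l+\sigma_l)h_j+\tau_{jl})h^i_j$, is exactly the standard argument. The paper itself omits the details, deferring to the analogous Claim~1 in Couceiro and Foldes~\cite{CF}, and your write-up is the intended adaptation of that argument to systems of pointed multisets, handling the restrictive and extensive halves with the correct quantifier bookkeeping.
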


\begin{corollary}
\label{cor:spmsmc}
Let $\cl{W} \subseteq \cl{W}_A$ be a set of finitary systems of pointed multisets, and let $\cl{W}^\infty$ be its conjunctive $\infty$-minor closure. If $\cl{W}$ is closed under formation of conjunctive minors, then $\cl{W}$ is the set of all finitary systems belonging to $\cl{W}^\infty$.
\end{corollary}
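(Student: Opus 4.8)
The plan is to prove Corollary~\ref{cor:spmsmc} directly from the preceding Lemma on composition of conjunctive $\infty$-minors, mirroring Couceiro and Foldes's Fact~1. One inclusion is immediate: every member of $\cl{W}$ is a conjunctive $\infty$-minor of the singleton family consisting of itself (via the identity scheme $H = (h)$ with $h \colon m \to m$ the identity map and empty indeterminate set), so $\cl{W}$ is contained in $\cl{W}^\infty$, and every member of $\cl{W}$ is trivially finitary. Thus $\cl{W}$ is contained in the set of finitary members of $\cl{W}^\infty$.

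For the reverse inclusion, suppose $(\Phi, \Phi')$ is a finitary system belonging to $\cl{W}^\infty$; say its arity is the finite ordinal $m$. By definition of $\cl{W}^\infty$, there is a nonempty family $(\Phi_j, \Phi'_j)_{j \in J}$ of members of $\cl{W}$ and a scheme $H = (h_j)_{j \in J}$, $h_j \colon n_j \to m \cup V$, of which $(\Phi, \Phi')$ is a conjunctive $\infty$-minor. A priori the source arities $n_j$ and the indeterminate set $V$ may be infinite; the task is to replace this $\infty$-minor presentation by an honest finitary conjunctive-minor presentation, after which closure of $\cl{W}$ under formation of conjunctive minors yields $(\Phi, \Phi') \in \cl{W}$. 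First I would invoke the Lemma to fold any iterated $\infty$-minor construction into a single one, so that without loss of generality $(\Phi, \Phi')$ is a conjunctive $\infty$-minor of a single family of members of $\cl{W}$ via a single scheme. The key reduction is then: since the target $m$ is finite, for each $j \in J$ the map $h_j \colon n_j \to m \cup V$ has image meeting $m$ in a finite set, and only finitely many indeterminates from $V$ can be ``relevant'' — but more to the point, the defining equivalence of a conjunctive minor quantifies over $m \times n$ matrices $\mtrx{M}$ with $n$ finite (the number of columns of a matrix is always finite in this paper), so for each fixed column count $n$ only finitely much data is probed. I would argue that each $\Phi_j$ appearing in the presentation can itself be replaced by a finitary system in $\cl{W}$ — using that $\cl{W}$ is closed under conjunctive minors (hence, via Lemma~\ref{lemma:all}-style reasoning with identification and dummy arguments, under passing to the finite ``relevant'' coordinates of each $h_j$) — and that only finitely many distinct such finitary systems arise, collapsing $J$ to a finite index set and each $n_j$ and $V$ to finite ordinals.

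The main obstacle I anticipate is precisely this finitization step: showing that an $\infty$-minor of members of a conjunctive-minor-closed $\cl{W}$, when the result is finitary, is already a finitary minor of members of $\cl{W}$. The clean way to handle it is to exploit associativity of the minor formation (the Lemma above) together with the observation that a conjunctive $\infty$-minor via a scheme $H = (h_j)_{j\in J}$ with finite target $m$ factors through its ``finite shadow'': for each $j$, restrict $h_j$ to a finite sub-ordinal of $n_j$ large enough to carry the finitely many coordinates in $m \cup V$ actually hit when $H$ is composed against the finitely-many-column matrices that witness membership, and note that restricting a system to finitely many of its coordinates is itself a simple (finitary) minor, hence stays in $\cl{W}$. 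Since the indeterminate set $V$ can likewise be trimmed to the finitely many symbols on which the Skolem maps are constrained, and distinct finite restrictions $h_j$ fall into finitely many isomorphism types, the family $(\Phi_j, \Phi'_j)_{j \in J}$ collapses to a finite subfamily of finitary members of $\cl{W}$ with a finitary scheme, and closure under formation of (finitary) conjunctive minors delivers $(\Phi, \Phi') \in \cl{W}$. This establishes that the finitary part of $\cl{W}^\infty$ is exactly $\cl{W}$.
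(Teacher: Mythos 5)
Your first inclusion and your appeal to the composition lemma are fine, but the part you flag as ``the main obstacle'' --- the finitization of the $\infty$-minor presentation --- is a non-issue, and the machinery you propose for it is where the proposal goes wrong. Since $\cl{W}$ is by hypothesis a set of \emph{finitary} systems, the source ordinals $n_j$ in any conjunctive $\infty$-minor presentation of $(\Phi, \Phi')$ over members of $\cl{W}$ are automatically finite: each $n_j$ is the arity of $(\Phi_j, \Phi'_j) \in \cl{W}$. Moreover, the paper's definition of a (finitary) minor formation scheme requires only the target $m$ and the sources $n_j$ to be positive integers; the indeterminate set $V$ is an arbitrary set of symbols and the index set $J$ an arbitrary nonempty family, in the finitary case just as in the $\infty$ case. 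Hence, as soon as $(\Phi, \Phi')$ is finitary (so $m$ is finite) and the $(\Phi_j, \Phi'_j)$ lie in $\cl{W}$ (so every $n_j$ is finite), the scheme witnessing the $\infty$-minor is already a minor formation scheme in the original sense, $(\Phi, \Phi')$ is a conjunctive minor of members of $\cl{W}$, and closure under formation of conjunctive minors gives $(\Phi, \Phi') \in \cl{W}$ at once. No trimming of $V$, no restriction of the maps $h_j$ to finite subordinals, and no replacement of the $\Phi_j$ by ``finitary'' systems (they already are finitary) is needed.

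Worse, the collapse of $J$ to a finite subfamily that you propose is not only unnecessary but unsound: dropping indices weakens the conjunction over $j \in J$, which preserves the restrictive-minor condition on the antecedents $\Phi_j$ but can destroy the extensive-minor condition on the consequents $\Phi'_j$, since there the conjunction sits in the hypothesis of the defining implication, so passing to a subfamily imposes a strictly stronger requirement that need not hold. Likewise, the observation that the restricted maps $h_j$ fall into finitely many types does not allow you to discard any index, because distinct systems $\Phi_j$ may be attached to identical maps. So the reverse inclusion, as you argue it, has a genuine gap; the repair is the one-line finiteness observation above, which is precisely the content of the corollary (and of Couceiro and Foldes's Fact~1 that it mirrors), with the preceding lemma serving only to justify that $\cl{W}^\infty$, defined by one-step $\infty$-minors, is indeed closed under repeated formation of conjunctive $\infty$-minors.
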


We are now ready to prove the key result needed in the proof of Theorem~\ref{thm:spms}.

\begin{lemma}
\label{lem:Kinfty}
Let $A$ be an arbitrary, possibly infinite nonempty set. Let $\cl{W} \subseteq \cl{W}_A$ be a locally closed set of finitary systems of pointed multisets that contains the binary equality system, the unary empty system, and all unary trivial systems of breadth $p \geq 0$, and is closed under formation of conjunctive minors, unions, quotients, and dividends. Let $\cl{W}^\infty$ be the conjunctive $\infty$-minor closure of $\cl{W}$. Let $(\Phi, \Phi') \in \cl{W}_A \setminus \cl{W}$ be finitary. Then there exists a function in $\cl{O}_A$ which preserves every system in $\cl{W}^\infty$ but does not preserve $(\Phi, \Phi')$.
\end{lemma}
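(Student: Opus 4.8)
The plan is to construct the separating function by a "compactness plus local closure" argument, in the style of Couceiro and Foldes. Since $\cl{W}$ is locally closed and $(\Phi,\Phi')\notin\cl{W}$, there is some finite breadth $p$ with $(\Phi,\Phi')^{(p)}\notin\cl{W}$; replacing $(\Phi,\Phi')$ by this restriction (which still fails to be in $\cl{W}$ and, by Lemma~\ref{lemma:breadth}, whose non-preservation suffices), I may assume $(\Phi,\Phi')$ has bounded breadth $p$. Next, using closure under dividends together with closure under quotients, I reduce to the case where $(\Phi,\Phi')$ is, modulo the trivial system $\Omega_m^{(p)}$, controlled by its quotients by multisets of cardinality $\geq p$: if every such quotient were in $\cl{W}$, then $(\Phi,\Phi')$ itself would be in $\cl{W}$ by closure under dividends, contradiction; so some quotient $(\Psi,\Psi'):=(\Phi,\Phi')/S$ with $\card{S}\ge p$ is itself not in $\cl{W}$, and iterating/bookkeeping this reduces the problem to producing a function that preserves $\cl{W}^\infty$ but fails on a system of the above shape. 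The upshot of these reductions is that it suffices to separate a single "offending" system whose antecedent and consequent are as small as local closure and the dividend condition allow.

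The core construction is then the following. Form the (possibly infinite) $\infty$-minor of $\cl{W}$ obtained by conjoining, over all members of $\cl{W}$ of all finite arities, a single large system $(\Theta,\Theta')\in\cl{W}^\infty$ of some infinite arity $\kappa$ whose rows enumerate an "all tuples over $A$ at once" configuration — concretely, take $\kappa$ to index all pairs consisting of a finite arity $n$ and an $n$-tuple over $A$, and glue the systems of $\cl{W}$ along the scheme that routes the coordinates appropriately (this is where Corollary~\ref{cor:spmsmc} and the composite-scheme lemma are used to guarantee $(\Theta,\Theta')$ lies in $\cl{W}^\infty$ and absorbs all of $\cl{W}$). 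Since $(\Phi,\Phi')\notin\cl{W}$ and $\cl{W}$ contains the binary equality system, the unary empty system, and all unary trivial systems of breadth $p$ and is closed under conjunctive minors, unions and quotients, the system $(\Phi,\Phi')$ cannot be a conjunctive minor of $(\Theta,\Theta')$ — otherwise closure would force $(\Phi,\Phi')\in\cl{W}$ via Lemmas~\ref{lemma:all}, \ref{lemma:unions}, \ref{lemma:quotients}, \ref{lemma:dividends}. This non-minor relationship yields, for each candidate matrix $\mtrx{M}\prec\Phi$, a witness showing no Skolem assignment into $A$ realizes the rows of $\mtrx{M}$ inside $(\Theta,\Theta')$ while keeping the consequent constraint; packaging these witnesses defines a finitary function $g\colon A^n\to A$ on the relevant rows (and extended arbitrarily elsewhere) that respects every constraint coming from $\cl{W}$, hence preserves every system of $\cl{W}^\infty$ by Lemma~\ref{lemma:cm}, but sends some matrix $\mtrx{M}\prec\Phi$ outside $\Phi'$.

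The last step is to verify the two halves of the conclusion: (a) $g$ preserves every $(\Xi,\Xi')\in\cl{W}^\infty$ — this follows because $(\Xi,\Xi')$ is a conjunctive $\infty$-minor of members of $\cl{W}$, each of which $g$ preserves by construction, and Lemma~\ref{lemma:cm} (in its $\infty$-minor form) propagates preservation; and (b) $g\not\vartriangleright(\Phi,\Phi')$, which is immediate from how $g$ was chosen to defeat the offending matrix. One must also check that $g$ is well-defined, i.e., that the finitely many row-constraints used to define it are mutually consistent; here is where the equality system and the trivial systems of breadth $p$ enter, guaranteeing that "free" rows impose no contradictory demand and that the bounded breadth keeps the consistency check finite.

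The main obstacle I expect is the bookkeeping in the construction of $(\Theta,\Theta')$ and the verification that $(\Phi,\Phi')$ fails to be a conjunctive minor of it: one must carefully choose the indeterminate sets and the routing maps $h_j$ so that (i) $(\Theta,\Theta')$ genuinely lies in $\cl{W}^\infty$ — invoking the composite-scheme lemma and Corollary~\ref{cor:spmsmc} — and (ii) the failure of the minor relation translates into a consistent pointwise definition of $g$ rather than merely a non-uniform one. In particular, reconciling the existential Skolem maps (which a priori depend on the matrix $\mtrx{M}$) into a single function defined on rows requires the bounded-breadth reduction from the first paragraph, so that only finitely many matrices and finitely many rows are in play for any given arity; this is the technical heart of the argument and mirrors the role of Claim~1/Fact~1 in~\cite{CF}.
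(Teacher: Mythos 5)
Your overall scaffolding (reduce to bounded breadth via local closure, build a big matrix listing all $n$-tuples, intersect systems of $\cl{W}^\infty$ containing it, read off a separating function from its rows) does echo the paper's strategy, but the technical heart of the argument is asserted rather than carried out, and that is a genuine gap. The paper does not obtain $g$ by ``packaging witnesses'' from the failure of a conjunctive-minor relation between $(\Phi,\Phi')$ and one huge system $(\Theta,\Theta')$; such a failure only produces, for a single matrix, the non-existence of Skolem maps, and it gives no coherent way to assign values of $g$. What actually makes the construction work is a chain of reductions and extremal choices that you omit: first, $(\Phi,\Phi')$ is replaced by a minimal nonmember of $\cl{W}$ with respect to identification of rows and with respect to quotients (this is what later forces the defining matrix to have pairwise distinct rows, so that $g$ is well defined -- your appeal to ``the equality system guaranteeing no contradictory demand'' is not an argument); second, one takes the largest member $(\Psi,\Psi')$ of $\cl{W}$ below $(\Phi,\Phi')$, a witness multiset $Q\in\Phi\setminus\Psi$, and the smallest member $(\Upsilon,\Upsilon')$ of $\cl{W}$ with $Q\in\Upsilon$, and proves $\Upsilon'\not\subseteq\Phi'$ (via the auxiliary system $(\Psi,\Psi')\cup(\Upsilon\cap\Phi,\Upsilon')$), which yields a tuple $\vect{s}$ with $(\vect{s},\varepsilon)\in\Upsilon'\setminus\Phi'$; third, one shows by a projection argument that $\vect{s}$ extends to a $\mu$-tuple $\vect{u}$ with $(\vect{u},\varepsilon)\in\Theta'$, and only then is $g$ defined by $g\mtrx{M}=\vect{u}$. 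Without this extremal analysis there is no candidate value for $g$ on the rows of the offending matrix, and no reason the values chosen for different matrices are compatible.

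Two further points would fail as written. Your breadth reduction misapplies closure under dividends: the dividend condition only applies when $\Omega_m^{(p)}\subseteq(\Phi,\Phi')$, so ``if every quotient were in $\cl{W}$ then $(\Phi,\Phi')\in\cl{W}$'' is not available in general; the paper instead descends to a quotient-minimal nonmember and uses dividends later, after adjoining $\Omega_m^{(1)}$ to form $(\hat\Phi,\hat\Phi')\in\cl{W}$. And your final consistency argument leans on finiteness (``only finitely many matrices and rows are in play''), which is false for infinite $A$: the defining matrix $\mtrx{M}$ has one row per $n$-tuple of $A$, and the whole reason for passing to $\cl{W}^\infty$ and infinite-arity systems is to handle exactly this; the verification that $g$ preserves every member of $\cl{W}^\infty$ is then done by a quotient-plus-simple-$\infty$-minor routing argument contradicting the minimality of $(\Theta,\Theta')$, not by a direct appeal to Lemma~\ref{lemma:cm}.
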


\begin{proof}
We shall construct a function $g$ that preserves all systems in $\cl{W}^\infty$ but does not preserve $(\Phi, \Phi')$.

Note that, by Corollary~\ref{cor:spmsmc}, $(\Phi, \Phi')$ cannot be in $\cl{W}^\infty$. Let $m$ be the arity of $(\Phi, \Phi')$. Since $\cl{W}$ is locally closed and $(\Phi, \Phi') \notin \cl{W}$, there is an integer $p$ such that $(\Phi, \Phi')^{(p)} := (\Phi, \Phi') \cap \Omega_m^{(p)} \notin \cl{W}$; let $n$ be the smallest such integer. By Lemma~\ref{lemma:breadth}, each function not preserving $(\Phi, \Phi')^{(n)}$ does not preserve $(\Phi, \Phi')$ either, so we can consider $(\Phi, \Phi')^{(n)}$ instead of $(\Phi, \Phi')$. Due to the minimality of $n$, the breadth of $(\Phi, \Phi')^{(n)}$ is $n$. Observe that $(\Phi, \Phi')$ is not the trivial system of breadth $n$ nor the empty system, because these are members of $\cl{W}$. Thus, $n \geq 1$.

We can assume that $(\Phi, \Phi')$ is a minimal nonmember of $\cl{W}$ with respect to identification of rows, i.e., every simple minor of $(\Phi, \Phi')$ obtained by identifying some rows of $(\Phi, \Phi')$ is a member of $\cl{W}$. If this is not the case, then we can identify some rows of $(\Phi, \Phi')$ to obtain a minimal nonmember $(\tilde\Phi, \tilde\Phi')$ of $\cl{W}$ and consider the cluster $(\tilde\Phi, \tilde\Phi')$ instead of $(\Phi, \Phi')$. Note that by Lemma~\ref{lemma:cm}, each function not preserving $(\tilde\Phi, \tilde\Phi')$ does not preserve $(\Phi, \Phi')$ either.

We can also assume that $(\Phi, \Phi')$ is a minimal nonmember of $\cl{W}$ with respect to taking quotients, i.e., whenever $S \neq \varepsilon$, we have that $(\Phi, \Phi') / S \in \cl{K}$. If this is not the case, then consider a minimal nonmember $(\Phi, \Phi') / S$ of $\cl{W}$ instead of $(\Phi, \Phi')$. By Lemma~\ref{lemma:quotients}, each function not preserving $(\Phi, \Phi') / S$ does not preserve $(\Phi, \Phi')$ either.

The fact that $(\Phi, \Phi')$ is a minimal nonmember of $\cl{W}$ with respect to taking quotients implies that $\Omega_m^{(1)} \not\subseteq (\Phi, \Phi')$. For, suppose, on the contrary, that $\Omega_m^{(1)} \subseteq (\Phi, \Phi')$. Since all quotients $(\Phi, \Phi') / S$ where $\card{S} \geq 1$ are in $\cl{W}$ and $\cl{W}$ is closed under dividends, we have that $(\Phi, \Phi') \in \cl{W}$, a contradiction.

Let $(\Psi, \Psi') := \bigcup \{(P, P') \in \cl{W} \mid (P, P') \subseteq (\Phi, \Phi')\}$, i.e., $(\Psi, \Psi')$ is the largest system in $\cl{W}$ such that $(\Psi, \Psi') \subseteq (\Phi, \Phi')$. Note that this is not the empty union, because the empty system is a member of $\cl{W}$. It is clear that $(\Psi, \Psi') \neq (\Phi, \Phi')$. Furthermore, $\Psi \subsetneq \Phi$, for if it were the case that $\Psi = \Phi$, then $(\Phi, \Phi')$ would be a conjunctive minor of $(\Psi, \Psi')$ by extending the consequent and hence $(\Phi, \Phi')$ would be a member of $\cl{W}$, a contradiction. Since $n$ was chosen to be the smallest integer satisfying $(\Phi, \Phi')^{(n)} \notin \cl{W}$, we have that $(\Phi, \Phi')^{(n-1)} \in \cl{W}$ and since $(\Phi, \Phi')^{(n-1)} \subseteq (\Phi, \Phi')^{(n)}$, it holds that $(\Phi, \Phi')^{(n-1)} \subseteq (\Psi, \Psi')$. Thus there is a multiset $Q \in \Phi \setminus \Psi$ with $\card{Q} = n$. Let $\mtrx{D} := (\vect{d}^1, \dotsc, \vect{d}^n)$ be an $m \times n$ matrix whose multiset of columns equals $Q$.

The rows of $\mtrx{D}$ are pairwise distinct. Suppose, for the sake of contradiction, that rows $i$ and $j$ of $\mtrx{D}$ coincide. Since $(\Phi, \Phi')$ is a minimal nonmember of $\cl{W}$ with respect to identification of rows, by identifying rows $i$ and $j$ of $(\Phi, \Phi')$, we obtain a system $(\tilde\Phi, \tilde\Phi')$ that is in $\cl{W}$. By adding a dummy row in the place of the row that got deleted when we identified rows $i$ and $j$, and finally by intersecting with the conjunctive minor of the binary equality cluster whose rows $i$ and $j$ are equal (the overall effect of the operations performed above is the selection of exactly those multisets in $\Phi$ and pointed multisets in $\Phi'$ whose rows $i$ and $j$ coincide), we obtain a system $(\bar\Phi, \bar\Phi') \in \cl{W}$ such that $Q \in \bar\Phi$ and $(\bar\Phi, \bar\Phi') \subseteq (\Phi, \Phi')$. But this is impossible by the choice of $Q$.

Let $(\hat\Phi, \hat\Phi') := (\Phi, \Phi') \cup \Omega_m^{(1)}$. We claim that for $S \neq \varepsilon$, $(\hat\Phi, \hat\Phi') / S = (\Phi, \Phi') / S$ or $(\hat\Phi, \hat\Phi') / S = (\Phi, \Phi') / S \cup \Omega_m^{(0)}$. For, by Lemma~\ref{lemma:quotdistr} we have
\[
(\hat\Phi, \hat\Phi') / S = ((\Phi, \Phi') \cup \Omega_m^{(1)}) / S = (\Phi, \Phi') / S \cup \Omega_m^{(1)} / S.
\]
If $\card{S} > 1$, then $\Omega_m^{(1)} / S = (\emptyset, \emptyset)$; hence $(\hat\Phi, \hat\Phi') = (\Phi, \Phi')$. If $\card{S} = 1$, then $\Omega_m^{(1)} / S = (\{\varepsilon\}, \emptyset) = \Omega_m^{(0)}$; hence $(\hat\Phi, \hat\Phi') / S = (\Phi, \Phi') / S \cup \Omega_m^{(0)}$.

Since $(\Phi, \Phi')$ is a minimal nonmember of $\cl{W}$ with respect to quotients, $\Omega_m^{(0)} \in \cl{W}$ and $\cl{W}$ is closed under unions, by the above claim we have that $(\hat\Phi, \hat\Phi') / S \in \cl{W}$ whenever $\card{S} \geq 1$. Since $\cl{W}$ is closed under dividends, we have that $(\hat\Phi, \hat\Phi') \in \cl{W}$.

Let $(\Upsilon, \Upsilon') := \bigcap \{(P, P') \in \cl{W} \mid Q \in P\}$, i.e., $(\Upsilon, \Upsilon')$ is the smallest system in $\cl{W}$ such that $Q \in \Upsilon$. Note that this is not the empty intersection, because $(\hat\Phi, \hat\Phi')$ is a member of $\cl{W}$, as shown above, and $Q \in \hat\Phi$; thus $(\Upsilon, \Upsilon') \subseteq (\hat\Phi, \hat\Phi')$.

We claim that $\Upsilon' \not\subseteq \Phi'$. Suppose, on the contrary, that $\Upsilon' \subseteq \Phi'$. Then we must have that $\Phi \not\subseteq \Upsilon$. For, if it were the case that $\Phi \subseteq \Upsilon$, then $(\Phi, \Phi')$ would be a conjuctive minor of $(\Upsilon, \Upsilon')$ (by restricting the antecedent and extending the consequent) and hence $(\Phi, \Phi')$ would be a member of $\cl{W}$, a contradiction. Consider
\[
(\Lambda, \Lambda') := (\Psi, \Psi') \cup (\Upsilon \cap \Phi, \Upsilon').
\]
Let us first verify that the pair $(\Upsilon \cap \Phi, \Upsilon')$ is actually a system of pointed multisets. Since $(\Upsilon, \Upsilon') \in \cl{W}_A$, it holds that $\Upsilon'$ is downward closed and $\{x\} \uplus S \in \Upsilon$ for every $(x, S) \in \Upsilon'$. By the assumption that $\Upsilon' \subseteq \Phi'$ and by the fact that $(\Phi, \Phi') \in \cl{W}_A$, it also holds that $\{x\} \uplus S \in \Phi$ for every $(x, S) \in \Upsilon'$. Indeed, $(\Upsilon \cap \Phi, \Upsilon') \in \cl{W}_A$ as claimed.

The system $(\Upsilon \cap \Phi, \Upsilon')$ is a conjunctive minor of $(\Upsilon, \Upsilon') \in \cl{W}$ (by restricting the antecedent) and hence it is a member of $\cl{W}$. Since $(\Psi, \Psi')$ is also a member of $\cl{W}$ and $\cl{W}$ is closed under unions, we have that $(\Lambda, \Lambda') \in \cl{W}$. We have that
\[
(\Psi, \Psi') \subsetneq (\Lambda, \Lambda') \subsetneq (\Phi, \Phi'),
\]
where the first inclusion clearly holds by the definition of $(\Lambda, \Lambda')$, and the inclusion is strict, because $Q \in \Upsilon \cap \Phi$ but $Q \notin \Psi$. The second inclusion holds, because $(\Psi, \Psi') \subseteq (\Phi, \Phi')$ by the definition of $(\Psi, \Psi')$, $\Upsilon \cap \Phi \subseteq \Phi$ by the definition of intersection and $\Upsilon' \subseteq \Phi'$ by our assumption. The second inclusion is strict, because $(\Lambda, \Lambda') \in \cl{W}$ but $(\Phi, \Phi') \notin \cl{W}$. We have reached a contradiction, because $(\Psi, \Psi')$ is by definition the largest member of $\cl{W}$ that is componentwise included in $(\Phi, \Phi')$. This completes the proof of the claim that $\Upsilon' \not\subseteq \Phi'$.

We have shown above that $\Upsilon' \subseteq \hat\Phi'$ but $\Upsilon' \not\subseteq \Phi'$. We conclude that there exists an $m$-tuple $\vect{s} \in A^m$ such that $(\vect{s}, \varepsilon) \in \Upsilon' \setminus \Phi'$.

Let $\mtrx{M} := (\vect{m}^1, \dotsc, \vect{m}^n)$ be a $\mu \times n$ matrix whose first $m$ rows are the rows of $\mtrx{D}$ (i.e., $\bigl( \vect{m}^1(i), \dotsc, \vect{m}^n(i) \bigr) = \bigl( \vect{d}^1(i), \dotsc, \vect{d}^n(i) \bigr)$ for every $i \in m$) and whose other rows are the remaining distinct $n$-tuples in $A^n$; every $n$-tuple in $A^n$ is a row of $\mtrx{M}$ and there is no repetition of rows in $\mtrx{M}$. Note that $m \leq \mu$ and $\mu$ is infinite if and only if $A$ is infinite.

Let $(\Theta, \Theta') := \bigcap \{(P, P') \in \cl{W}^\infty \mid \mtrx{M} \prec P\}$. There must exist a $\mu$-tuple $\vect{u} := (u_t \mid t \in \mu)$ in $A^\mu$ such that $\vect{u}(i) = \vect{s}(i)$ for all $i \in m$ and $(\vect{u}, \varepsilon) \in \Theta'$. For, suppose that this is not the case. Let $(\tilde\Theta, \tilde\Theta')$ be the projection of $(\Theta, \Theta')$ to its first $m$ coordinates. Then $(\tilde\Theta, \tilde\Theta') \in \cl{W}$ and $Q \in \tilde\Theta$ but $(\vect{s}, \varepsilon) \notin \tilde\Theta'$. This contradicts the choice of $\vect{s}$.

We can now define a function $g \colon A^n \to A$ by the rule $g \mtrx{M} = \vect{u}$. The definition is valid, because every $n$-tuple in $A^n$ occurs exactly once as a row of $\mtrx{M}$. It is clear that $g \not\vartriangleright (\Phi, \Phi')$, because $\mtrx{D} \prec \Phi$ but $g \mtrx{D} = \vect{s} \nprec \Phi'$.

We need to show that every system in $\cl{W}^\infty$ is preserved by $g$. Suppose, on the contrary, that there is a $\rho$-ary system $(\Phi_0, \Phi'_0) \in \cl{W}^\infty$, possibly infinitary, which is not preserved by $g$. Thus, for some $\rho \times n'$ matrix $\mtrx{N} := (\vect{c}^1, \dotsc, \vect{c}^{n'}) \prec \Phi_0$, with $\mtrx{N}_0 := (\vect{c}^1, \dotsc, \vect{c}^n)$, $\mtrx{N}_1 := (\vect{c}^{n+1}, \dotsc, \vect{c}^{n'})$, we have $[g \mtrx{N}_0 | \mtrx{N}_1] \nprec \Phi'_0$. Let $(\Phi_1, \Phi'_1) := (\Phi_0, \Phi'_0) / \mtrx{N}_1^*$. Since $\cl{W}$ is closed under quotients, $(\Phi_1, \Phi'_1) \in \cl{W}$. We have that $\mtrx{N}_0 \prec \Phi_1$ but $g \mtrx{N}_0 \nprec \Phi'_1$, so $g$ does not preserve $(\Phi_1, \Phi'_1)$ either. Define $h \colon \rho \to \mu$ to be any map such that
\[
\bigl( \vect{c}^1(i), \dotsc, \vect{c}^n(i) \bigr) = \bigl( (\vect{m}^1 h)(i), \dotsc, (\vect{m}^n h)(i) \bigr)
\]
for every $i \in \rho$, i.e., row $i$ of $\mtrx{N}_0$ is the same as row $h(i)$ of $\mtrx{M}$, for each $i \in \rho$. Let $(\Phi_h, \Phi'_h)$ be the $\mu$-ary simple $\infty$-minor of $(\Phi_1, \Phi'_1)$ via $H := \{h\}$. Note that $(\Phi_h, \Phi'_h) \in \cl{W}^\infty$.

We claim that $\mtrx{M} \prec \Phi_h$. To prove this, by the definition of simple $\infty$-minor, it is enough to show that $(\vect{m}^1 h, \dotsc, \vect{m}^n h) \linebreak[0] \prec \Phi_1$. In fact, we have for $1 \leq j \leq n$,
\[
\vect{m}^j h = (\vect{m}^j h(i) \mid i \in \rho) = (\vect{c}^j (i) \mid i \in \rho) = \vect{c}^j,
\]
and $(\vect{c}^1, \dotsc, \vect{c}^n) = \mtrx{N}_0 \prec \Phi_1$.

Next we claim that $(\vect{u}, \varepsilon) \notin \Phi'_h$. For this, by the definition of simple $\infty$-minor, it is enough to show that $\vect{u} h \nprec \Phi'_1$. For every $i \in \rho$, we have
\[
\begin{split}
(\vect{u} h)(i)
&= \bigl( g(\vect{m}^1, \dotsc, \vect{m}^n) h \bigr) (i) \\
&= g \bigl( (\vect{m}^1 h)(i), \dotsc, (\vect{m}^n h)(i) \bigr)
= g \bigl( \vect{c}^1(i), \dotsc, \vect{c}^n(i) \bigr).
\end{split}
\]
Thus $\vect{u} h = g \mtrx{N}_0$. Since $g \mtrx{N}_0 \nprec \Phi'_1$, we conclude that $(\vect{u}, \varepsilon) \notin \Phi'_h$.

Thus, $(\Phi_h, \Phi'_h) \in \cl{W}^\infty$, $\mtrx{M} \prec \Phi_h$ but $(\vect{u}, \varepsilon) \notin \Phi'_h$. By the choice of $\vect{u}$, this is impossible, and we have reached a contradiction.
\end{proof}

\begin{proof}[Proof of Theorem~\ref{thm:spms}]
$\text{(ii)} \implies \text{(i)}$: It is clear that every function preserves the equality, empty, and trivial systems. By Lemmas~\ref{lemma:cm},~\ref{lemma:unions},~\ref{lemma:quotients}, and~\ref{lemma:dividends}, $\cl{W}$ is closed under formation of conjunctive minors, unions, quotients, and dividends.

It remains to show that $\cl{W}$ is locally closed. Suppose on the contrary that there is a system $(\Phi, \Phi') \in \cl{W}_A \setminus \cl{W}$, say of arity $m$, such that $(\Phi, \Phi')^{(p)} = (\Phi, \Phi') \cap \Omega_m^{(p)} \in \cl{W}$ for all $p \geq 0$. By (ii), there is an operation $f \colon A^n \to A$ that preserves every system in $\cl{W}$ but does not preserve $(\Phi, \Phi')$. Thus, there is a $p \geq 0$ and an $m \times p$ matrix $\mtrx{M} := [\mtrx{M}_1 | \mtrx{M}_2] \prec \Phi$ such that $[f \mtrx{M}_1 | \mtrx{M}_2] \nprec \Phi'$. By our assumption, $(\Phi, \Phi')^{(p)} \in \cl{W}$, but we have that $[\mtrx{M}_1 | \mtrx{M}_2] \prec \Phi^{(p)}$ and $[f \mtrx{M}_1 | \mtrx{M}_2] \nprec \Phi'^{(p)}$, which is a contradiction to the fact that $f \vartriangleright (\Phi, \Phi')^{(p)}$.

\smallskip
$\text{(i)} \implies \text{(ii)}$:
By Lemma~\ref{lem:Kinfty}, for every system $(\Phi, \Phi') \in \cl{W}_A \setminus \cl{W}$, there is a function in $\cl{O}_A$ which preserves every system in $\cl{W}$ but does not preserve $(\Phi, \Phi')$. The set of these ``separating'' functions, for each $(\Phi, \Phi') \in \cl{W}_A \setminus \cl{W}$, characterizes $\cl{W}$.
\end{proof}


\section{The number of closed sets}

In this section, we will show that the closure system of the subalgebras of $(\mathcal{O}_A; \zeta, \tau, \nabla, \ast)$ is uncountable whenever $\card{A} \geq 2$. We will first recall basic notions related to terms and term operations, following the notation and terminology presented in~\cite{DW}.

For a natural number $n \geq 1$, let $X_n := \{x_1, \dots, x_n\}$ be a set of \emph{variables.} Let $\{f_i \mid i \in I\}$ be a set of \emph{operation symbols,} disjoint from the variables, and assign to each operation symbol $f_i$ a natural number $n_i$, called the \emph{arity} of $f_i$. The sequence $\tau := (n_i)_{i \in I}$ is called a \emph{type.} The \emph{$n$-ary terms} of type $\tau$ are defined in the following inductive way:
\begin{enumerate}[\indent (i)]
\item Every variable $x_i \in X_n$ is an $n$-ary term.

\item If $f_i$ is an $n_i$-ary operation symbol and $t_1, \dots, t_{n_i}$ are $n$-ary terms, then $f_i(t_1, \dots, t_{n_i})$ is an $n$-ary term.

\item The set $W_\tau(X_n)$ of all $n$-ary terms is the smallest set which contains the variables $x_1, \dots, x_n$ and which is closed under the finite application of (ii).
\end{enumerate}

Every $n$-ary term is also an $m$-ary term for every $m \geq n$.
Let $X := \bigcup_{n \geq 1} X_n = \{x_1, x_2, \dots\}$. We denote by $W_\tau(X)$ the set of all terms of type $\tau$ over the countably infinite alphabet $X$:
\[
W_\tau(X) := \bigcup_{n \geq 1} W_\tau(X_n).
\] 

A term is \emph{linear,} if it contains no multiple occurrences of the same variable.
We denote by $W_\tau^\mathrm{lin}(X_n)$ the set of all $n$-ary linear terms of type $\tau$ over the alphabet $X_n$, and we denote by $W_\tau^\mathrm{lin}(X)$ the set of all linear terms of type $\tau$ over $X$.

The number of occurrences of operation symbols in a term is called the \emph{complexity} of the term. 
A term $s$ is a \emph{subterm} of a term $t$ if $t = usv$ for some words $u$ and $v$.
The subterms of a linear term are linear.

Let $\mathcal{A} = (A; (f_i)_{i \in I})$ be an algebra of type $\tau$, i.e., each fundamental operation $f_i$ has arity $n_i$, and let $t$ be an $n$-ary term of type $\tau$ over $X$. The term $t$ induces an $n$-ary operation $t^\mathcal{A}$ on $A$ (see Definition~5.2.1 in~\cite{DW}). We call an operation induced by a linear term a \emph{linear term operation.} The set of all $n$-ary linear term operations of the algebra $\mathcal{A}$ is denoted by $W_\tau^\mathrm{lin}(X_n)^\mathcal{A}$, and the set of all finitary linear term operations of the algebra $\mathcal{A}$ is denoted by $W_\tau^\mathrm{lin}(X)^\mathcal{A}$.

For a set $F \subseteq \cl{O}_A$, the universe of the subalgebra of $(\cl{O}_A; \zeta, \tau, \nabla, \ast)$ generated by $F$ is denoted by $\langle F \rangle$. The following theorem shows that linear terms are related to subuniverses of $(\cl{O}_A; \zeta, \tau, \nabla, \ast)$ much in the same way as terms are related to clones.

\begin{theorem}
\label{thm:linear}
Let $\mathcal{A} = (A; (f_i^A)_{i \in I})$ be an algebra of type $\tau$, and let $W_\tau^\mathrm{lin}(X)$ be the set of all linear terms of type $\tau$ over $X$. Then $W_\tau^\mathrm{lin}(X)^{\mathcal{A}}$ is a subuniverse of $(\cl{O}_A; \zeta, \tau, \nabla, \ast)$ that contains all projections on $A$. Moreover, $W_\tau^\mathrm{lin}(X)^\mathcal{A} = \langle \{f_i \mid i \in I\} \cup \cl{E}_A \rangle$.
\end{theorem}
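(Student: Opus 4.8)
The plan is to prove the two assertions separately: first that $W_\tau^\mathrm{lin}(X)^{\mathcal{A}}$ is a subuniverse of $(\cl{O}_A; \zeta, \tau, \nabla, \ast)$ containing all projections, and then that it coincides with $\langle \{f_i^A \mid i \in I\} \cup \cl{E}_A \rangle$.

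For the first part, I would argue by structural induction on linear terms, but the cleaner route is to work at the syntactic level and transfer to operations. The projections $e_i^{n,A}$ are induced by the variables $x_i \in X_n$, which are linear terms, so $\cl{E}_A \subseteq W_\tau^\mathrm{lin}(X)^{\mathcal{A}}$. For closure, I would show that each of $\zeta$, $\tau$, $\nabla$, $\ast$ applied to linear term operations again yields a linear term operation. The key observation is that $\zeta$, $\tau$, $\nabla$ merely permute, pad, or re-index the variable slots of a term and hence preserve linearity (no variable is duplicated). For composition $\ast$: if $t$ is a linear $n$-ary term and $s$ a linear $m$-ary term, then $(t^{\mathcal{A}} \ast s^{\mathcal{A}})$ is induced by the term obtained from $t$ by substituting $s$ (with its variables suitably renamed to $x_1, \dots, x_m$) for the first variable $x_1$ of $t$ and shifting the remaining variables $x_2, \dots, x_n$ of $t$ to $x_{m+1}, \dots, x_{m+n-1}$. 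Since $t$ is linear, $x_1$ occurs exactly once, so the substitution introduces the variables of $s$ exactly once each; since the remaining variables of $t$ are distinct from one another and are shifted to a disjoint block of indices, and $s$ is itself linear, the resulting term has no repeated variable. Thus it is linear, and $t^{\mathcal{A}} \ast s^{\mathcal{A}}$ is a linear term operation. One must also check the definition makes sense for unary terms (where $\zeta$, $\tau$, $\nabla$ act trivially or pad), which is routine.

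For the second part, the inclusion $\langle \{f_i^A \mid i \in I\} \cup \cl{E}_A \rangle \subseteq W_\tau^\mathrm{lin}(X)^{\mathcal{A}}$ follows from the first part together with the observation that each $f_i^A$ is itself a linear term operation (induced by $f_i(x_1, \dots, x_{n_i})$, a linear term), so the generating set lies in $W_\tau^\mathrm{lin}(X)^{\mathcal{A}}$, which is a subuniverse. For the reverse inclusion, I would show by induction on the complexity of a linear term $t$ that $t^{\mathcal{A}} \in \langle \{f_i^A \mid i \in I\} \cup \cl{E}_A \rangle$. The base case is a variable, giving a projection. For the inductive step, $t = f_i(t_1, \dots, t_{n_i})$ with each $t_k$ linear of strictly smaller complexity, and crucially the variable sets of $t_1, \dots, t_{n_i}$ are pairwise disjoint (because $t$ is linear). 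By induction each $t_k^{\mathcal{A}}$ is in the generated subuniverse, and one builds $t^{\mathcal{A}}$ from $f_i^A$ and the $t_k^{\mathcal{A}}$ using iterated composition $\ast$ together with $\zeta$, $\tau$ to route variables and $\nabla$ to pad arities, exploiting precisely the disjointness of the variable blocks so that no identification of variables ($\Delta$) is ever needed.

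The main obstacle is the bookkeeping in this last induction: one must give an explicit recipe expressing $f_i^A(t_1^{\mathcal{A}}, \dots, t_{n_i}^{\mathcal{A}})$, where the $t_k^{\mathcal{A}}$ have various arities $p_1, \dots, p_{n_i}$ acting on disjoint variable blocks, as a $\zeta$-$\tau$-$\nabla$-$\ast$ combination, and verify the arities and the argument order match up. The cleanest way is to first prove an auxiliary claim: if $g$ is $k$-ary and $h_1, \dots, h_k$ are operations of arities $q_1, \dots, q_k$ all lying in an iterative algebra closed under $\zeta, \tau, \nabla, \ast$, then the "parallel superposition" $(x_1, \dots, x_{q_1 + \dots + q_k}) \mapsto g(h_1(x_1, \dots, x_{q_1}), h_2(x_{q_1+1}, \dots), \dots)$ also lies in it — this is obtained by repeatedly using $\ast$ to plug in one $h_j$ at a time and $\zeta$, $\nabla$ to cycle the next argument block into position. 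Applying this claim with $g = f_i^A$ and $h_k = t_k^{\mathcal{A}}$, and then using $\zeta$, $\tau$ once more to bring the variables into the canonical order dictated by the indices appearing in $t$ (which is possible without identification precisely because $t$ is linear, so the indices are distinct), completes the step. The verification that this auxiliary superposition claim holds in $(\cl{O}_A; \zeta, \tau, \nabla, \ast)$ is where the real work lies, but it is a standard Mal'cev-style manipulation.
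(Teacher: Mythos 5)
Your proposal is correct and follows essentially the same route as the paper: the same substitution arguments showing that $\zeta$, $\tau$, $\nabla$, $\ast$ preserve linear term operations, and the same induction on complexity in which the subterms' variable blocks are made disjoint, the parallel superposition $f_i^{\mathcal{A}}(u_1^{\mathcal{A}},\dots,u_{n_i}^{\mathcal{A}})$ is built by iterating $\zeta$ and $\ast$ (your auxiliary claim is exactly the explicit chain the paper writes out), and a final application of $\zeta$, $\tau$, $\nabla$ routes the variables to their original distinct indices.
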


\begin{proof}
Since for all $1 \leq i \leq n$, $x_i \in X_n$, we have $x_i^\mathcal{A} = e_i^{n,A} \in W_\tau^\mathrm{lin}(X)^\mathcal{A}$. Thus $W_\tau^\mathrm{lin}(X)^\mathcal{A}$ contains all projections on $A$. Let $f, g \in W_\tau^\mathrm{lin}(X)^\mathcal{A}$, say $f$ is $n$-ary, $g$ is $m$-ary. Then there exist linear terms $t \in W_\tau^\mathrm{lin}(X_n)$, $s \in W_\tau^\mathrm{lin}(X_m)$ such that $f^\mathcal{A} = f$, $s^\mathcal{A} = g$. Then
\begin{itemize}
\item $t(x_2, x_3, \dots, x_n, x_1) \in W_\tau^\mathrm{lin}(X_n)$ and $t(x_2, x_3, \dots, x_n, x_1)^\mathcal{A} = \zeta f$,
\item $t(x_2, x_1, x_3, \dots, x_n) \in W_\tau^\mathrm{lin}(X_n)$ and $t(x_2, x_1, x_3, \dots, x_n)^\mathcal{A} = \tau f$,
\item $t(x_2, \dots, x_{n+1}) \in W_\tau^\mathrm{lin}(X_{n+1})$ and $t(x_2, \dots, x_{n+1})^\mathcal{A} = \nabla f$,
\item $t(s, x_{m+1}, x_{m+2}, \dots, x_{m+n-1}) \in W_\tau^\mathrm{lin}(X_{m+n-1})$ and \newline $t(s, x_{m+1}, x_{m+2}, \dots, x_{m+n-1})^\mathcal{A} = f \ast g$.
\end{itemize}
Thus, $\zeta f, \tau f, \nabla f, f \ast g \in W_\tau^\mathrm{lin}(X)^{\mathcal{A}}$. Therefore, $W_\tau^\mathrm{lin}(X)^{\mathcal{A}}$ is a subuniverse of $(\cl{O}_A; \zeta, \tau, \nabla, \ast)$.

It is clear that $\{f_i^\mathcal{A} \mid i \in I\} \cup \cl{E}_A \subseteq W_\tau^\mathrm{lin}(X)^\mathcal{A}$, and so $\langle \{f_i^\mathcal{A} \mid i \in I\} \cup \cl{E}_A \rangle \subseteq W_\tau^\mathrm{lin}(X)^\mathcal{A}$. We will show the converse inclusion by induction on the complexity of a term $t$.
If $t = x_i \in X_n$, then $t^\mathcal{A} = e_i^{n,A} \in \langle \{f_i^\mathcal{A} \mid i \in I\} \cup \cl{E}_A \rangle$.
Otherwise $t = f_i(t_1, \dots, t_{n_i})$ is a linear term and $t^\mathcal{A} \in W_\tau^\mathrm{lin}(X)^\mathcal{A}$.
Then there exist numbers $m_1, \dots, m_{n_i} \geq 1$ and an injective map
\[
\sigma \colon \{(j,k) \in \nat \times \nat \mid 1 \leq j \leq n_i, \, 1 \leq k \leq m_j\} \to \{1, \dots, n\}
\]
such that the variables occurring in the linear term $t_j$ ($1 \leq j \leq n_i$) are precisely $x_{\sigma(j,1)}, \dots, x_{\sigma(j,m_j)}$. For $1 \leq j \leq n_i$, let $u_j$ be the $m_j$-ary term that is obtained by replacing the occurrence of $x_{\sigma(j,\ell)}$ by $x_\ell$ for each $1 \leq \ell \leq m_j$. Note that then we clearly have that
\[
t_j^\mathcal{A} = u_j^\mathcal{A}(e_{\sigma(j,1)}^{n,A}, \dots, e_{\sigma(j,m_j)}^{n,A}) = u_j(x_{\sigma(j,1)}, \dots, x_{\sigma(j,m_j)})^\mathcal{A}.
\]
It is clear that $f_i^\mathcal{A} \in \langle \{f_i^\mathcal{A} \mid i \in I\} \cup \cl{E}_A \rangle$, and by our induction hypothesis it also holds that $u_1^\mathcal{A}, \dots, u_{n_i}^\mathcal{A} \in \langle \{f_i^\mathcal{A} \mid i \in I\} \cup \cl{E}_A \rangle$. Then repeated applications of $\zeta$ and $\ast$ show that the functions
\begin{align*}
& \zeta f_i^\mathcal{A}, \\
& \zeta f_i^\mathcal{A} \ast u_{n_i}^\mathcal{A}, \\
\zeta( & \zeta f_i^\mathcal{A} \ast u_{n_i}^\mathcal{A}), \\
\zeta( & \zeta f_i^\mathcal{A} \ast u_{n_i}^\mathcal{A}) \ast u_{n_i - 1}^\mathcal{A}, \\
\zeta( \zeta( & \zeta f_i^\mathcal{A} \ast u_{n_i}^\mathcal{A}) \ast u_{n_i - 1}^\mathcal{A}), \\
\zeta( \zeta( & \zeta f_i^\mathcal{A} \ast u_{n_i}^\mathcal{A}) \ast u_{n_i - 1}^\mathcal{A}) \ast u_{n_i - 2}^\mathcal{A}, \\
& \qquad \vdots \\
\zeta( \zeta( \cdots ( \zeta( & \zeta f_i^\mathcal{A} \ast u_{n_i}^\mathcal{A}) \ast u_{n_i - 1}^\mathcal{A}) \ast \cdots ) \ast u_2^\mathcal{A} ) \ast u_1^\mathcal{A}
\end{align*}
are all in $\langle \{f_i^\mathcal{A} \mid i \in I\} \cup \cl{E}_A \rangle$. Note that
\begin{align*}
& \zeta f_i^\mathcal{A}(a_1, \dots, a_{n_i}) =
    f_i^\mathcal{A}(a_2, \dots, a_{n_i}, a_1), \\
& \zeta f_i^\mathcal{A} \ast u_{n_i}^\mathcal{A}(a_1, \dots, a_{m_{n_i} + n_i - 1}) = \\
&   \qquad f_i^\mathcal{A}(a_{m_{n_i} + 1}, \dots, a_{m_{n_i} + n_i - 1}, u_{n_i}^\mathcal{A}(a_1, \dots, a_{m_{n_i}})), \\
& \zeta( \zeta f_i^\mathcal{A} \ast u_{n_i}^\mathcal{A})(a_1, \dots, a_{m_{n_i} + n_i - 1}) = \\
&   \qquad f_i^\mathcal{A}(a_{m_{n_i} + 2}, \dots, a_{m_{n_i} + n_i - 1}, a_1, u_{n_i}^\mathcal{A}(a_2, \dots, a_{m_{n_i} + 1})), \\
& \zeta( \zeta f_i^\mathcal{A} \ast u_{n_i}^\mathcal{A}) \ast u_{n_i - 1}^\mathcal{A}(a_1, \dots, a_{m_{n_i - 1} + m_{n_i} + n_i - 2}) = \\
&   \qquad f_i^\mathcal{A}(a_{m_{n_i - 1} + m_{n_i} + 1}, \dots, a_{m_{n_i - 1} + m_{n_i} + n_i - 2}, \\ & \qquad\qquad u_{n_i - 1}^\mathcal{A}(a_1, \dots, a_{m_{n_i - 1}}), u_{n_i}^\mathcal{A}(a_{m_{n_i - 1} + 1}, \dots, a_{m_{n_i - 1} + m_{n_i}})), \\
& \qquad\qquad\qquad\qquad\qquad \vdots \\
& \zeta( \zeta( \cdots ( \zeta( \zeta f_i^\mathcal{A} \ast u_{n_i}^\mathcal{A}) \ast u_{n_i - 1}^\mathcal{A}) \ast \cdots ) \ast u_2^\mathcal{A} ) \ast u_1^\mathcal{A} (a_1, \dots, a_{m_1 + m_2 + \dots + m_{n_i}}) = \\
&   \qquad f_i^\mathcal{A}(u_1^\mathcal{A}(a_1, \dots, a_{m_1}), u_2^\mathcal{A}(a_{m_1 + 1}, \dots, a_{m_1 + m_2}), \dots, \\ & \qquad\qquad u_{n_i}^\mathcal{A}(a_{m_1 + m_2 + \dots + m_{n_i - 1} + 1}, \dots, a_{m_1 + m_2 + \dots + m_{n_i}})).
\end{align*}
Furthermore, repeated applications of $\zeta$, $\tau$ and $\nabla$ yield that the $n$-ary function $g$ given by
\[
g(a_1, \dots, a_n) =
f_i^\mathcal{A}(u_1^\mathcal{A}(a_{\sigma(1,1)}, \dots, a_{\sigma(1,m_1)}),
\dots, u_{n_i}^\mathcal{A}(a_{\sigma(n_i,1)}, \dots, a_{\sigma(n_i,m_{n_i})}))
\]
is in $\langle \{f_i^\mathcal{A} \mid i \in I\} \cup \cl{E}_A \rangle$. We clearly have that $t^\mathcal{A} = g$. Therefore $\{f_i^\mathcal{A} \mid i \in I\} \cup \cl{E}_A \supseteq W_\tau^\mathrm{lin}(X)^\mathcal{A}$, and the claimed equality holds.
\end{proof}

Assume that $0$ and $1$ are distinct elements of $A$. For each integer $n \geq 3$, define the function $\mu_n \colon A^n \to A$ by
\[
\mu_n(a_1, \dots, a_n) =
\begin{cases}
1 & \text{if $(a_1, \dots, a_n) \in \{0,1\}^n$ and} \\
  & \qquad \text{$\lvert \{i \in \{1, \dots, n\} : a_i = 1\} \rvert \in \{1, n - 1\}$,} \\
0 & \text{otherwise.}
\end{cases}
\]
In the particular case that $A = \{0,1\}$, the $\mu_n$ are the Boolean functions defined by Pippenger~\cite[Proposition~3.4]{Pippenger}. Observe that $\mu_n(0, \dots, 0) = 0$ for every $n \geq 3$.

\begin{lemma}
\label{lem:mus}
Let $I \subseteq \nat \setminus \{0, 1, 2\}$ and $k \in \nat \setminus \{0, 1, 2\}$. Then $\mu_k \in \langle \{\mu_i \mid i \in I\} \cup \cl{E}_A \rangle$ if and only if $k \in I$.
\end{lemma}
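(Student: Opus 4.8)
The forward implication is immediate: if $k \in I$ then $\mu_k$ is one of the generators. For the converse I would argue by contradiction: assume $k \notin I$ but $\mu_k \in \langle \{\mu_i \mid i \in I\} \cup \cl{E}_A \rangle$. The first step is to reduce to the Boolean case. Since every $\mu_i$ takes values only in $\{0,1\}$, the two-element set $\{0,1\} \subseteq A$ is a subuniverse of $\mathcal{A} := (A; (\mu_i)_{i \in I})$, so the restriction to Boolean arguments of any linear term operation of $\mathcal{A}$ is again a linear term operation of $(\{0,1\}; (\mu_i|_{\{0,1\}^i})_{i \in I})$. By Theorem~\ref{thm:linear} applied over $A$ and over $\{0,1\}$, it therefore suffices to prove the statement when $A = \{0,1\}$, in which case $\mu_n(\mathbf{a}) = 1$ exactly when the Hamming weight of $\mathbf{a}$ lies in $\{1, n-1\}$. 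So assume $A = \{0,1\}$.

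By Theorem~\ref{thm:linear} there is a linear term $t$ over the operation symbols indexed by $I$ with $t^{\mathcal{A}} = \mu_k$. A routine induction on the structure of a linear term $h$ over these symbols shows (a) $h(\mathbf{0}) = 0$ (using $\mu_n(0,\dots,0) = 0$ for $n \geq 3$) and (b) $h$ maps Boolean tuples to Boolean values. Moreover $\mu_k$ depends on each of its $k$ arguments (fix all but one argument to $1$ and vary the last one, making the weight jump between $k-1$ and $k$). Hence $t$ contains each of $x_1, \dots, x_k$ exactly once and is not a single variable, so $t = \mu_i(t_1, \dots, t_i)$ with $i \in I$ (thus $i \geq 3$), where the variable sets of $t_1, \dots, t_i$ partition $\{x_1, \dots, x_k\}$ into $i$ nonempty blocks; in particular $3 \leq i \leq k$.

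The heart of the argument is to show $i = k$. Evaluating $t^{\mathcal{A}} = \mu_k$ at the weight-one tuple $\mathbf{e}_\ell$: by (a) every block not containing $x_\ell$ receives $\mathbf{0}$ and contributes $0$, while the block containing $x_\ell$ contributes some value $v \in \{0,1\}$ by (b), so $t^{\mathcal{A}}(\mathbf{e}_\ell) = \mu_i(0,\dots,v,\dots,0) = v$; since $\mu_k(\mathbf{e}_\ell) = 1$ we get $v = 1$. Thus each $t_j^{\mathcal{A}}$ outputs $1$ on every weight-one tuple. Now suppose $i < k$ and take the tuple $\mathbf{x} \in \{0,1\}^k$ that places a single $1$ in each of the first $i-1$ blocks and $\mathbf{0}$ in the last block. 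The $i$ block-values are then $1,\dots,1,0$ (with $i-1$ ones), so $t^{\mathcal{A}}(\mathbf{x}) = \mu_i(1,\dots,1,0) = 1$ because this vector has weight $i-1$. But $\mathbf{x}$ itself has weight $i-1$, and from $3 \leq i \leq k-1$ we get $2 \leq i-1 \leq k-2$, so $i-1 \notin \{1,k-1\}$ and hence $\mu_k(\mathbf{x}) = 0$, contradicting $t^{\mathcal{A}} = \mu_k$. Therefore $i = k$, and since $i \in I$ this gives $k \in I$, the desired contradiction.

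The only genuinely delicate step is ruling out $i < k$: this is exactly where the presence of composition $\ast$ (absent from Pippenger's minor relation) could threaten the argument, and what makes it go through is the choice of the test tuple $\mathbf{x}$, which forces the outermost operation symbol of $t$ to have arity exactly $k$. The reduction to $\{0,1\}$, the inductive observations (a) and (b), and the bookkeeping distinguishing a term from its induced operation are all routine.
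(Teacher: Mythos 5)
Your proof is correct, but it attacks the witnessing linear term at a different place than the paper does. The paper fixes an \emph{innermost} compound subterm $p = f_\ell(x_{i_1}, \dots, x_{i_\ell})$, notes that $\ell < k$ because a $k$-ary linear term contains no operation symbols of arity exceeding $k$ and $k \notin I$, evaluates at a tuple $\vect{x}$ with exactly $\ell - 1$ ones among the positions $i_1, \dots, i_\ell$, and then propagates the value $1$ from $p$ up to the root by induction on the depth of the subterm (sibling subterms evaluate to $0$ because all the $\mu_n$ preserve $0$, and $\mu_m$ of a weight-one vector is $1$), obtaining $t^\mathcal{A}(\vect{x}) = 1$ while $\mu_k(\vect{x}) = 0$ since $1 < \ell - 1 < k - 1$. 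You instead analyse the \emph{outermost} symbol $\mu_i$: essential dependence of $\mu_k$ on all $k$ arguments forces the root's arguments to partition the variables into $i \leq k$ nonempty blocks, your weight-one evaluations force every block term to output $1$ on weight-one inputs, and your weight-$(i-1)$ test tuple then contradicts $t^\mathcal{A} = \mu_k$ unless $i = k$, which is impossible since $i \in I$ and $k \notin I$. Both arguments exploit the same features of the $\mu_n$ (preservation of $0$ and the special role of weights $1$ and $n-1$) and both end with a Boolean tuple of weight strictly between $1$ and $k - 1$; the paper's version dispenses with the essential-dependence step and your observations (a)--(b) in favour of a single induction on depth, while yours pinpoints the obstruction more sharply, namely that the outermost symbol would have to have arity exactly $k$. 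Your preliminary reduction to $A = \{0,1\}$ is sound but superfluous: all of your evaluations involve only tuples over $\{0,1\}$, which is exactly how the paper handles arbitrary $A$ directly.
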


\begin{proof}
If $k \in I$, then obviously $\mu_k \in \langle \{\mu_i \mid i \in I\} \cup \cl{E}_{\{0,1\}} \rangle$. Assume then that $k \notin I$. Let $\mathcal{A} = (A; (\mu_i)_{i \in I})$. By Theorem~\ref{thm:linear}, it is enough to show that there is no linear term $t$ of type $\tau := (i)_{i \in I}$ such that $t^\mathcal{A} = \mu_k$. Thus, let $t$ be a $k$-ary linear term of type $\tau$. It is clear that a $k$-ary linear term does not contain any operation symbols of arity greater than $k$, and since $k \notin I$, the operation symbols occurring in $t$ have arity less than $k$. If $t = x_j$, then $t^\mathcal{A} = e_j^{k,A}$, but clearly $\mu_k$ is not a projection. Otherwise, $t$ has a subterm $p$ of the form $f_\ell(x_{i_1}, \dots, x_{i_\ell})$, where $\ell < k$ and $i_1, \dots, i_\ell$ are pairwise distinct. Let $\vect{x} \in \{0,1\}^k$ be a $k$-tuple with $1$'s at exactly $\ell - 1$ positions among $i_1, \dots, i_\ell$ and $0$'s at all remaining positions.

\textit{Claim.} For every subterm $s$ of $t$ that contains $p$ as a subterm, $s^\mathcal{A}(\vect{x}) = 1$.

\textit{Proof of Claim.}
We first define the \emph{depth} $d(s,t)$ of a subterm $s$ in the linear term $t$ recursively as follows:
$d(t,t) = 0$; and if $s = f_i(t_1, \dots, t_{n_i})$ is a subterm of $t$ with $d(s,t) = d$, then $d(t_j,t) = d + 1$ for $1 \leq j \leq n_i$.

We proceed by induction on the depth $d$ of the subterm $p$ in $s$. If $d = 0$ then $s = p$, and so $s^\mathcal{A}(\vect{x}) = p^\mathcal{A}(\vect{x}) = \mu_\ell(\vect{x}) = 1$. Assume that the claim holds for $d = q$ for some $q \geq 0$. Let then $d = q + 1$. Then $s = f_m(t_1, \dots, t_m)$ for some $m < k$, and there is an $n \in \{1, \dots, m\}$ such that $p$ is a subterm of $t_i$ and the depth of $p$ in $t_i$ is $d$. By the induction hypothesis, $t_n^\mathcal{A}(\vect{x}) = 1$. Furthermore, we have that for all $p \neq n$,
\[
t_p^\mathcal{A}(\vect{x}) = t_p^\mathcal{A}(0, \dots, 0) = 0,
\]
where the first equality holds because the variables $x_{i_1}, \dots, x_{i_\ell}$ do not occur in $t_p$ since $t$ is a linear term; and the second equality holds because the fundamental operations of $\mathcal{A}$ preserve $0$, and hence so do all term operations on $\mathcal{A}$. Thus,
\[
s^\mathcal{A}(\vect{x})
= f_m^\mathcal{A}(t_1^\mathcal{A}(\vect{x}), \dots, t_m^\mathcal{A}(\vect{x}))
= \mu_m(0, \dots, 0, 1, 0, \dots, 0)
= 1,
\]
as claimed.
\qquad $\diamond$

By the Claim, we have in particular that $t^\mathcal{A}(\vect{x}) = 1$. However, since $3 \leq \ell < k$, we have that $1 < \ell - 1 < k - 1$; hence $\mu_k(\vect{x}) = 0$.
\end{proof}

\begin{theorem}
\label{thm:size}
Let $\card{A} \geq 2$.
\begin{enumerate}[\rm (i)]
\item The set of subalgebras of $(\cl{O}_A; \zeta, \tau, \nabla, \ast)$ containing all projections is uncountable.
\item The set of subalgebras of $(\cl{O}_A; \zeta, \tau, \nabla, \ast)$ is uncountable.
\end{enumerate}
\end{theorem}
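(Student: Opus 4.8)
The plan is to exhibit a family of $2^{\aleph_0}$ pairwise distinct subalgebras, indexed by the subsets of $\nat \setminus \{0,1,2\}$, using the operations $\mu_n$ introduced above. For each $I \subseteq \nat \setminus \{0,1,2\}$, let
\[
\cl{F}_I := \langle \{\mu_i \mid i \in I\} \cup \cl{E}_A \rangle
\]
be the universe of the subalgebra of $(\cl{O}_A; \zeta, \tau, \nabla, \ast)$ generated by the operations $\mu_i$, $i \in I$, together with all projections on $A$. By Theorem~\ref{thm:linear}, applied to the algebra $\mathcal{A} := (A; (\mu_i)_{i \in I})$, the set $\cl{F}_I$ is indeed a subalgebra of $(\cl{O}_A; \zeta, \tau, \nabla, \ast)$, and it contains all projections since $\cl{E}_A \subseteq \cl{F}_I$.

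Next I would show that the assignment $I \mapsto \cl{F}_I$ is injective, which is immediate from Lemma~\ref{lem:mus}: for every $k \in \nat \setminus \{0,1,2\}$ one has $\mu_k \in \cl{F}_I$ if and only if $k \in I$. Consequently, if $I \neq J$ then, choosing $k$ in the symmetric difference of $I$ and $J$, the operation $\mu_k$ lies in exactly one of $\cl{F}_I$, $\cl{F}_J$, whence $\cl{F}_I \neq \cl{F}_J$. Since the power set $\mathcal{P}(\nat \setminus \{0,1,2\})$ has cardinality $2^{\aleph_0}$, the set of subalgebras of $(\cl{O}_A; \zeta, \tau, \nabla, \ast)$ containing all projections has cardinality at least $2^{\aleph_0}$, in particular it is uncountable; this establishes part~(i).

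For part~(ii) it suffices to observe that every subalgebra containing all projections is, in particular, a subalgebra of $(\cl{O}_A; \zeta, \tau, \nabla, \ast)$, so the family $\{\cl{F}_I \mid I \subseteq \nat \setminus \{0,1,2\}\}$ already witnesses that the set of all subalgebras is uncountable. If one prefers a witnessing family of subalgebras that do not contain all projections, one can instead take $\cl{G}_I := \langle \{\mu_i \mid i \in I\} \rangle$, generated without adjoining the projections; since $\cl{G}_I \subseteq \cl{F}_I$, Lemma~\ref{lem:mus} still yields $\mu_k \in \cl{G}_I$ if and only if $k \in I$, so the $\cl{G}_I$ are again pairwise distinct.

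The substance of the argument has already been absorbed into Lemma~\ref{lem:mus} — the combinatorial verification that $\mu_k$ cannot be realised by any linear term built from $\{\mu_i \mid i \neq k\}$ and projections — and into Theorem~\ref{thm:linear}, which identifies generated subalgebras with sets of linear term operations. What remains is a routine cardinality count, so I expect no real obstacle here; the only point requiring a moment's care is confirming that each $\cl{F}_I$ is legitimately a subalgebra and contains all projections, which Theorem~\ref{thm:linear} supplies directly.
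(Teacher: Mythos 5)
Your argument is correct and follows the paper's own proof essentially verbatim: both derive from Lemma~\ref{lem:mus} that the map $I \mapsto \langle \{\mu_i \mid i \in I\} \cup \cl{E}_A \rangle$ is injective on $\mathcal{P}(\nat \setminus \{0,1,2\})$, which gives uncountably many subalgebras containing all projections, and part~(ii) follows immediately. Your extra remark about the subalgebras $\cl{G}_I$ generated without projections is a harmless (and correct) addition not needed for the statement.
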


\begin{proof}
(i) By Lemma~\ref{lem:mus}, if $I, J \subseteq \nat \setminus \{0, 1, 2\}$ and $I \neq J$, then $\langle \{\mu_i \mid i \in I\} \cup \cl{E}_A \rangle \neq \langle \{\mu_i \mid i \in J\} \cup \cl{E}_A \rangle$. Thus, there are uncountably many subalgebras of $(\cl{O}_A; \zeta, \tau, \nabla, \ast)$ containing all projections.

(ii) An immediate consequence of (i).
\end{proof}

\begin{table}
\begin{center}
\begin{tabular}{|c|p{2.75cm}|p{3.15cm}|p{4.4cm}|}
\hline
& \multicolumn{1}{|c|}{\textsc{algebra}} & \multicolumn{1}{c|}{\textsc{dual objects}} & \multicolumn{1}{c|}{\textsc{reference}} \\ \hline
& $(\cl{O}_A; \zeta, \tau, \Delta, \nabla, \ast)$ & & \\
1 & -- subalgebras \newline with projections \newline (clones) & relations $R$ & Geiger~\cite{Geiger}; Bodnar\v{c}uk, \newline Kalu\v{z}\-nin, Kotov, Romov~\cite{BKKR} (finite domains), \newline Szabó~\cite{Szabo}; Pöschel~\cite{Poschel} \newline (general) \\[1ex]
2 & -- all subalgebras & relation pairs $(R,R')$ with $R' \subseteq R$ & Harnau~\cite{Harnau1-3} (finite domains) \newline {} \\[1ex]
3 & $(\cl{O}_A; \zeta, \tau, \Delta, \nabla)$ & constraints $(R,S)$ & Pippenger~\cite{Pippenger} (finite \newline domains), \newline Couceiro, Foldes~\cite{CF} (general) \\[1ex]
4 & $(\cl{O}_A; \zeta, \tau, \nabla)$ & generalized \newline constraints $(\phi, S)$ & Hellerstein~\cite{Hellerstein} (finite \newline domains), \newline Lehtonen~\cite{Lehtonen} (general) \\[1ex]
  & $(\cl{O}_A; \zeta, \tau, \nabla, \ast)$ & & \\
5 & -- subalgebras \newline with projections & clusters $\Phi$ & Lehtonen~\cite{Lehtonen} (general) \\
6 & -- all subalgebras & systems of pointed \newline multisets $(\Phi, \Phi')$ & Theorems~\ref{thm:pac}, \ref{thm:spms} \\
\hline
\end{tabular}
\end{center}
\bigskip
\caption{Galois theories for function algebras.}
\label{table:summary}
\end{table}

Table~\ref{table:summary} summarizes the Galois connections that describe closure systems of subalgebras of various reducts of $(\cl{O}_A; \zeta, \tau, \Delta, \nabla, \ast)$ considered in the literature, up to our knowledge. It is well-known that the closure system of line~2 of Table~\ref{table:summary} (and, in particular, that of line~1) is countably infinite in the case $\card{A} = 2$ (see, e.g., \cite{Lau}). Pippenger~\cite{Pippenger} showed that the closure system of line~3 is uncountable whenever $\card{A} \geq 2$. By Theorem~\ref{thm:size}, this is also the case for the closure system of line~5. From this it follows that the closure systems of lines~4 and~6 are uncountable as well whenever $\card{A} \geq 2$.

Looking at possible directions for future work, we are inevitably drawn to consider the remaining reducts of $(\cl{O}_A; \zeta, \tau, \Delta, \nabla, \ast)$. This asks for analogous descriptions of the subalgebras of these reducts in terms of Galois connections and the sizes of the respective closure systems.

\subsection*{Acknowledgements}

The authors wish to express their gratitude to Ivo Rosenberg for suggesting the problem of finding a characterization of the closure system of subalgebras of $(\cl{O}_A; \zeta, \tau, \nabla, \ast)$.


\end{document}